\documentclass{amsart}
\usepackage{amsfonts,amssymb}
%\usepackage{fullpage,polish}}
%% Wide pages: should work for letter and A4 paper size
\oddsidemargin=0.1in \evensidemargin=0.1in \textwidth=6.4in
\headheight=.2in \headsep=0.1in \textheight=8.4in

\theoremstyle{plain}
\newtheorem{theorem}{Theorem}[section]
\newtheorem{proposition}[theorem]{Proposition}
\newtheorem{lemma}[theorem]{Lemma}
\newtheorem{corollary}[theorem]{Corollary}

\theoremstyle{definition}
\newtheorem{definition}[theorem]{Definition}

\newtheorem{remark}[theorem]{Remark}

\let\geq\geqslant
\let\leq\leqslant

\newcommand{\N}{\mathbb{N}}                   % natural numbers
\newcommand{\R}{\mathbb{R}}                   % real numbers
\newcommand{\C}{\mathcal{C}}
                   % field K

\newcommand{\sX}{\mathfrak{X}}                % stratification1
                 % stratification2
\newcommand{\W}{\mathcal{W}}                  % Whiteny condition
\newcommand{\Q}{\mathcal{Q}}
\newcommand{\WL}{\mathcal{WL}}
\newcommand{\T}{\mathcal{T}}

\begin{document}
\title{Definable triangulations with regularity conditions}
\author{Ma\l gorzata Czapla}
\address{Uniwersytet Jagiello\'nski, Instytut Matematyki, ul. \L ojasiewicza 6, 30-348 Krak\'ow, Poland}
\email{Malgorzata.Czapla@im.uj.edu.pl}
 %\thanks{Research was partially supported by the Natural Sciences and Engineering Research Council of Canada.}
\thanks{\textit{2000 Mathematics Subject Classification.} 14P05, 14P10, 32B20, 32B25}
\keywords{weakly Lipschitz mapping, definable triangulation, Whitney
(B) condition, Verdier conditions}

\begin{abstract}
We prove that every definable set has a
definable triangulation which is locally Lipschitz and weakly bi-Lipschitz
on the natural simplicial stratification of a simplicial
complex. We also distinguish a class $\T$\ of regularity conditions and give a universal construction of a definable triangulation with a $\T$\ condition
of a definable set. This class includes the Whitney (B) and the Verdier conditions.
\end{abstract}
\maketitle

\section*{Introduction}

It has been known for more than 40 years - since papers of Whitney \cite{Wh} and \L ojasiewicz \cite{L1} - that analytic and semianalytic subsets of euclidean
spaces admit stratifications with Whitney regularity conditions, a result later generalized to subanalytic (\cite{Hi1}, \cite{LSW}) and finally to subsets 
definable in any o-minimal structures on $\R$\ (\cite{TL1}, \cite{TL2}). Since \L ojasiewicz's paper \cite{L2}, it has also been known that semianalytic 
and subsequently, subanalytic (\cite{Ha}, \cite{Hi2}, \cite{L3})
 and definable in o-minimal structures (\cite{vdD}) sets are triangulable.

A challenging problem stated by \L ojasiewicz and Thom was to combine the both results, i.e. to construct a triangulation of semi(sub)analytic sets
which is a stratification with Whitney conditions. A main difficulty was that the construction of Whitney stratification was by downward induction on dimension
in contrast to the triangulation which goes by upward induction on dimension. It was not clear how to overcome this divergence.

A first positive solution to the problem was given by Masahiro Shiota \cite{Sh1}. In his eight-page article concerning semialgebraic case, he proposed
a solution based on a technique of controlled tube systems developped in his book \cite{Sh2}. However, his proof is difficult to understand.

In the present article we give a direct constructive solution to the problem based on the theory of weakly Lipschitz mappings \cite{Cz} and on Guillaume
Valette's description of Lipschitz structure of definable sets \cite{Val}. Our solution is general in the sense that it concerns an arbitrary o-minimal
structure on the ordered field of real numbers $\R$\ (or even on any real closed field) and, moreover, in the sense that we describe a class $\T$\ of
regularity conditions including the Whitney and the Verdier conditions, such that for any condition $\Q$\ from $\T$\ a definable triangulation
with $\Q$\ condition is possible.

Roughly speaking, our final result (Theorem \ref{trm: triangulation with Q}) is derived from existence of a definable, locally Lipschitz, weakly bi-Lipschitz triangulation
(Theorem \ref{trm: Weakly_lipschitz_triang}), which in some sense (see Theorem \ref{trm: Q invariance}) preserves regularity conditions. To construct such a triangulation,
we first use Guillaume Valette's theorem (Theorem \ref{trm: GV2 bilipsch homeom}), which reduces the general case to the one, to which the classical construction of
triangulation can be applied.

A natural setting for our results is the theory of o-minimal structures (or more generally geometric categories), as presented in \cite{vdD}, \cite{DM}.
In the whole paper the adjective \textit{definable} (i.e. definable subset, definable mapping) will refer to any fixed o-minimal structure on
the ordered field of real numbers $\mathbb{R}$\ (or, more generally, on a real closed field) .

\section{Weakly Lipschitz mappings}
\label{sec: Weakly -lipschitz }

In this section we recall a notion of a weakly Lipschitz mapping and list its important properties.

We denote by $|\cdot|$\ the euclidean norm of $\R^{n}$. In the whole paper $C^{q}$\ denotes the class of smoothness of a mapping, so
$q\in\N$\ or $q\in\{\infty, \omega\}$, $q\geq 1$, unless it is said differently.
Now we remind briefly a notion of a $C^{q}$\ stratification.

\begin{definition} Let $A$\ be a subset of $\R^{n}$.
\textit{A} $C^{q}$\ \textit{stratification} of the set $A$\ is a (locally) finite family
$\sX_{A}$\ of connected $C^{q}$\ submanifolds of $\R^{n}$\ (called \textit{strata}),
such that \vskip 1 mm
\begin{tabular}{l}
$1)$\ \ $A=\bigcup\sX_{A}$\ ;\\
$2)$\ \ if\ $\Gamma_{1}, \Gamma_{2}\in\sX_{A}$, $\Gamma_{1}\neq\Gamma_{2}$\ then $\Gamma_{1}\cap\Gamma_{2}=\emptyset$; \\
$3)$\ \ for each $\Gamma\in\sX_{A}$\ the set
$(\overline{\Gamma}\setminus\Gamma)\cap A$\ is a union of
 some strata from $\sX_{A}$\ of dimension $<\dim\Gamma$.\\
\end{tabular}
\vskip 1 mm We say that the stratification $\sX_{A}$\ is
\textit{compatible with a family of sets} $B_{i}\subset A$, $i\in I$, if every set $B_{i}$\ is
a union of some strata of $\sX_{A}$.
\end{definition}

Actually, we will be interested only in finite stratifications.

\begin{definition}
\label{def:weakly_lipschitz-map} Let $A$\ be a subset of $\R^{n}$\ and let $\sX_{A}$\ be a finite $C^{q}$\
stratification of the set $A$. Consider a mapping $f: A\longrightarrow \R^{m}$.
We say that $f$\ is \textit{weakly Lipschitz of class }$C^{q}$\ \textit{on the stratification}\ $\sX_{A}$,
\ if for each stratum $\Gamma\in\sX_{A}$\ the restriction $f|_{\Gamma}$\ is of class $C^{q}$\ and
for any point $a\in\Gamma$\ there exists a neighbourhood $U_{a}$\ of $a$\ such that the mapping
$$\psi: (\Gamma\cap U_{a})\times ((A\setminus \Gamma)\cap U_{a})\ni(x,y)\longmapsto \frac{|f(x)-f(y)|}{|x-y|}\in\R$$
is bounded.
\vskip 1 mm
\end{definition}

\begin{remark} For another equivalent description see \cite{Cz} Definition 2.1.
\end{remark}

\begin{remark} If $f: A\longrightarrow \R^{m}$\ is weakly Lipschitz on a stratification $\sX_{A}$\ of the set $A$, then
$f$\ is continuous on $A$.
\end{remark}

The weak lipschitzianity is a generalization of the Lipschitz condition. We have the following

\begin{proposition}\label{prop: Lip=>weak-Lip}
Let $f: A\longrightarrow \R^{m}$\ be a locally Lipschitz mapping.
Assume that the set $A$\ admits a $C^{q}$ stratification $\sX_{A}$\ such that for all strata $\Gamma\in\sX_{A}$\ the map $f|_{\Gamma}$\
is of class $C^{q}$. Then $f$\ is weakly Lipschitz of class $C^{q}$\ on the stratification $\sX_{A}$.
\end{proposition}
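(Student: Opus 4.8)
The plan is to notice that weak Lipschitzness of $f$ on $\sX_A$ (Definition \ref{def:weakly_lipschitz-map}) splits into two demands, and that both are essentially handed to us. The first demand --- that $f|_\Gamma$ be of class $C^q$ for every stratum $\Gamma$ --- is literally the hypothesis of the proposition, so it requires no argument. Hence the entire content of the statement is the boundedness, near each point of each stratum, of the difference quotient $\psi$.

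To establish that, I would fix a stratum $\Gamma\in\sX_A$ and a point $a\in\Gamma$. Local Lipschitzness of $f$ at $a$ provides a neighbourhood $U_a$ of $a$ in $\R^n$ and a constant $L=L_a>0$ such that $|f(x)-f(y)|\le L|x-y|$ for all $x,y\in A\cap U_a$. I claim this very $U_a$ serves in Definition \ref{def:weakly_lipschitz-map}. Indeed, take any $(x,y)\in(\Gamma\cap U_a)\times((A\setminus\Gamma)\cap U_a)$; then $x\in\Gamma$ while $y\notin\Gamma$, so by disjointness of strata (condition (2) in the definition of a stratification) we automatically have $x\neq y$, whence $|x-y|>0$ and the quotient $|f(x)-f(y)|/|x-y|$ is well defined and, by the Lipschitz estimate, bounded above by $L$. (If one of the factors $\Gamma\cap U_a$ or $(A\setminus\Gamma)\cap U_a$ happens to be empty, $\psi$ is bounded vacuously.) Thus $\psi$ is bounded on its domain, and since $\Gamma$ and $a$ were arbitrary, $f$ is weakly Lipschitz of class $C^q$ on $\sX_A$.

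There is really no obstacle here: the statement is a consistency check showing that the weakly Lipschitz class genuinely contains the (stratified $C^q$) locally Lipschitz maps. The only two points worth recording explicitly are that condition (2) of the stratification guarantees $x\neq y$, so one never divides by zero, and that the neighbourhood needed in the definition of weak Lipschitzness can be taken to be exactly the one coming from local Lipschitzness, with the same constant.
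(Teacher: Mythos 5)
Your proof is correct and is exactly the straightforward argument the paper has in mind (the paper omits the proof as immediate): the $C^q$ requirement is the hypothesis, and the neighbourhood and constant from local Lipschitzness at $a$ bound the quotient $\psi$ directly. Nothing more is needed.
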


By the $C^{q}$\ Cell Decomposition Theorem (see \cite{DM}), we have the following

\begin{corollary} Let $f: A\longrightarrow\R^{m}$\ be a definable locally Lipschitz mapping.
There exists a definable $C^{q}$\ stratification $\sX_{A}$\ of the set $A$,\ such that $f$\ is weakly Lipschitz of class $C^{q}$\ on $\sX_{A}$.
\end{corollary}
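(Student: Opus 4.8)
The plan is to reduce everything to Proposition~\ref{prop: Lip=>weak-Lip} by producing a single definable $C^{q}$ stratification of $A$ that is simultaneously adapted to $f$. First I would invoke the $C^{q}$ Cell Decomposition Theorem in the form adapted to a definable map: there is a finite partition of $\R^{n}$ into $C^{q}$ cells such that $A$ is a union of cells and, for every cell $C\subseteq A$, the restriction $f|_{C}$ is of class $C^{q}$ (one applies cell decomposition to the finitely many definable coordinate functions of $f$, or equivalently to the graph of $f$). This already yields a finite family of pairwise disjoint connected $C^{q}$ submanifolds whose union is $A$.

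The only point that needs care is the frontier condition $3)$ in the definition of a stratification, since the Cell Decomposition Theorem produces a partition, not \emph{a priori} a stratification. To secure it I would apply cell decomposition once more, this time to the family consisting of all cells $C$ of the first decomposition together with the sets $\overline{C}\setminus C$; the resulting decomposition is compatible with all these closures, hence is a genuine $C^{q}$ stratification $\sX_{A}$ of $A$. Each stratum $\Gamma\in\sX_{A}$ is contained in some cell $C$ of the first decomposition, so $f|_{\Gamma}$ is still of class $C^{q}$; thus $\sX_{A}$ is a definable $C^{q}$ stratification of $A$ with $f|_{\Gamma}\in C^{q}$ for every $\Gamma\in\sX_{A}$.

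Finally, $f$ is locally Lipschitz by hypothesis, so Proposition~\ref{prop: Lip=>weak-Lip} applies verbatim to $\sX_{A}$ and gives that $f$ is weakly Lipschitz of class $C^{q}$ on $\sX_{A}$, which is the assertion. I do not anticipate a genuine obstacle; the one place to stay attentive is exactly the refinement step — one must not forget to restore the frontier condition, while checking that passing to the smaller strata does not spoil the $C^{q}$-regularity of $f$.
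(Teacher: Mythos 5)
Your proof is essentially the paper's own argument, which is left implicit: apply the definable $C^{q}$ Cell Decomposition Theorem so that $f$ restricted to each cell of a decomposition partitioning $A$ is of class $C^{q}$, arrange the frontier condition, and conclude by Proposition~\ref{prop: Lip=>weak-Lip}. The only small caveat is that a single further refinement need not yet satisfy condition $3)$ of a stratification (the new cells again have frontiers); one iterates the refinement, which terminates since the frontiers have strictly smaller dimension --- a standard point that does not affect the argument.
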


\begin{remark}
Weakly Lipschitz mappings may not be locally Lipschitz (see \cite{Cz} Examples 2.6 and 2.7).
\end{remark}

Proofs of the following propositions are straightforward.

\begin{proposition}
\label{prop: substrat-preserv-WL} Let $A\subset\R^{n}$, $\sX_{A}$\ be a $C^{q}$\ stratification of the set $A$\ and
 $f: A\longrightarrow \R^{n}$\ be weakly Lipschitz of class $C^{q}$\ on $\sX_{A}$. Let $B\subset A$.
Then for any $C^{q}$\ stratification $\sX_{B}$\ of the set $B$, compatible with $\sX_{A}$, the mapping $f|_{B}$\ is weakly Lipschitz of
class $C^{q}$\ on the stratification $\sX_{B}$.
\end{proposition}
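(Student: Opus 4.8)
The plan is to check the two conditions of Definition~\ref{def:weakly_lipschitz-map} for the map $f|_B$ on the stratification $\sX_B$ directly, using throughout the fact that the compatibility of $\sX_B$ with $\sX_A$ means precisely that each stratum $\Delta\in\sX_B$ is contained in a unique stratum $\Gamma=\Gamma(\Delta)\in\sX_A$ (indeed $\Gamma\cap B$ is a union of strata of $\sX_B$, and the strata of $\sX_B$ partition $B$). The smoothness condition is immediate: fixing $\Delta\in\sX_B$ and the stratum $\Gamma\supset\Delta$ in $\sX_A$, the set $\Delta$ is a $C^q$ submanifold of $\R^n$ lying inside the $C^q$ submanifold $\Gamma$, hence is a $C^q$ submanifold of $\Gamma$, so the inclusion $\iota\colon\Delta\hookrightarrow\Gamma$ is $C^q$ and $(f|_B)|_\Delta=(f|_\Gamma)\circ\iota$ is of class $C^q$.

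For the boundedness condition, I would fix a point $a\in\Delta\in\sX_B$ and let $\Gamma\in\sX_A$ be the stratum containing $\Delta$. Applying the weak Lipschitz property of $f$ on $\sX_A$ at the point $a\in\Gamma$ gives a neighbourhood $U_1$ of $a$ and a constant $M$ with $|f(x)-f(y)|\le M\,|x-y|$ for all $x\in\Gamma\cap U_1$ and $y\in(A\setminus\Gamma)\cap U_1$. Separately, since $f|_\Gamma$ is $C^q$ (in particular $C^1$) on the $C^1$ submanifold $\Gamma$, it is Lipschitz for the ambient euclidean distance on some smaller neighbourhood: parametrising a piece of $\Gamma$ around $a$ by a $C^1$ embedding of an open subset of $\R^{\dim\Gamma}$ which may be taken bi-Lipschitz onto its image, and composing with $f$, one obtains a neighbourhood $U_2$ of $a$ and a constant $M'$ with $|f(x)-f(y)|\le M'\,|x-y|$ for all $x,y\in\Gamma\cap U_2$. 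Put $U_a=U_1\cap U_2$. Now take any $(x,y)\in(\Delta\cap U_a)\times((B\setminus\Delta)\cap U_a)$; then $x\ne y$ and, since $\Delta\subset\Gamma$, we have $x\in\Gamma\cap U_a$. If $y\notin\Gamma$, then $y\in(A\setminus\Gamma)\cap U_a$ because $B\subset A$, so $|f(x)-f(y)|/|x-y|\le M$; if $y\in\Gamma$, then $x,y\in\Gamma\cap U_a$, so $|f(x)-f(y)|/|x-y|\le M'$. Hence the associated function $\psi$ is bounded by $\max(M,M')$ on $(\Delta\cap U_a)\times((B\setminus\Delta)\cap U_a)$, which is exactly what Definition~\ref{def:weakly_lipschitz-map} requires.

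I expect the only point that is not pure bookkeeping to be the reduction, in the case $y\in\Gamma$, of the ambient difference quotient to the local Lipschitz behaviour of $f|_\Gamma$: the weak Lipschitz hypothesis on $\sX_A$ controls $|f(x)-f(y)|$ only when $x$ and $y$ lie in \emph{different} strata of $\sX_A$, so one genuinely needs the elementary observation that a $C^1$ map on a $C^1$ submanifold of $\R^n$ is locally Lipschitz with respect to the ambient distance (equivalently, that the intrinsic and ambient metrics are comparable on a small enough piece of the submanifold). Everything else is a matter of chasing the inclusions $\Delta\subset\Gamma$ and $B\subset A$ through Definition~\ref{def:weakly_lipschitz-map}.
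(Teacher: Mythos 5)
Your proof is correct, and since the paper gives no argument for this proposition (it is one of the statements declared ``straightforward''), yours is essentially the intended proof: reduce each stratum $\Delta\in\sX_{B}$ to the unique $\Gamma\in\sX_{A}$ containing it, use the weak Lipschitz bound at $a\in\Gamma$ when $y$ leaves $\Gamma$, and use the local (ambient) Lipschitzness of the $C^{q}$ map $f|_{\Gamma}$ when $y$ stays in $\Gamma$. You correctly identified and handled the only nontrivial point, namely that pairs $x,y$ lying in the same $\sX_{A}$-stratum are not controlled by the weak Lipschitz hypothesis and require the comparability of intrinsic and ambient distances on a small piece of $\Gamma$.
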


\begin{proposition}\label{prop: WL wyst sprawdzac na najwiekszych parach} Let $\Lambda$\ be a $C^{q}$\ submanifold of $\R^{n}$, $\dim\overline{\Lambda}\setminus\Lambda<\dim \Lambda$.
Let $\{\Gamma_{i}\}_{i\in I}$\ be a $C^{q}$\ stratification of $\overline{\Lambda}\setminus\Lambda$.
Let $f: \overline{\Lambda}\longrightarrow \R^{m}$\ be a continuous mapping such that for any $i\in I$\ the restriction $f|_{\Gamma_{i}}$\ is of class $C^{q}$. Then
$f$\ is weakly Lipschitz of class $C^{q}$\ on $\{\Lambda\}\cup\{\Gamma_{i}\}_{i\in I}$\ if and only if for any $i\in I$\ the mapping
 $f$\ is weakly Lipschitz of class $C^{q}$\ on $\{\Lambda,\Gamma_{i}\}$.
\end{proposition}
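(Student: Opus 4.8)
The plan is to check the two implications straight from Definition~\ref{def:weakly_lipschitz-map}. Note first that $\{\Lambda\}\cup\{\Gamma_i\}_{i\in I}$ is a $C^q$ stratification of $\overline{\Lambda}$ and that each $\{\Lambda,\Gamma_i\}$ is a $C^q$ stratification of $\Lambda\cup\Gamma_i$, so that all the statements are meaningful; moreover, in either direction the restrictions $f|_{\Lambda}$ and $f|_{\Gamma_i}$ are automatically of class $C^q$ (part of the hypothesis, and also part of weak lipschitzianity on the pairs), so only the boundedness of the difference quotients has to be verified. The implication ``$\Rightarrow$'' is immediate: it is the special case $B=\Lambda\cup\Gamma_i$ of Proposition~\ref{prop: substrat-preserv-WL}, and directly, a neighbourhood $U_a$ witnessing Definition~\ref{def:weakly_lipschitz-map} at $a$ for a stratum of $\{\Lambda\}\cup\{\Gamma_i\}$ witnesses it for the same stratum of the subfamily $\{\Lambda,\Gamma_i\}$, since $(\Lambda\cup\Gamma_i)\setminus\Lambda\subset\overline{\Lambda}\setminus\Lambda$ and $(\Lambda\cup\Gamma_i)\setminus\Gamma_i=\Lambda$.

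For ``$\Leftarrow$'' I would treat the two kinds of strata of $\{\Lambda\}\cup\{\Gamma_i\}$ separately. Take first a stratum $\Gamma_i$ and a point $a\in\Gamma_i$. Weak lipschitzianity of $f$ on $\{\Lambda,\Gamma_i\}$ provides an open neighbourhood $U$ of $a$ and a constant $M>0$ with $|f(x)-f(z)|\le M|x-z|$ for all $x\in\Gamma_i\cap U$ and $z\in\Lambda\cap U$. Now $\overline{\Lambda}\setminus\Gamma_i=\Lambda\cup\bigcup_{j\ne i}\Gamma_j$, and each $\Gamma_j\subset\overline{\Lambda}$, so any $y\in(\overline{\Lambda}\setminus\Gamma_i)\cap U$ is the limit of a sequence contained in $\Lambda\cap U$ (using that $U$ is open); applying the estimate along that sequence and passing to the limit, with the continuity of $f$ on $\overline{\Lambda}$, gives $|f(x)-f(y)|\le M|x-y|$. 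Hence the quotient of Definition~\ref{def:weakly_lipschitz-map} is bounded by $M$ on $(\Gamma_i\cap U)\times((\overline{\Lambda}\setminus\Gamma_i)\cap U)$.

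It remains to handle the stratum $\Lambda$ and a point $a\in\Lambda$. By local finiteness of $\{\Gamma_i\}$, pick a neighbourhood $V$ of $a$ meeting only those $\Gamma_i$ with $a\in\overline{\Gamma_i}$, say $\Gamma_{i_1},\dots,\Gamma_{i_r}$. For each $s=1,\dots,r$, weak lipschitzianity of $f$ on $\{\Lambda,\Gamma_{i_s}\}$ yields a neighbourhood $U_s$ of $a$ and a constant $M_s>0$ with $|f(x)-f(y)|\le M_s|x-y|$ whenever $x\in\Lambda\cap U_s$ and $y\in\Gamma_{i_s}\cap U_s$. Then $U:=V\cap U_1\cap\dots\cap U_r$ and $M:=\max\{M_1,\dots,M_r\}$ witness Definition~\ref{def:weakly_lipschitz-map} for the stratum $\Lambda$ at $a$, because every $y\in(\overline{\Lambda}\setminus\Lambda)\cap U$ lies in some $\Gamma_{i_s}\cap U_s$. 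Combined with the previous paragraph, this shows $f$ to be weakly Lipschitz of class $C^q$ on $\{\Lambda\}\cup\{\Gamma_i\}$. The only step that is genuinely an idea rather than bookkeeping is the limiting argument for the stratum $\Gamma_i$: it is what lets us control $f$ against the other frontier strata $\Gamma_j$ --- for which we are given nothing --- using only the control against the single large stratum $\Lambda$, together with the facts that $\Lambda$ accumulates onto every point of its frontier and that $f$ is continuous on all of $\overline{\Lambda}$.
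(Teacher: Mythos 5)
Your proof is correct, and since the paper itself gives no argument for this proposition (it is grouped among those whose ``proofs are straightforward'' and left to the reader), there is no authorial proof to diverge from. Your two directions are the natural ones: the implication ``$\Rightarrow$'' is indeed just Proposition~\ref{prop: substrat-preserv-WL} (or the trivial shrinking of the second factor in Definition~\ref{def:weakly_lipschitz-map}), and in ``$\Leftarrow$'' you correctly isolate the only real point, namely that boundedness of the quotient for a pair $(\Gamma_i,\Gamma_j)$, $j\neq i$, follows from the bound against $\Lambda$ by approximating a point of $\Gamma_j$ by points of $\Lambda$ (possible since $\Gamma_j\subset\overline{\Lambda}$ and the frontier carries no interior of $\overline\Lambda$ near it) and passing to the limit using the continuity of $f$ on $\overline{\Lambda}$, while the stratum $\Lambda$ is handled by intersecting the finitely many neighbourhoods supplied by the pairs $\{\Lambda,\Gamma_{i_s}\}$ meeting a small neighbourhood of the point.
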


\begin{proposition}\label{prop: comp-weak-lip-weak-lip} Let $f: A\longrightarrow \R^{p}$\ be a weakly Lipschitz $C^{q}$\ mapping\
on a $C^{q}$\ stratification $\sX_{A}$\ of a set $A\subset\R^{n}$\ and let $g: B\longrightarrow\R^{r}$\ be a weakly Lipschitz $C^{q}$\ mapping on a
$C^{q}$\ stratification $\sX_{B}$\ of a set $B\subset\R^{p}$. Assume that the image under $f$\ of each stratum from $\sX_{A}$\ is contained in some stratum
from $\sX_{B}$\ (in particular, $f(A)\subset B$). Then $g\circ f: A\longrightarrow \R^{r}$\ is a weakly Lipschitz $C^{q}$\ mapping on $\sX_{A}$.
\end{proposition}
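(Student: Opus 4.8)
The plan is to check the two requirements of Definition \ref{def:weakly_lipschitz-map} for the composite $h:=g\circ f$ on the stratification $\sX_A$. Fix a stratum $\Gamma\in\sX_A$ and let $\Delta\in\sX_B$ be the stratum with $f(\Gamma)\subset\Delta$ provided by the hypothesis. Since $f|_\Gamma$ is a $C^q$ map with image in the $C^q$ submanifold $\Delta$, it is $C^q$ as a map $\Gamma\to\Delta$, and hence $h|_\Gamma=(g|_\Delta)\circ(f|_\Gamma)$ is of class $C^q$. The remaining task is: for every $a\in\Gamma$, to produce a neighbourhood $U_a$ of $a$ on which the difference quotient of $h$ over $(\Gamma\cap U_a)\times((A\setminus\Gamma)\cap U_a)$ is bounded.

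The key preliminary step would be a local estimate for $g$ near $b:=f(a)\in\Delta$: I claim there are a neighbourhood $W$ of $b$ in $\R^p$ and a constant $M$ with $|g(u)-g(v)|\le M|u-v|$ for all $u\in\Delta\cap W$ and $v\in B\cap W$. For $v\in(B\setminus\Delta)\cap W$ this is exactly the weak Lipschitz property of $g$ at the point $b\in\Delta$, which furnishes such a neighbourhood and constant; for $v\in\Delta\cap W$ it follows from the fact that $g|_\Delta$, being $C^q$ on the $C^q$ manifold $\Delta$, is locally Lipschitz at $b$ with respect to the metric induced from $\R^p$ (pass to a $C^q$ chart around $b$). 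Intersecting the two neighbourhoods and taking the larger constant yields the claim.

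Next I would use the weak Lipschitz property of $f$ at $a$, together with the continuity of $f$ (weakly Lipschitz maps being continuous), to choose $U_a$ so small that $|f(x)-f(y)|\le L|x-y|$ whenever $x\in\Gamma\cap U_a$ and $y\in(A\setminus\Gamma)\cap U_a$, and so that $f(A\cap U_a)\subset W$. For such $x,y$ one has $f(x)\in f(\Gamma)\subset\Delta$ and $f(x),f(y)\in W$; if $f(x)=f(y)$ then $h(x)=h(y)$ and the quotient vanishes, while otherwise
$$\frac{|h(x)-h(y)|}{|x-y|}=\frac{|g(f(x))-g(f(y))|}{|f(x)-f(y)|}\cdot\frac{|f(x)-f(y)|}{|x-y|}\le ML.$$
This gives the desired bound, completing the verification of both requirements.

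The hard part is essentially the preliminary estimate for $g$: one must notice that the weak Lipschitz hypothesis on $g$ only controls pairs $(f(x),f(y))$ with $f(y)$ lying in a stratum of $\sX_B$ distinct from $\Delta$, so that pairs with $f(y)\in\Delta$ have to be absorbed separately into the local Lipschitzness of $g|_\Delta$; and that continuity of $f$ is what lets one transfer ``pairs near $a$ in $A$'' to ``pairs near $b$ in $B$''. Once these observations are in place, the argument reduces to the factorisation of the difference quotient displayed above.
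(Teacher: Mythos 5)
Your argument is correct: the paper gives no proof of this proposition (it is listed among those whose ``proofs are straightforward''), and your factorisation of the difference quotient through $f$, with the preliminary two-case estimate for $g$ near $f(a)$ (weak lipschitzianity of $g$ for pairs leaving $\Delta$, local lipschitzianity of the $C^{q}$ map $g|_{\Delta}$ with respect to the ambient metric for pairs staying in $\Delta$, plus continuity of $f$ to localise), is exactly the intended straightforward argument. No gaps.
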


\begin{proposition}\label{prop: iloczyn WL jest WL} Let $f: A\longrightarrow \R^{p}$\ be a weakly Lipschitz $C^{q}$\ mapping\
on a $C^{q}$\ stratification $\sX_{A}$\ of a set $A\subset\R^{n}$\ and let $g: B\longrightarrow\R^{r}$\ be a weakly Lipschitz $C^{q}$\ mapping on a
$C^{q}$\ stratification $\sX_{B}$\ of a set $B\subset\R^{m}$. Then $f\times g: A\times B \longrightarrow \R^{p}\times\R^{r}$\ is weakly
Lipschitz of class $C^{q}$\ on a $C^{q}$\ stratification $\sX_{A\times B}=\{\Lambda'\times\Lambda'': \Lambda'\in \sX_{A}, \Lambda''\in\sX_{B}\}$.
\end{proposition}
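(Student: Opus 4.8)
The plan is to reduce the assertion to Proposition~\ref{prop: comp-weak-lip-weak-lip} and to the elementary behaviour of the Euclidean norm on a product, so that no new analytic estimate is required. First I would verify that $\sX_{A\times B}$\ is indeed a finite $C^{q}$\ stratification of $A\times B$: it is a finite family, each $\Lambda'\times\Lambda''$\ is a connected $C^{q}$\ submanifold of $\R^{n}\times\R^{m}=\R^{n+m}$, their union is $A\times B$, and two distinct product strata are disjoint because $(\Lambda'_{1}\times\Lambda''_{1})\cap(\Lambda'_{2}\times\Lambda''_{2})=(\Lambda'_{1}\cap\Lambda'_{2})\times(\Lambda''_{1}\cap\Lambda''_{2})$. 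Only the frontier condition needs a short argument. Using $\overline{\Lambda'\times\Lambda''}=\overline{\Lambda'}\times\overline{\Lambda''}$, a point of $\big(\overline{\Lambda'\times\Lambda''}\setminus(\Lambda'\times\Lambda'')\big)\cap(A\times B)$\ is a pair $(x,u)$\ with $x\in\overline{\Lambda'}\cap A$, $u\in\overline{\Lambda''}\cap B$, not both $x\in\Lambda'$\ and $u\in\Lambda''$; by the frontier condition for $\sX_{A}$\ such an $x$\ lies in a stratum $\Theta'\in\sX_{A}$\ that is either $\Lambda'$\ or satisfies $\dim\Theta'<\dim\Lambda'$, and similarly $u\in\Theta''\in\sX_{B}$. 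Since $\Theta'=\Lambda'$\ and $\Theta''=\Lambda''$\ cannot hold at once, $(x,u)$\ lies in the product stratum $\Theta'\times\Theta''$, whose dimension $\dim\Theta'+\dim\Theta''$\ is strictly less than $\dim(\Lambda'\times\Lambda'')=\dim\Lambda'+\dim\Lambda''$; conversely, each such $\Theta'\times\Theta''$\ is contained in $\overline{\Lambda'}\times\overline{\Lambda''}$, disjoint from $\Lambda'\times\Lambda''$, and contained in $A\times B$. So condition $3)$\ holds.

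For the weak lipschitzianity I would avoid a direct estimate and use the factorisation $f\times g=(f\circ p_{A},\,g\circ p_{B})$, where $p_{A}\colon A\times B\to A$\ and $p_{B}\colon A\times B\to B$\ are the two canonical projections. Each projection is linear, hence of class $C^{q}$\ on every stratum, it carries the stratum $\Lambda'\times\Lambda''$\ onto the stratum $\Lambda'$\ (resp.\ $\Lambda''$), and it is weakly Lipschitz of class $C^{q}$\ on $\sX_{A\times B}$\ --- in fact globally, with constant $1$ --- since $|p_{A}(x,u)-p_{A}(y,v)|=|x-y|\leq|(x,u)-(y,v)|$. Proposition~\ref{prop: comp-weak-lip-weak-lip}, applied with $p_{A}$\ (resp.\ $p_{B}$) as the inner map, then gives that $f\circ p_{A}\colon A\times B\to\R^{p}$\ and $g\circ p_{B}\colon A\times B\to\R^{r}$\ are weakly Lipschitz of class $C^{q}$\ on $\sX_{A\times B}$.

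It remains to observe that a pair of two weakly Lipschitz $C^{q}$\ mappings defined on one and the same $C^{q}$\ stratification is again weakly Lipschitz of class $C^{q}$\ on that stratification. Indeed, if $h_{1}\colon C\to\R^{p}$\ and $h_{2}\colon C\to\R^{r}$\ are weakly Lipschitz $C^{q}$\ on a stratification $\sX_{C}$, then $(h_{1},h_{2})|_{\Gamma}$\ is of class $C^{q}$\ for every $\Gamma\in\sX_{C}$, and for a point $a\in\Gamma$\ one takes $U_{a}$\ to be the intersection of the two neighbourhoods furnished by Definition~\ref{def:weakly_lipschitz-map} for $h_{1}$\ and $h_{2}$; on $(\Gamma\cap U_{a})\times((C\setminus\Gamma)\cap U_{a})$\ the identity $|(\xi_{1},\xi_{2})|^{2}=|\xi_{1}|^{2}+|\xi_{2}|^{2}$\ shows that the difference quotient of $(h_{1},h_{2})$\ is bounded by $\sqrt{M_{1}^{2}+M_{2}^{2}}$, where $M_{i}$\ bounds that of $h_{i}$. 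Taking $h_{1}=f\circ p_{A}$\ and $h_{2}=g\circ p_{B}$\ and recalling $f\times g=(f\circ p_{A},g\circ p_{B})$\ then concludes the proof.

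I do not expect a genuine obstacle: with the factorisation in hand the argument is purely organisational, and the only spot asking for a little care is the bookkeeping in the frontier condition, where one must intersect with $A\times B$\ and rule out $x\in\Lambda'$, $u\in\Lambda''$. A completely direct proof is of course also possible --- take the neighbourhood $U_{a}\times V_{b}$\ of $(a,b)\in\Lambda'\times\Lambda''$\ (where $U_{a},V_{b}$\ come from the weak lipschitzianity of $f$\ at $a$\ and of $g$\ at $b$) and split according to whether $y\notin\Lambda'$, $v\notin\Lambda''$, or exactly one of the two --- but in the ``mixed'' case, say $y\in\Lambda'$\ and $v\notin\Lambda''$, one must additionally use that $f|_{\Lambda'}$, being of class $C^{1}$, is locally Lipschitz near $a$\ (after shrinking $U_{a}$\ if necessary). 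The factorisation route is preferable precisely because this point is already hidden inside Proposition~\ref{prop: comp-weak-lip-weak-lip}.
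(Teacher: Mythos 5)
Your argument is correct. The paper itself offers no written proof of this proposition (it is one of those dismissed as ``straightforward'', with the intended argument presumably the direct estimate you sketch at the end), so your route is necessarily your own: you first check that $\sX_{A\times B}$ is a finite $C^{q}$ stratification --- the frontier bookkeeping via $\overline{\Lambda'\times\Lambda''}=\overline{\Lambda'}\times\overline{\Lambda''}$ is done carefully and is the only place needing care --- and then you factor $f\times g=(f\circ p_{A},\,g\circ p_{B})$, observe that the projections are globally $1$-Lipschitz and $C^{q}$ on each product stratum (so weakly Lipschitz on $\sX_{A\times B}$, in the spirit of Proposition~\ref{prop: Lip=>weak-Lip}), map each product stratum onto a stratum of $\sX_{A}$ (resp.\ $\sX_{B}$), and invoke Proposition~\ref{prop: comp-weak-lip-weak-lip}; the final observation that a pair of weakly Lipschitz maps on one stratification is weakly Lipschitz, with bound $\sqrt{M_{1}^{2}+M_{2}^{2}}$, is immediate. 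This buys you a purely organisational proof in which the delicate ``mixed'' case ($y\in\Lambda'$, $v\notin\Lambda''$), which in the direct approach forces you to use local Lipschitzness of $f|_{\Lambda'}$ coming from its $C^{1}$ smoothness, is absorbed into the composition proposition; the direct estimate is more self-contained but needs exactly that extra step, as you correctly note. Since Proposition~\ref{prop: comp-weak-lip-weak-lip} precedes the statement and does not depend on it, there is no circularity, and I see no gap.
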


\begin{definition} Let $A\subset\R^{n}$. For a homeomorphic embedding $f: A\longrightarrow \R^{m}$\ and a $C^{q}$\ stratification $\sX_{A}$\ of $A$\ such that
for any $\Gamma\in\sX_{A}$\ the mapping $f|_{\Gamma}$\ is a $C^{q}$\ embedding, we have a natural $C^{q}$\ stratification of the image $f(A)$\
$$f\sX_{A}=\{f(\Gamma):\quad \Gamma\in\sX_{A}\}.$$
\end{definition}

This leads to the following definition of a weakly bi-Lipschitz homeomorphism:

\begin{definition} Let $A\subset \R^{n}$\ be a set, $f:A\longrightarrow \R^{m}$\ be a homeomorphic embedding. Let
$\sX_{A}$\ be a $C^{q}$\ stratification ($q\geq 1$)\ of $A$\ such that for each $\Gamma\in\sX_{A}$\ the mapping $f|_{\Gamma}$\ is a
$C^{q}$\ embedding.

We say that the mapping $f$\ is \textit{weakly} \textit{bi-Lipschitz of class} $C^{q}$\ \textit{ on the stratification }$\sX_{A}$,\ if $f$\
is weakly Lipschitz of class $C^{q}$\ on $\sX_{A}$\ and its inverse $f^{-1}: f(A)\longrightarrow A\subset\R^{n}$\ is weakly Lipschitz of class $C^{q}$\ on the
stratification $f\sX_{A}$.
\end{definition}

In order to check that the inverse mapping is weakly Lipschitz on $f\sX_{A}$, we can use the following proposition (see \cite{Cz} Prop. 2.14).

\begin{proposition}
\label{prop: weak-lip-condition-for-inverse-mapping}
Let $A\subset \R^{n}$, $f: A\longrightarrow \R^{m}$\ be a homeomorphic embedding. Let $\sX_{A}$\ be a
$C^{q}$\ stratification of $A$\ and assume that for each stratum $\Gamma\in\sX_{A}$\ the mapping $f|_{\Gamma}$\ is
a $C^{q}$\ embedding.
Then the mapping $f^{-1}: f(A)\longrightarrow A$\ is weakly Lipschitz of class $C^{q}$\ on the stratification $f\sX_{A}$,\
if and only if for any strata $\Gamma, \Lambda\in \sX_{A}$\ such that $\Gamma\subset\overline{\Lambda}\setminus\Lambda$\ and for any point $c\in\Gamma$,
if $\{a_{\nu}\}_{\nu\in\N}$, $\{b_\nu\}_{\nu\in\N}$\ are arbitrary sequences such that
$a_{\nu}\in \Gamma, b_{\nu}\in\Lambda$\ for $\nu\in\N$, then
$$a_{\nu}, b_{\nu}\longrightarrow c \quad(\nu\longrightarrow +\infty)\quad \Longrightarrow \quad
\liminf_{\nu\longrightarrow +\infty}\frac{|f(a_{\nu})-f(b_{\nu})|}{|a_{\nu}-b_{\nu}|}> 0. $$
\end{proposition}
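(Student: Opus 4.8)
The plan is to reduce the ``weakly Lipschitz'' condition on $f^{-1}$ directly to its definition, and then to recognise that the quantitative lower bound appearing in the statement is exactly what fails to hold when the definition fails. First I would unwind Definition~\ref{def:weakly_lipschitz-map} applied to $f^{-1}$ on the stratification $f\sX_{A}$: it says that for every stratum $f(\Lambda)\in f\sX_{A}$ and every point $f(c)\in f(\Lambda)$ there is a neighbourhood $V$ of $f(c)$ such that the function $(u,v)\mapsto |f^{-1}(u)-f^{-1}(v)|/|u-v|$ is bounded on $(f(\Lambda)\cap V)\times((f(A)\setminus f(\Lambda))\cap V)$. Substituting $u=f(b)$, $v=f(a)$ and using that $f$ is a homeomorphism, this is equivalent to saying: for every $\Lambda\in\sX_{A}$ and every $c\in\Lambda$ there is a neighbourhood $U$ of $c$ such that $|a-b|/|f(a)-f(b)|$ is bounded for $a\in\Lambda\cap U$, $b\in(A\setminus\Lambda)\cap U$; equivalently that $|f(a)-f(b)|/|a-b|$ is bounded \emph{away from zero} on that set.

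Next I would observe that it suffices to check this boundedness-away-from-zero condition only on pairs $(\Gamma,\Lambda)$ with $\Gamma\subset\overline{\Lambda}\setminus\Lambda$, and only for $c\in\Gamma$. Indeed, by Proposition~\ref{prop: WL wyst sprawdzac na najwiekszych parach} (or its obvious adaptation: weak Lipschitzianity on a whole stratification reduces to weak Lipschitzianity on the ``incident'' pairs) the general condition for $f^{-1}$ reduces to the condition on each pair $\{f(\Gamma),f(\Lambda)\}$ with $f(\Gamma)\subset\overline{f(\Lambda)}\setminus f(\Lambda)$; and since $f$ is a homeomorphic embedding and all the $f|_{\Gamma}$ are embeddings, $f(\Gamma)\subset\overline{f(\Lambda)}$ is equivalent to $\Gamma\subset\overline{\Lambda}$, while $c\in\Gamma$ far from $\overline{\Lambda}$ gives a neighbourhood disjoint from $\Lambda$, making the condition vacuous there. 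So only $c\in\Gamma\cap\overline{\Lambda}$ matters, and by condition 3) of a stratification we may assume $\Gamma\subset\overline{\Lambda}\setminus\Lambda$.

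It then remains to translate ``$|f(a)-f(b)|/|a-b|$ is bounded below by a positive constant on $\Gamma\cap U$ times $\Lambda\cap U$ for some neighbourhood $U$ of $c$'' into the sequential $\liminf$ statement in the proposition. One direction is immediate: a uniform positive lower bound forces every such $\liminf$ to be positive. For the converse I would argue by contradiction: if no neighbourhood $U$ works, then for $U=B(c,1/\nu)$ there exist $a_{\nu}\in\Gamma\cap U$, $b_{\nu}\in\Lambda\cap U$ with $|f(a_{\nu})-f(b_{\nu})|/|a_{\nu}-b_{\nu}|<1/\nu$; then $a_{\nu},b_{\nu}\to c$ and $\liminf_{\nu}|f(a_{\nu})-f(b_{\nu})|/|a_{\nu}-b_{\nu}|=0$, contradicting the hypothesis applied to these very sequences. (One should note $a_\nu\ne b_\nu$ since $\Gamma\cap\Lambda=\emptyset$, so the ratio is well defined.) This yields the desired equivalence.

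I expect the only mildly delicate point to be the bookkeeping in the reduction step --- making precise that weak Lipschitzianity of $f^{-1}$ on all of $f\sX_{A}$ is equivalent to weak Lipschitzianity on each incident pair $\{f(\Gamma),f(\Lambda)\}$, and that non-incident or ``distant'' pairs contribute nothing because of the local (neighbourhood) nature of the definition. The heart of the argument --- rewriting the bounded-ratio condition for $f^{-1}$ via the homeomorphism $f$ and extracting sequences by a contradiction/diagonal argument --- is routine; no genuine analytic obstacle arises, since no o-minimality or finiteness beyond what is already assumed in the definition of a stratification is needed here.
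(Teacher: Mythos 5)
Your argument is correct: translating the bounded-ratio condition for $f^{-1}$ through the homeomorphism $f$, using the frontier condition 3) and locality to restrict to incident pairs $\Gamma\subset\overline{\Lambda}\setminus\Lambda$ with $c\in\Gamma$, and converting the uniform lower bound into the sequential $\liminf$ statement by the standard contradiction/extraction argument is exactly the natural proof. Note that the paper itself gives no proof of this proposition --- it is quoted with a reference to \cite{Cz}, Prop.\ 2.14 --- so there is nothing in the text to compare against, but your write-up supplies the expected routine verification without any gap.
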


\section{Guillaume Valette's Theorem}
\label{sec: Bi-Lipschitz homeo}

Our important tool will be a detailed version of a theorem of Guillaume Valette \cite{Val}, Proposition 3.13. In this section we will use the notation from
\cite{Val}.

 \begin{definition}\label{def: regular direction}
 Let $A$\ be a definable subset of $\R^{n}$\ and let $\lambda\in\R^{n}$\ be a unit vector (i.e. $|\lambda|=1$). We say that $\lambda$\ is \textit{a regular
 direction for} $A$\ if there is a constant $\alpha>0$\ such that for any $C^{1}$\ regular point $a$\ of $A$\ and any unit vector $v$\ tangent to $A$\ at $a$\
 we have $|v-\lambda|\geq \alpha$. Then we say that the orthogonal projection in the direction $\lambda$\ is a \textit{regular projection for} $A$.
 \end{definition}

\begin{theorem}
\label{trm: GV2 bilipsch homeom} Let $A\subset\R^{n}$\ be a definable subset of empty interior. Then there exists a
definable bi-Lipschitz homeomorphism
$\widetilde{h}:\R^{n}\rightarrow \R^{n}$\ such that $\widetilde{h}(A)$\ has a regular projection.
\end{theorem}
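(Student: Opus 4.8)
The plan is to construct $\widetilde h$ as a composition of two pieces: first a linear change of coordinates that makes a fixed direction, say $e_n$, behave well with respect to the tangent directions of $A$ \emph{generically}, and then a (locally Lipschitz, piecewise-linear in suitable coordinates) modification that repairs the places where $A$ still has tangent planes too close to $e_n$. The underlying point is that ``having a regular projection'' is a condition on the Gauss image (the set of unit tangent vectors to the $C^1$-regular part $A_{\mathrm{reg}}$ of $A$): we want a unit vector $\lambda$ that stays at distance $\geq\alpha$ from every $v$ tangent to $A$. Since $A$ has empty interior, $\dim A\leq n-1$, so at each regular point the tangent space has positive codimension; the difficulty is purely that the Gauss image of $A$ may be all of the unit sphere (think of a graph with vertical asymptotic tangents), so no single direction is regular \emph{before} we deform $A$.

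First I would stratify $A$ into finitely many definable $C^1$ cells $A=\bigsqcup_j \Gamma_j$, compatible with $A_{\mathrm{reg}}$, using $C^1$ cell decomposition. For each cell $\Gamma_j$ of dimension $<n$, the set of unit vectors occurring as limits of tangent directions at points of $\Gamma_j$ is a definable subset $G_j$ of the unit sphere $S^{n-1}$. If $\dim G_j<n-1$ for every $j$, then $\bigcup_j G_j$ is a closed definable subset of $S^{n-1}$ of dimension $<n-1$, hence nowhere dense, and any $\lambda$ in a small ball disjoint from it is a regular direction — in that favourable case $\widetilde h$ can be taken linear (a rotation sending that $\lambda$ to $e_n$). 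So the whole problem is reduced to the ``bad'' cells, those for which $G_j$ is too big. Here I would invoke the standard fact (essentially Valette's Lipschitz preparation, available by the reference to \cite{Val} that the excerpt tells us we may use) that after a definable bi-Lipschitz change of the ambient $\R^n$ one can arrange each such cell to be, in suitable coordinates, a Lipschitz graph $x_n=\gamma(x_1,\dots,x_k,\dots)$ with the ``last'' coordinate playing a distinguished role, whose tangent directions are then uniformly bounded away from $e_n$; the Lipschitz constant of the graph translates exactly into the angle bound $\alpha$ in Definition \ref{def: regular direction}.

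The key steps, in order: (1) stratify $A$ by definable $C^1$ cells compatible with $A_{\mathrm{reg}}$; (2) describe, for each cell, the closure of its Gauss image $G_j\subset S^{n-1}$ and note it is definable; (3) if $\bigcup_j G_j\neq S^{n-1}$ pick $\lambda$ outside its closure and take $\widetilde h$ linear; (4) otherwise apply Valette's bi-Lipschitz straightening (\cite{Val}) to replace $A$ by a set all of whose cells are uniform Lipschitz graphs over coordinate subspaces, in a way that can be done by a single definable bi-Lipschitz self-homeomorphism of $\R^n$; (5) verify that for such a set the common transversal direction (e.g. $e_n$) is regular, with $\alpha$ computed from the Lipschitz constants; (6) compose the finitely many bi-Lipschitz maps produced cell-by-cell into one $\widetilde h:\R^n\to\R^n$, checking that composition and the gluing over the finite stratification preserve bi-Lipschitzness and definability. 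I expect step (4)–(5) to be the main obstacle: turning the cell-wise Lipschitz-graph structure into a \emph{single global} bi-Lipschitz homeomorphism of $\R^n$ (rather than a collection of local charts) and making the angle estimate uniform across all cells simultaneously is where the real work lies, and it is precisely the content of the cited result of Valette that we are permitted to use. The bookkeeping in steps (1)–(3) and (6) is routine o-minimality (definability of tangent-vector limits, finiteness of cell decompositions, stability of bi-Lipschitz maps under composition).
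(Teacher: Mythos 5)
The paper's own proof of this theorem is nothing more than the citation of Valette's Proposition 3.13, and your proposal, once the (harmless but inessential) cell decomposition and easy/hard case distinction are stripped away, rests on exactly the same citation: the entire substantive content --- producing a \emph{single global} definable bi-Lipschitz self-map of $\R^{n}$ under which all strata simultaneously become uniform Lipschitz graphs transverse to $e_{n}$ --- is what you defer to \cite{Val}, so your route is essentially the paper's. Just note that steps (4)--(6), were they to be carried out independently rather than quoted, conceal the real difficulty (a straightening adapted to one cell can destroy the uniform angle bound already achieved for the others, so cell-by-cell maps cannot simply be composed), which is precisely why the paper invokes Valette's result wholesale.
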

\begin{proof}
See \cite{Val}, Proposition 3.13.
\end{proof}

To state a more detailed version, we adopt the following convention. If $\Lambda\subset\R^{n}$\ and $f:\R^{n}\longrightarrow\R$\ is a function, we will identify the restricted function $f|_{\Lambda}$\ with its graph; i.e. $\{(x,y): x\in\Lambda, y=f(x)\}$\ and if $g:\R^{n}\longrightarrow \R$\ is another function, then we put
$$(f,g)|_{\Lambda}=\{(x,y): x\in\Lambda, f(x)<y<g(x)\}.$$

\begin{theorem}\label{trm: biLip homeo could be C1 imbedding}
Let $C$\ be a definable, closed and nowhere dense subset of $\R^{n}$. Let $D_{1},...,D_{s}$\ be definable subsets of\ \ $C$.
There exists $\widetilde{h}: \R^{n}\longrightarrow \R^{n}$\ a definable, bi-Lipschitz homeomorphism such that $e_{n}$\ is a regular
direction for $\widetilde{h}(C)$\ and there exists $\mathcal{C}'$\ a definable $C^{q}$\ stratification of $\R^{n-1}$\
and $\eta_{k}:\R^{n-1}\longrightarrow \R$, $k=1,...,b$\ definable, Lipschitz functions such that $\eta_{1}\leq \eta_{2}\leq ...\leq \eta_{b}$,
$\widetilde{h}(C)\subset\underset{k=1}{\overset{b}{\bigcup}} \eta_{k}$\ and
\vskip 1 mm
\begin{tabular}{l}
 a) for any $\Gamma\in\mathcal{C}'$\ and for any $k=1,...,b$\ the restriction $\eta_{k}|_{\Gamma}$\ is of class $C^{q}$, \\
 b) for any $\Gamma\in\C'$\ and $k=1,...,b-1$,\ $\eta_{k}=\eta_{k+1}$\ on $\Gamma$\ or $\eta_{k}<\eta_{k+1}$\ on $\Gamma$, \\
 c) the family $\mathcal{C}=\left\{(\eta_{k},\eta_{k+1})|_{\Gamma}:\text{  }\Gamma\in \mathcal{C}', \eta_{k}|_{\Gamma}<\eta_{k+1}|_{\Gamma}, k=1,...,b-1\right\}\cup
 \left\{(-\infty,\eta_{1})|_{\Gamma}: \Gamma\in \mathcal{C}'\right\}\cup $\\
 \hskip 5 mm $\cup \left\{(\eta_{b},+\infty)|_{\Gamma}: \Gamma\in \mathcal{C}' \right\}\left\{\eta_{k}|_{\Gamma}: \Gamma\in \mathcal{C}', k=1,...,b\right\}$\ is a definable $C^{q}$\ stratification of $\R^{n}$, compatible \\
 \hskip 5 mm with $\widetilde{h}(C)$, $\widetilde{h}(D_{j})$\ for $j=1,...,s$, \\
 d) for any $\Lambda\in\mathcal{C}$\ the restriction $\widetilde{h}^{-1}|_{\Lambda}$\ is a definable $C^{q}$\ embedding.\\
\end{tabular}
\vskip 1 mm
\end{theorem}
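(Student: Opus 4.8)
The plan is to combine Theorem~\ref{trm: GV2 bilipsch homeom} with a direct construction of a cylindrical $C^q$ cell decomposition adapted to the data. First I would apply Theorem~\ref{trm: GV2 bilipsch homeom} to $C$ — which has empty interior, being closed and nowhere dense — and compose the resulting map with a suitable rotation of $\R^n$ (a definable linear isometry, hence bi-Lipschitz), obtaining a definable bi-Lipschitz homeomorphism $\widetilde h\colon\R^n\to\R^n$ for which $e_n$ is a regular direction for $C':=\widetilde h(C)$; write $D'_j:=\widetilde h(D_j)$. Since $e_n$ is regular for $C'$, the tangent spaces at $C^1$-regular points of $C'$ are uniformly bounded away from the vertical line $\R e_n$; using the quantitative content of this together with o-minimal uniform finiteness of the vertical fibres of $C'$ (cf.\ \cite{Val}), the set $C'$ is a finite union of graphs of definable $L$-Lipschitz functions defined on definable subsets of $\R^{n-1}$, with $L$ depending only on the regularity constant. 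Extending each such branch to a global $L$-Lipschitz definable function $\R^{n-1}\to\R$ by the definable McShane formula and sorting the resulting finite family pointwise, I obtain definable $L$-Lipschitz functions $\eta_1\leq\cdots\leq\eta_b\colon\R^{n-1}\to\R$ — an order statistic of finitely many $L$-Lipschitz functions is a lattice expression in them, hence $L$-Lipschitz — with $C'\subset\bigcup_{k=1}^b\eta_k$.

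Next I would choose, by the $C^q$ Cell Decomposition Theorem, a definable $C^q$ cell decomposition $\mathcal C'$ of $\R^{n-1}$ such that every $\eta_k|_\Gamma$ is of class $C^q$, any two of the $\eta_k$ are either equal or strictly comparable on each $\Gamma$, $\mathcal C'$ is compatible with every definable set $\{x\in\R^{n-1}:(x,\eta_k(x))\in C'\}$ and $\{x\in\R^{n-1}:(x,\eta_k(x))\in D'_j\}$, and $\mathcal C'$ satisfies the frontier condition (the last being always arrangeable by further refinement). The first two properties give conditions a) and b). For c) one checks that the family $\mathcal C$ built from $\mathcal C'$ and the $\eta_k$ is a partition of $\R^n$ into connected definable $C^q$ submanifolds, compatible with $C'$ and the $D'_j$ (because $C'\subset\bigcup_k\eta_k$ and the bands miss $\bigcup_k\eta_k$); the frontier condition for $\mathcal C$ reduces to that for $\mathcal C'$ once one uses that each $\eta_k$ is \emph{globally} continuous, so that the closure of a graph- or band-cell over $\Gamma$ meets a fibre over a cell $\Gamma'\subset\overline\Gamma\setminus\Gamma$ in a union of graph- and band-cells over $\Gamma'$, necessarily of strictly smaller dimension.

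The hard part is d). One cannot use an arbitrary $\widetilde h$ coming from Theorem~\ref{trm: GV2 bilipsch homeom}: the open bands $(\eta_k,\eta_{k+1})|_\Gamma$ over the open cells of $\mathcal C'$ cover $\R^n$ outside a set of dimension $<n$, so the requirement that $\widetilde h^{-1}$ be of class $C^q$ on each of them — it is then automatically a $C^q$ embedding, since the differential of a bi-Lipschitz map is injective wherever it exists — forces the non-smooth locus of $\widetilde h^{-1}$ to be contained, over the open cells of $\mathcal C'$, in $\bigcup_k\eta_k$, that is, to be itself in regular position with respect to $e_n$. I would arrange this by a more careful choice of the normalizing homeomorphism, applying Theorem~\ref{trm: GV2 bilipsch homeom} (in the form that normalizes several definable sets at once) not to $C$ alone but to $C$ together with the $(n-1)$-skeleton of a preliminary $C^q$ cell decomposition of $\R^n$ compatible with $C$ and the $D_j$, and — iterating the construction finitely often — also the non-smooth loci that successively appear; a final refinement of $\mathcal C'$ then makes $\widetilde h^{-1}$ of class $C^q$ on the lower-dimensional cells of $\mathcal C$ as well. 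I expect this coordination — keeping the boundary functions $\eta_k$ globally Lipschitz while forcing $\widetilde h^{-1}$ to be $C^q$ on the cells they bound — to be the main obstacle; it is in essence a refinement of the construction in \cite{Val} rather than a formal consequence of Theorem~\ref{trm: GV2 bilipsch homeom}.
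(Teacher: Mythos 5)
Your treatment of the existence of $\widetilde h$, of the Lipschitz functions $\eta_1\leq\dots\leq\eta_b$ covering $\widetilde h(C)$, and of properties a), b), c) via a $C^{q}$ cell decomposition of $\R^{n-1}$ matches the intended argument and is fine. The genuine gap is exactly where you locate it, in d), and your proposed fix does not close it. Treating Theorem \ref{trm: GV2 bilipsch homeom} as a black box, the non-smooth locus of $\widetilde h^{-1}$ is an unknown definable set of dimension $<n$ that need not lie over $\bigcup_k\eta_k$, and your remedy -- apply the normalization again to $C$ together with the skeleton of a preliminary decomposition ``and the non-smooth loci that successively appear, iterating finitely often'' -- is circular: the sets you would need to feed into Theorem \ref{trm: GV2 bilipsch homeom} depend on the homeomorphism that the theorem outputs, and each new normalizing homeomorphism $\widetilde h_2$ contributes its own uncontrolled non-smooth locus of $\widetilde h_2^{-1}$ (the composite inverse is $\widetilde h_1^{-1}\circ\widetilde h_2^{-1}$), so there is no argument that the process terminates. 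You acknowledge this yourself; as written, d) is not proved.

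The paper resolves d) not by iterating, but by opening up Valette's construction of $\widetilde h$ (Proposition 3.13 of \cite{Val}): between consecutive graphs, $\widetilde h$ acts affinely along the fibre direction, transporting the band bounded by the graphs of $\xi_k,\xi'_k$ over a regular projection $\pi_{\lambda_k}$ onto the band bounded by $\eta_k,\eta_{k+1}$ over $e_n$. This yields explicit closed formulas for $\widetilde h^{-1}$ on each graph $\eta_k$, on each band $(\eta_k,\eta_{k+1})$, and on the two unbounded bands, in terms of auxiliary definable bi-Lipschitz maps $\psi_{0k}:N_{\lambda_k}\to\R^{n-1}$ and the compositions $\xi_k\circ\psi_{0k}^{-1}$, with the fibre parameter entering only affinely. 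Consequently $\widetilde h^{-1}$ is automatically $C^{q}$ in the vertical direction inside every band, and d) reduces to choosing $\C'$ compatible with finitely many explicitly given definable sets ($\{\eta_k=\eta_{k+1}\}$, $\{\eta_k<\eta_{k+1}\}$, the projections of $\widetilde h(C)\cap\eta_k$ and $\widetilde h(D_j)\cap\eta_k$) and such that on each $\Gamma\in\C'$ the maps $\eta_k|_\Gamma$ and $\xi_k\circ\psi_{0k}^{-1}|_\Gamma$ are $C^{q}$ and $\psi_{0k}^{-1}|_\Gamma$ is a $C^{q}$ embedding -- all of which follows from the $C^{q}$ Cell Decomposition Theorem in one step, with no modification of $\widetilde h$ at all. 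So your instinct that d) requires a refinement of \cite{Val} rather than a formal consequence of Theorem \ref{trm: GV2 bilipsch homeom} is right, but the refinement needed is the explicit fibrewise description of $\widetilde h^{-1}$, not an iterated re-normalization.
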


\begin{remark} There is a misprint in the paper \cite{Val} in the proof of Proposition 3.13. It is written there that for $q\in\R^{n-1}$:
$$\eta_{k+1}(q)=\eta_{k}\circ \pi_{e_{n}}(q)+(\xi'_{k}-\xi_{k})\circ \pi_{\lambda_{k}}\circ \widetilde{h}^{-1}(q;\eta_{k}\circ\pi_{e_{n}}(q)).$$
As $\pi_{e_{n}}(q)=q$, then the formula for the mapping $\eta_{k+1}$\ is the following
$$\eta_{k+1}(q)=\eta_{k}(q)+(\xi'_{k}-\xi_{k})\circ \pi_{\lambda_{k}}\circ \widetilde{h}^{-1}(q;\eta_{k}(q)).$$
\end{remark}

\begin{proof} A construction of $\widetilde{h}$\ is the same as in the proof of Proposition 3.13 \cite{Val}.
In addition, observe that for $k=1,...,b$\ we have a definable, bi-Lipschitz homeomorphism $\psi_{0k}: N_{\lambda_{k}}\longrightarrow \R^{n-1}$
$$\psi_{0k}=(id_{\R^{n-1}},\eta_{k})^{-1}\circ \widetilde{h}|_{H_{k}}\circ (id_{N_{\lambda_{k}}}, \xi_{k}),$$
such that for each $z\in\R^{n-1}$

$a)$ $\widetilde{h}^{-1}(z+\eta_{k}(z)\cdot e_{n})=\psi_{0k}^{-1}(z)+\xi_{k}\circ \psi_{0k}^{-1}(z)\cdot \lambda_{k}$

$b)$\ $\widetilde{h}^{-1}(z+\eta_{k}(z)\cdot e_{n}+ \tau \cdot (\eta_{k+1}(z)-\eta_{k}(z))\cdot e_{n})=\psi^{-1}_{0k}(z)+\xi_{k}\circ \psi_{0k}^{-1}(z)\cdot \lambda_{k}+\tau\cdot(\eta_{k+1}(z)-\eta_{k}(z))\cdot \lambda_{k}$

\hskip 13,5 cm for $0<\tau\leq 1$ ;

$c)$\ $\widetilde{h}^{-1}(z+\eta_{b}(z)\cdot e_{n}+\tau\cdot e_{n})=\psi_{0b}^{-1}(z)+\xi_{b}\circ\psi_{0b}^{-1}(z)\cdot\lambda_{b}+\tau\cdot\lambda_{b}$\ \ for $\tau>0$.

$d)$\ $\widetilde{h}^{-1}(z+\eta_{1}(z)\cdot e_{n}+\tau\cdot e_{n})=\psi_{01}^{-1}(z)+\xi_{1}\circ\psi_{01}^{-1}(z)\cdot\lambda_{1}+\tau\cdot\lambda_{1}$\ \ for $\tau\leq 0$.
\vskip 1 mm
 As $\C'$\ it is enough to take a definable $C^{q}$\ stratification of $\R^{n-1}$\ compatible with
  the sets: $\{\eta_{k}=\eta_{k+1}\}$,\ $\{\eta_{k}<\eta_{k+1}\}$, $\pi_{\R^{n-1}}(\widetilde{h}(C)\cap\eta_{k})$, $\pi_{\R^{n-1}}(\widetilde{h}(D_{j})\cap \eta_{k})$\ and such that for each $\Gamma\in\C'$\ the mappings $\eta_{k}|_{\Gamma}$, $\xi_{k}\circ \psi_{0k}^{-1}|_{\Gamma}$\ are of class $C^{q}$\ and $\psi_{0k}^{-1}|_{\Gamma}$\ is a $C^{q}$\ embedding.

\end{proof}

\section{A definable, locally Lipschitz, weakly bi-Lipschitz triangulation}
\label{sec: Weak lipsch triang}

In this section for any definable subset we construct a triangulation that is locally Lipschitz and weakly bi-Lipschitz on the natural simplicial stratification of a simplicial complex. We work in an o-minimal structure on the ordered field $\mathbb{R}$, which admits
$C^{q}$\ Cell Decomposition Theorem, where $q\in\N\cup\{\infty,\omega\}$, $q\geq 1$.
We start with recalling standard definitions concerning triangulations.

\begin{definition}
Let $V$\ be an affine subspace of $\R^{n}$, $\Gamma$\ be a $C^{q}$\ submanifold of $\R^{n}$, $\Gamma\subset V$. Fix a point
$c\in\R^{n}\setminus V$. A \textit{cone with a vertex} $c$\ \textit{and a basis} $\Gamma$\ is a $C^{q}$\ submanifold
$$c*\Gamma=\{(1-t)\cdot c+ t\cdot x:\quad x\in\Gamma,\text{  } t\in (0,1)\}.$$
\end{definition}

\begin{definition} Let $k\in\N$, $k\leq n$. A $k$\ \textit{- dimensional simplex} in $\R^{n}$\ is a set
$$[y_{0},...,y_{k}]=\left\{\underset{i=0}{\overset{k}{\sum}}\beta_{i}\cdot y_{i}:
\quad \beta_{i}>0, i=0,...,k,\text{  } \underset{i=0}{\overset{k}{\sum}}\beta_{i}=1\right\},$$

where $y_{0},...,y_{k}$\ are affinitely independent in $\R^{n}$\ and are called the \textit{vertexes}.
\end{definition}

\begin{remark}\label{rem: simplex - open subset}
Observe that a simplex $\triangle=[y_{0},...,y_{k}]$\ is an open subset in the affine subspace $L$\ spanned by the points $y_{0},...,y_{k}$.
Then $\overline{\triangle}=\left\{\underset{i=0}{\overset{k}{\sum}}\beta_{i}\cdot y_{i}:
\quad\beta_{i}\geq 0, i=0,...,k,\text{  } \underset{i=0}{\overset{k}{\sum}}\beta_{i}=1\right\}$\ is the closure of $\triangle$\ and
$\partial\triangle=\overline{\triangle}\setminus \triangle$\ is the boundary of $\triangle$\ in $L$.
\end{remark}

\begin{definition} Let $l\in\N$, $k\in\N$, $l\leq k$. An $l$\ \textit{- dimensional face} of a simplex
 $\triangle=[y_{0},...,y_{k}]$\ is the following simplex $\triangle'$:
$$\triangle'=[y_{\nu_{0}},...,y_{\nu_{l}}],$$

where $0\leq \nu_{0}<...<\nu_{l}\leq k$.
\end{definition}

\begin{definition}
If $\triangle=[y_{0},...,y_{k}]$\ is a $k$\ - dimensional simplex in $\R^{n}$, then a \textit{barycentre} of $\triangle$\ is the point
$$0_{\triangle}=\overset{k}{\underset{i=0}{\sum}}\frac{1}{k+1}\cdot y_{i}.$$
\end{definition}

\begin{definition} A \textit{simplicial complex} in $\R^{n}$\ is a finite family $K$\ of simplexes in $\R^{n}$,\ which satisfies the following
conditions:

\begin{tabular}{l}
$1)$\ \ for any $S_{1},S_{2}\in K$, $S_{1}\neq S_{2}$,\ we have $S_{1}\cap S_{2}=\emptyset$. \\
$2)$\ \ if $S\in K$\ and $S'$\ is a face of $S$, then $S'\in K$.
\end{tabular}

A \textit{polyhedron of a simplicial complex} $K$\ is the set $$|K|=\bigcup K.$$
Observe that $|K|$\ is a definable compact subset of $\R^{n}$\ of dimension
$\dim K=\max\{\dim \triangle:\text{ }\triangle\in K\}$.
\end{definition}

\begin{definition} For $l\leq n$\ the $l$\ \textit{- dimensional skeleton }of a complex $K$\ is a subcomplex $K^{(l)}$\ of $K$\ defined by:
$$K^{(l)}=\{S\in K:\text{   }\dim S\leq l\}.$$
\end{definition}

\begin{definition}
Let $K$\ be a simplicial complex in $\R^{n}$. Then we define the \textit{barycentric subdivision} $K^{*}$\ of $K$\ by induction on $\dim K$.
\vskip 1 mm
$I. \text{ If }\dim K=0$\ put $K^{*}=K$.

$II. \text{ Assume that }$\ $\dim K=d>0$. By the induction hypotesis we have $\left(K^{(d-1)}\right)^{*}$\ which is a stratification of $K^{(d-1)}$.
Then we put
$$K^{*}= \left(K^{(d-1)}\right)^{*} \cup \left\{0_{\triangle}\ast \triangle':\text{  } \triangle'\subset \partial\triangle,
\triangle'\in \left(K^{(d-1)}\right)^{*},\triangle\in K, \dim\triangle=d\right\}\cup\{0_{\triangle}:\text{  } \triangle\in K,\dim \triangle=d\}.$$
\end{definition}

Now we will prove a lemma, that will be crucial for the proof of the theorem about the existence of a definable, locally Lipschitz,
weakly bi-Lipschitz triangulation.

\begin{lemma}\label{lem: weakly bi-bipschitzianity on polyhedron}
 Let $\widetilde{y}_{0},...,\widetilde{y}_{k}$\ be affinitely independent in $\R^{n-1}$\ and $T=[\widetilde{y}_{0},...,\widetilde{y}_{k}]$\
 be a simplex. Let
$$K_{T}=\{[\widetilde{y}_{\nu_{0}},...,\widetilde{y}_{\nu_{l}}]: 0\leq \nu_{0}<...<\nu_{l}\leq k, l\in\{0,...,k\}\}$$\ be the simplicial complex\ of all faces of $T$, so $\left|K_{T}\right|=\overline{T}$.
Let $f: |\overline{T}|\longrightarrow \R$, $g:|\overline{T}|\longrightarrow \R$\ be definable and Lipschitz functions such that
for each $\triangle\in K_{T}$\

\begin{tabular}{l}
$i)$\ $f= g$\ on $\overline{\triangle}$\ or there is a vertex $w$\ of $\triangle$\ such that $f(w)<g(w)$ \\
$ii)$\ $f|_{\triangle}$, $g|_{\triangle}$\ are of class $C^{q}$.
\end{tabular}\\
Let $\psi_{f}: |\overline{T}|\longrightarrow \R$, $\psi_{g}: |\overline{T}|\longrightarrow\R$\ be semilinear functions defined by formulae:
for any $y=\overset{k}{\underset{i=0}{\sum}}\beta_{i}\widetilde{y}_{i}\in \overline{T}$, where $\overset{k}{\underset{i=0}{\sum}}\beta_{i}=1$, $\beta_{i}\geq 0$, we have
$$\psi_{f}\left(\overset{k}{\underset{i=0}{\sum}}\beta_{i}\widetilde{y}_{i}\right) =\overset{k}{\underset{i=0}{\sum}}\beta_{i}\cdot f(\widetilde{y}_{i}), \qquad
\psi_{g}\left(\overset{k}{\underset{i=0}{\sum}}\beta_{i}\widetilde{y}_{i}\right) =\overset{k}{\underset{i=0}{\sum}}\beta_{i}\cdot g(\widetilde{y}_{i}).$$
Consider the following polyhedral complex
$$K_{p}=\{\psi_{f}|_{\triangle}: \triangle\in K_{T}\}\cup\{\psi_{g}|_{\triangle}: \triangle\in K_{T}\}\cup\{\left(\psi_{f}, \psi_{g}\right)|_{\triangle}:
\triangle\in K_{T}, \psi_{f}|_{\triangle}<\psi_{g}|_{\triangle}\}$$
and put $\left|K_{p}\right|=\bigcup K_{p}$. Then the mapping $H: \left|K_{p}\right|\longrightarrow \R^{n}$\ defined by the following formula:
$$ (\mathfrak{F})\quad H(y,z)=\begin{cases}(y,f(y)), & (y,z)\in \psi_{f}|_{\triangle}, \triangle \in K_{T}, \\
\left( y,\frac{z-\psi_{f}(y)}{\psi_{g}(y)-\psi_{f}(y)}\cdot g(y)+
 \frac{\psi_{g}(y)-z}{\psi_{g}(y)-\psi_{f}(y)}\cdot f(y) \right), & (y,z)\in (\psi_{f}, \psi_{g})|_{\triangle}, \triangle\in K_{T},\\
(y,g(y)), & (y,z)\in \psi_{g}|_{\triangle}, \triangle \in K_{T} \\ \end{cases}$$
 is a definable homeomorphic embedding such that
 $$H\left( \left| K_{p}\right| \right) =\left\{ (y,z)\in\overline{T}\times\R: f(y)\leq z\leq g(y)\right\} $$
 and
 $$\{H(S):\text{\ \ }S\in K_{p}\}=\{f|_{\triangle}:\text{ }\triangle\in K_{T}\}\cup\{g|_{\triangle}:\text{ }\triangle\in K_{T}\}
 \cup\{(f,g)|_{\triangle}: \text{ }\triangle\in K_{T}, f|_{\triangle}<g|_{\triangle}\}$$\
 and moreover \\
 \begin{tabular}{l}
 a) $H$\ is a Lipschitz mapping; \\
 b) $H^{-1}$\ is locally Lipschitz on $\{(y,z)\in \overline{T}\times\R: f(y)\leq z\leq g(y), f(y)<g(y)\}$;\\
 c) $H$\ is weakly bi-Lipschitz of class $C^{q}$\ on $K_{p}$.
 \end{tabular}
\end{lemma}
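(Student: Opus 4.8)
The plan is to verify each of the assertions in order, treating the three ``moreover'' items (a), (b), (c) as the heart of the matter while the earlier claims (that $H$ is a well-defined homeomorphic embedding with the stated image and stated image complex) follow from a direct inspection of the formula $(\mathfrak{F})$.

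First I would check that $H$ is well-defined and continuous: on the overlaps of the pieces $\psi_f|_\triangle$, $(\psi_f,\psi_g)|_\triangle$, $\psi_g|_\triangle$ the three branches of $(\mathfrak{F})$ agree (at $z=\psi_f(y)$ the middle branch returns $(y,f(y))$, at $z=\psi_g(y)$ it returns $(y,g(y))$), and on a common face the semilinear functions $\psi_f,\psi_g$ restrict consistently because they are determined by the vertex values $f(\widetilde y_i),g(\widetilde y_i)$. Fixing $y$, the map $z\mapsto H(y,z)$ is, on each fibre, the unique increasing affine reparametrization of the segment $[\psi_f(y),\psi_g(y)]$ onto $[f(y),g(y)]$ (degenerating to a point when $f(y)=g(y)$), hence a bijection onto $\{f(y)\le z\le g(y)\}$; since the first coordinate is preserved, $H$ is a continuous bijection of $|K_p|$ onto $\{(y,z)\in\overline T\times\R: f(y)\le z\le g(y)\}$, and it is definable because $f,g,\psi_f,\psi_g$ are. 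That $H$ is a homeomorphic embedding then follows since $|K_p|$ is compact. The identity $\{H(S): S\in K_p\}=\dots$ is immediate from $(\mathfrak{F})$: $H$ carries $\psi_f|_\triangle$ onto the graph $f|_\triangle$, carries $\psi_g|_\triangle$ onto $g|_\triangle$, and carries $(\psi_f,\psi_g)|_\triangle$ onto $(f,g)|_\triangle$ exactly because the middle branch sweeps $z$ over the open fibre $(\psi_f(y),\psi_g(y))$ and maps it onto the open fibre $(f(y),g(y))$; condition $i)$ guarantees that $\psi_f|_\triangle<\psi_g|_\triangle$ precisely when $f|_\triangle<g|_\triangle$, so the three index sets match up.

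For (a), that $H$ is Lipschitz: the first coordinate is the identity, so I only need a Lipschitz bound on the second coordinate. On the graph pieces this is just the Lipschitz constant of $f$ (resp. $g$). On a middle piece $(\psi_f,\psi_g)|_\triangle$ write $H_2(y,z)=f(y)+\frac{z-\psi_f(y)}{\psi_g(y)-\psi_f(y)}\,(g(y)-f(y))$; setting $t=\frac{z-\psi_f(y)}{\psi_g(y)-\psi_f(y)}\in(0,1)$ one has $H_2=(1-t)f(y)+t\,g(y)$, a convex combination, and the key estimate is that $t$ as a function of $(y,z)$ is Lipschitz on the closure of each such piece — this is where condition $i)$ enters decisively, since the presence of a vertex $w$ with $f(w)<g(w)$ forces $\psi_g-\psi_f$ to be a nonnegative semilinear function that vanishes only where it is forced to, giving a \L{}ojasiewicz-type lower bound $\psi_g(y)-\psi_f(y)\ge c\cdot(\text{dist to the zero face})$ comparable to $g(y)-f(y)$, and the factor $(g(y)-f(y))$ multiplying $t$ absorbs the blow-up of $\nabla t$. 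I would make this precise by the elementary observation that for a simplex with vertex values $a_i\le b_i$, the ratio $\frac{z-\sum\beta_i a_i}{\sum\beta_i(b_i-a_i)}$, on $\{\sum\beta_i a_i< z<\sum\beta_i b_i\}$, extends to a Lipschitz function of $(\beta,z)$ — essentially because it is a quotient of two affine functions whose denominator controls the numerator. Then $H_2(y,z)-H_2(y',z')$ splits into $[(1-t)-(1-t')]f(y)+\dots$, each term estimated by the Lipschitz constants of $f,g$ and of $t$ times the bounded quantities $f,g$.

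For (b) and (c): on the region $\{f(y)<g(y)\}$ one has an explicit inverse, $H^{-1}(y,w)=(y,\psi_f(y)+\frac{w-f(y)}{g(y)-f(y)}(\psi_g(y)-\psi_f(y)))$, and local Lipschitzness of $H^{-1}$ there is the symmetric computation to (a): away from the locus $f=g$ the denominator $g(y)-f(y)$ is locally bounded below, so $H^{-1}$ is locally Lipschitz, giving (b). For (c) I would invoke Proposition~\ref{prop: Lip=>weak-Lip}: by (a), $H$ is locally Lipschitz, and since the pieces of $K_p$ are restrictions of semilinear maps to simplices while $f,g$ are $C^q$ on each $\triangle\in K_T$ by $ii)$, the formula $(\mathfrak{F})$ shows $H|_S$ is $C^q$ for every $S\in K_p$ (the denominator $\psi_g-\psi_f$ being a strictly positive $C^\infty$ — indeed affine — function on an open piece); hence $H$ is weakly Lipschitz of class $C^q$ on $K_p$. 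For the inverse being weakly Lipschitz on $HK_p$ I would use Proposition~\ref{prop: weak-lip-condition-for-inverse-mapping}: take strata $\Gamma\subset\overline\Lambda\setminus\Lambda$ in $K_p$, a point $c\in\Gamma$, and sequences $a_\nu\in\Gamma$, $b_\nu\in\Lambda$ converging to $c$; because the first coordinate is untouched by $H$ and the second-coordinate reparametrization is, on each fibre, affine with slope $\frac{\psi_g-\psi_f}{g-f}$ bounded below near $c$ by a positive constant in the relevant cases (this is again condition $i)$ plus the \L{}ojasiewicz bound), one gets $\liminf |H(a_\nu)-H(b_\nu)|/|a_\nu-b_\nu|>0$. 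The case analysis over which faces $\Gamma,\Lambda$ lie in — both in $\{f=g\}$, one in a graph piece and one in a band piece, etc. — is the only genuinely laborious part.

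\textbf{Main obstacle.} The crux is the Lipschitz estimate in (a) and the matching lower bound in (b)/(c): proving that the reparametrization parameter $t(y,z)=\frac{z-\psi_f(y)}{\psi_g(y)-\psi_f(y)}$ is Lipschitz up to the boundary where $\psi_g-\psi_f\to0$, which rests on the semilinear \L{}ojasiewicz inequality comparing $\psi_g-\psi_f$ with $g-f$ and on the cancellation provided by the factor $g(y)-f(y)$ in $H_2$. Everything else is bookkeeping on the face structure of $K_T$ and the piecewise definition $(\mathfrak{F})$.
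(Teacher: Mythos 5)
Your overall architecture is the same as the paper's (everything reduces to the comparison $\frac{g-f}{\psi_g-\psi_f}\leq C$ forced by hypothesis $i)$; the explicit inverse for b); Proposition~\ref{prop: Lip=>weak-Lip} and Proposition~\ref{prop: weak-lip-condition-for-inverse-mapping} for c)), but two of your key steps fail as stated. First, in a), the ``elementary observation'' that $t(y,z)=\frac{z-\psi_f(y)}{\psi_g(y)-\psi_f(y)}$ extends to a Lipschitz function on the closure of a band piece is false: take $k=1$, $T=[0,1]$, $f\equiv 0$, $g(y)=y$, so $\psi_g(y)=y$ and $t=z/y$ on $\{0<z<y\}$; the points $(y,y/2)$ and $(y,y/4)$ are $y/4$ apart while their $t$-values differ by $1/4$, so $t$ is bounded but not Lipschitz near $(0,0)$ — and this is exactly the degenerate situation ($\psi_g-\psi_f\to 0$) that constitutes the whole difficulty. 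Consequently your splitting of $H_2(y,z)-H_2(y',z')$ ``estimated by \dots\ the Lipschitz constant of $t$'' does not go through; the cancellation between $\nabla t$ and the factor $g-f$ must be performed inside the estimate. The paper does this directly: after reducing to $f\equiv\psi_f\equiv 0$ by the shear $\chi$, it bounds the first-order partials of $H_2$ on the convex piece $(0,\psi_g)|_T$ (namely $\partial_z H_2=g/\psi_g$ and $\partial_{y_i}H_2=\frac{z}{\psi_g}\partial_{y_i}g-\frac{z}{\psi_g}\frac{g}{\psi_g}\partial_{y_i}\psi_g$, with $z/\psi_g\leq 1$), shows $g/\psi_g$ is bounded by the argument you sketched (the ratio bound coming from $i)$: $g$ vanishes on the whole face spanned by the vertices where it vanishes, is Lipschitz, and $\psi_g$ is linearly bounded below off that face), and concludes by the Mean Value Theorem on the convex set. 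So the ratio bound you identified is indeed the crux, but it must be applied to the derivatives of $H_2$, not packaged as Lipschitzness of $t$.

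Second, in c) the only genuinely delicate pair is $\Gamma=\psi_f|_{\triangle}=\psi_g|_{\triangle}$ (a face over which $f=g$) with $\Lambda=(\psi_f,\psi_g)|_{T}$, and there your fibre-slope mechanism is the wrong tool: the fibre slope of $H$ is $\frac{g-f}{\psi_g-\psi_f}$, which need not be bounded below near such a $\Gamma$ (take $f\equiv 0$, $g(y)=y^2$ on $[0,1]$: the slope equals $y\to 0$), while the quantity $\frac{\psi_g-\psi_f}{g-f}$ that you claim is bounded below is the fibre slope of $H^{-1}$, and a lower bound on it only re-proves that $H$ does not expand, which is not what the criterion of Proposition~\ref{prop: weak-lip-condition-for-inverse-mapping} asks for. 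The paper's argument avoids slopes entirely: after reducing, via Proposition~\ref{prop: WL wyst sprawdzac na najwiekszych parach} together with a) and b), to exactly this pair, one notes that for $a_\nu=(a'_\nu,a_{\nu n})\in\Gamma$ and $b_\nu=(b'_\nu,b_{\nu n})\in\Lambda$ both heights are squeezed between the Lipschitz functions $\psi_f$ and $\psi_g$, which coincide over the base of $\Gamma$, so $|a_{\nu n}-b_{\nu n}|\leq L\,|a'_\nu-b'_\nu|$ and hence $|H(a_\nu)-H(b_\nu)|\geq |a'_\nu-b'_\nu|\geq c\,|a_\nu-b_\nu|$, which is precisely the required liminf bound; note that $H$ genuinely contracts vertical distances there, so no slope lower bound for $H$ can exist. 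Your treatment of b) and of the preliminary claims (well-definedness, bijectivity fibrewise, compactness giving the homeomorphism, the identification of $\{H(S)\}$) is fine.
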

\begin{proof}
We will prove $a)$, $b)$\ and $c)$\ leaving the other - more standard part - to the reader. Notice that $H$\ is well-defined due to $i)$\ which implies
that for each $\triangle\in K_{T}$\ $f<g$\ on $\triangle$\ if and only if $\psi_{f}<\psi_{g}$\ on $\triangle$\ and $f\equiv g$\ on $\triangle$\ if and only if
$\psi_{f}\equiv \psi_{g}$\ on $\triangle$.

To prove $a)$\ first observe that using the following Lipschitz automorphism
$$\chi: \overline{T}\times \R\ni (y,z)\longmapsto (y, z-f(y))\in \overline{T}\times \R$$
we can assume without loss of generality that $f\equiv \psi_{f}\equiv 0$. Clearly, we can also assume that $g>0$\ on $T$. Put $S=(0,\psi_{g})|_{T}$\ and
$H(y,z)=(y, H_{2}(y,z))$. Since $S$\ is a convex set and $H|_{\overline{S}}$\ is continuous\ and $H|_{S}$\ is of class $C^{q}$, to prove that $H$\ is Lipschitz it
suffices\footnote{Thanks to the Mean Value Theorem, see \cite{Di} Theorem 8.5.2.}\ to check that all first-order partial derivatives of $H$\ are bounded on $S$.
Since
$$\frac{\partial H_{2}}{\partial{y_{i}}}(y,z)=\frac{z}{\psi_{g}(y)}\cdot\frac{\partial g(y)}{\partial y_{i}}-\frac{z}{\psi_{g}(y)}\cdot\frac{g(y)}{\psi_{g}(y)}\cdot
\frac{\partial\psi_{g}(y)}{\partial y_{i}}\qquad \text{and}\qquad \frac{\partial H_{2}}{\partial z}(y,z)=\frac{g(y)}{\psi_{g}(y)},$$\
it is enough to check that $\frac{g}{\psi_{g}}$\ is bounded on $T$. This is clear if $\psi_{g}(\widetilde{y}_{j})=g(\widetilde{y}_{j})>0$\ for all $j$, so assume that
$\{\widetilde{y}_{0},...,\widetilde{y}_{l}\}=\{\widetilde{y}_{j}: g(\widetilde{y}_{j})=0\}$, where $0\leq l<k$.

For any $y=\underset{\nu=0}{\overset{k}{\sum}}\beta_{\nu}\widetilde{y}_{\nu}\in T$, where $\beta_{\nu}>0$\ and
$\underset{\nu=0}{\overset{k}{\sum}}\beta_{\nu}=1$\ take
$$ x=\frac{\beta_{0}}{\underset{\nu=0}{\overset{l}{\sum}}\beta_{\nu}}\cdot \widetilde{y}_{0}+...+
\frac{\beta_{l}}{\underset{\nu=0}{\overset{l}{\sum}}\beta_{\nu}}\cdot \widetilde{y}_{l}.$$
Then $g(x)=0$\ by $i)$\ and
$$x-y= \beta_{l+1}(x-\widetilde{y}_{l+1})+...+\beta_{k}(x-\widetilde{y}_{k}).$$
Consequently,
$$\left|\frac{g(y)}{\psi_{g}(y)} \right|=
\left|\frac{g(y)-g(x)}{\psi_{g}(y)} \right|\leq \frac{L_{g}\cdot|x-y|}{\psi_{g}(y)}\leq
L_{g}\cdot\frac{\underset{\nu=l+1}{\overset{k}{\sum}}\beta_{\nu}\cdot |x-\widetilde{y}_{\nu}|}
{\underset{\nu=l+1}{\overset{k}{\sum}}\beta_{\nu}\cdot g(\widetilde{y}_{\nu})}\leq
\frac{L_{g}\cdot diam T\cdot (k-l)}{\min\{g(\widetilde{y}_{\nu}): \nu>l\}}.$$
This completes the proof of $a)$.

To prove $b)$\ take any point $(c,d)\in\overline{T}\times\R$\ such that $f(c)\leq d <g(c)$\ (for the case $f(c)<d\leq g(c)$\ the argument will be similar).
Again, using the Lipschitz automorphism $\chi$\ we can assume without loss of generality that $f\equiv \psi_{f}\equiv 0$.
Put
$$\Pi=\{(y,z)\in T\times\R: 0<z<g(y)\}.$$
Then $(c,d)\in\overline{\Pi}$\ and it is clear that $(c,d)$\ admits arbitrarily small neighbourhood $U$\ in $\R^{n}$\ such that $U\cap\Pi$\ is convex.
Hence now it is enough to notice that all first-order partial derivatives of
$$H^{-1}(y,z)=\left(y,\frac{\psi_{g}(y)}{g(y)}\cdot z\right)$$
are bounded on some $U\cap\Pi$. This completes the proof of $b)$.

To prove $c)$\ take any pair $\Gamma$, $\Lambda\in K_{p}$\ such that $\Gamma\subset\overline{\Lambda}\setminus\Lambda$. We will check that $H$\ is weakly bi-Lipschitz
of class $C^{q}$\ on $\{\Gamma, \Lambda\}$. This is clear in case $\Lambda=\psi_{f}|_{\triangle}$\ or $\Lambda=\psi_{g}|_{\triangle}$, since then
$H|_{\overline{\Lambda}}$\ is bi-Lipschitz. Hence without loss of generality we can assume\footnote{see Proposition \ref{prop: WL wyst sprawdzac na najwiekszych parach}} that $\Lambda=(\psi_{f},\psi_{g})|_{T}$. In view of $a)$\ and $b)$\ we can
also assume that $\Gamma=\psi_{f}|_{\triangle}=\psi_{g}|_{\triangle}$\ for some $\triangle\in K_{T}$, $\triangle\subset \overline{T}\setminus T$.
Take any $c\in\Gamma$\ and two sequences $a_{\nu}\in\Gamma$, $b_{\nu}\in\Lambda$\ ($\nu\in\N$)\ such that $a_{\nu}$, $b_{\nu}\longrightarrow c$. Put
$a_{\nu}=(a_{\nu}', a_{\nu n})$, $b_{\nu}=(b_{\nu}', b_{\nu n})$, where $a_{\nu}'$, $b_{\nu}'\in \R^{n-1}$, $a_{\nu n}, b_{\nu n}\in \R$. Since
$$\underset{\nu\to+\infty}{\liminf}\frac{|H(a_{\nu})-H(b_{\nu})|}{|a_{\nu}-b_{\nu}|}\geq
\underset{\nu\to+\infty}{\liminf}\frac{|a_{\nu}'-b_{\nu}'|}{|a_{\nu}-b_{\nu}|}>0,$$
the proof is completed by Proposition \ref{prop: weak-lip-condition-for-inverse-mapping}.
\end{proof}

Remind now briefly a definition of a definable $C^{q}$\ triangulation, $q\in\N\cup\{\infty,\omega\}$, $q\geq 1$.

\begin{definition}
Let $A$\ be a compact definable set in $\R^{n}$, $q\in\N\cup\{\infty, \omega\}$.
A \textit{definable} $C^{q}$ \textit{triangulation} of the set $A$\ is a pair $(K,f)$\ of a simplicial complex $K$\ and
a definable homeomorphism $f: |K|\longrightarrow A $ \ such that for each $\triangle \in K$\
the set $f(\triangle)$\ is a definable $C^{q}$\ submanifold of $\R^{n}$\ and
$f|_{\triangle}$\ is a definable $C^{q}$\ diffeomorphism onto $f(\triangle)$.
If $A$\ is not compact, then a \textit{definable $C^{q}$\ triangulation }of $A$\ is a pair $(K',f)$, where
$K'$\ is a subfamily of a simplicial complex $K$\ and
$f: |K'|\longrightarrow A $\ is a definable homeomorphism of $|K'|=\bigcup K'$\ onto $A$\ such that for each $\triangle \in K'$\
the set $f(\triangle)$\ is a definable $C^{q}$\ submanifold of $\R^{n}$\ and
$f|_{\triangle}: \triangle\longrightarrow f(\triangle)$\ is a definable $C^{q}$\ diffeomorphism.
\vskip 1 mm
Let $A_{1},...,A_{r}$\ be definable subsets of $A$.
We say that a triangulation $(K,f)$\ is \textit{compatible with the sets} $A_{1},...,A_{r}$, if
the stratification $\{f(\triangle):\quad\triangle\in K\}$\ is compatible with $A_{1},..., A_{r}$.
\end{definition}

We may prove now one of the main theorems in this paper.

\begin{theorem}\label{trm: Weakly_lipschitz_triang}(Definable, locally Lipschitz, weakly bi-Lipschitz triangulation) Let $A$\ be a definable subset
of $\R^{n}$, $A_{1},..., A_{r}$\ be definable subsets of $A$, $r\in\N$.
Then there exists a definable $C^{q}$\ triangulation $(K,H)$\ of the set $A$, compatible with $A_{1}, ..., A_{r}$\ and such that
\vskip 1 mm

\begin{tabular}{l}
 a) \ $H$\ \ is a locally Lipschitz mapping; \\
 b) \ $H$\ is weakly bi-Lipschitz of class $C^{q}$\ on the natural simplicial stratification $K$\
 of the set $|K|$.\\
\end{tabular}
\end{theorem}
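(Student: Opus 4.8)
The plan is to prove this by induction on $n$, the dimension of the ambient space, reducing at each step to a hyperplane by means of Guillaume Valette's Theorem (Theorem~\ref{trm: biLip homeo could be C1 imbedding}) and then assembling the triangulation from the fibrewise pieces produced by Lemma~\ref{lem: weakly bi-bipschitzianity on polyhedron}. We may clearly assume $A$ is compact (otherwise, work with $\overline{A}$ and the family of its definable subsets together with $A$ and the $A_i$, and at the end restrict $K$ to the subcomplex mapped into $A$) and, by a first cell decomposition, that $A$ has empty interior in $\R^n$ --- the top-dimensional cells can be handled separately as a product situation, or one simply works with $C=\overline{A}$, which after a generic linear change of coordinates has empty interior in $\R^n$ if $\dim A<n$, while the case $\dim A=n$ is obtained from the case of $\overline{A}\setminus\operatorname{int}\overline{A}$ by coning. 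The base case $n=0$ is trivial.

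First I would apply Theorem~\ref{trm: biLip homeo could be C1 imbedding} with $C=\overline{A}$ and $D_1,\dots,D_s$ an enumeration of $A,A_1,\dots,A_r$ and their pairwise differences, obtaining the definable bi-Lipschitz homeomorphism $\widetilde{h}\colon\R^n\to\R^n$, the stratification $\mathcal{C}'$ of $\R^{n-1}$, the Lipschitz functions $\eta_1\le\dots\le\eta_b$, and the $C^q$ stratification $\mathcal{C}$ of $\R^n$ compatible with all the $\widetilde h(D_j)$, together with property d): $\widetilde h^{-1}|_\Lambda$ is a $C^q$ embedding for each $\Lambda\in\mathcal{C}$. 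Since we are free to choose coordinates so that $\widetilde h(\overline A)$ is bounded, only finitely many strata of $\mathcal C'$ meet the relevant compact region; replacing $\mathcal C'$ by a refinement I may also assume it is a finite stratification of a suitable large closed ball $B\subset\R^{n-1}$. Next I would apply the inductive hypothesis in dimension $n-1$ to $B$ with the distinguished family consisting of the closures of the strata of $\mathcal{C}'$ (restricted to $B$), getting a definable $C^q$ triangulation $(K',g)$ of $B$, locally Lipschitz, weakly bi-Lipschitz of class $C^q$ on $K'$, and compatible with $\mathcal{C}'$; so $g$ maps each open simplex $T$ of $K'$ diffeomorphically onto a single stratum $\Gamma_T\in\mathcal{C}'$.

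Now fix a simplex $T=[\widetilde y_0,\dots,\widetilde y_k]\in K'$ with $g(T)=\Gamma_T$. For each index $k$ with $\eta_k|_{\Gamma_T}<\eta_{k+1}|_{\Gamma_T}$, pull the functions $\eta_k,\eta_{k+1}$ back by $g$ to get definable Lipschitz functions $f=\eta_k\circ g$, $h=\eta_{k+1}\circ g$ on $\overline T$ (Lipschitz, since $\eta_k$ is Lipschitz on $\R^{n-1}$ and $g$ is locally Lipschitz on the compact $\overline T$); by b) of the detailed Valette theorem, on each face these are either equal or strictly ordered, so hypothesis i) of Lemma~\ref{lem: weakly bi-bipschitzianity on polyhedron} holds, and ii) holds because $\eta_k|_{\Gamma}$ is $C^q$ and $g|_{\triangle}$ is a $C^q$ diffeomorphism onto a stratum of $\mathcal C'$. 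Lemma~\ref{lem: weakly bi-bipschitzianity on polyhedron} then furnishes, over $\overline T$, a polyhedral complex $K_p^{T,k}$ and a Lipschitz homeomorphic embedding $H^{T,k}$ which is weakly bi-Lipschitz of class $C^q$ on $K_p^{T,k}$ and whose image is $\{(y,z)\in\overline T\times\R: f(y)\le z\le h(y)\}$, carrying the natural simplicial stratification to $\{f|_\triangle\},\{h|_\triangle\},\{(f,h)|_\triangle\}$. Similarly handle the bottom band $(-\infty,\eta_1)$ and top band $(\eta_b,+\infty)$ --- here the region is only closed on one side, but after intersecting with $\widetilde h(\overline A)$ (which is bounded) the relevant piece is a graph-bounded slab of the same type, and Lemma~\ref{lem: weakly bi-bipschitzianity on polyhedron} applies with $g$ replaced by the finite endpoint of $\widetilde h(\overline A)$ over each cell; alternatively these bands simply do not meet $\widetilde h(\overline A)$ and need no triangulation. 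Composing each $H^{T,k}$ with the map $(y,z)\mapsto(g(y),z)$ (a $C^q$ diffeomorphism on each cell, locally Lipschitz, weakly bi-Lipschitz by the inductive properties of $g$ and Propositions~\ref{prop: comp-weak-lip-weak-lip} and~\ref{prop: iloczyn WL jest WL}) gives a piecewise-linear model of the part of $\widetilde h(\overline A)$ lying over $\Gamma_T$ between consecutive $\eta$'s, together with a locally Lipschitz, weakly bi-Lipschitz parametrisation of it; these fit together along shared faces because the $\psi_f,\psi_g$ in the Lemma are built linearly from boundary values, so the pieces over a face of $T$ are literally faces of the pieces over $T$. Gluing over all $T\in K'$ and all bands produces a simplicial complex $\widehat K$ and a definable homeomorphism $\widehat H\colon|\widehat K|\to\widetilde h(\overline A)$ which is locally Lipschitz, weakly bi-Lipschitz of class $C^q$ on $\widehat K$ (weak bi-Lipschitzianity being a local condition on pairs of strata, Proposition~\ref{prop: WL wyst sprawdzac na najwiekszych parach}, and holding within each block), and compatible with all the $\widetilde h(D_j)$. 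Finally, compose with $\widetilde h^{-1}$: by property d) of Theorem~\ref{trm: biLip homeo could be C1 imbedding}, $\widetilde h^{-1}|_\Lambda$ is a $C^q$ embedding for each $\Lambda\in\mathcal{C}$, and $\widetilde h^{-1}$ is globally Lipschitz; composing $\widehat H$ with $\widetilde h^{-1}$ (after one further compatible subdivision of $\widehat K$ so that each simplex maps into one stratum of $\mathcal C$) keeps the map locally Lipschitz (composition of locally Lipschitz maps) and weakly bi-Lipschitz of class $C^q$ on the complex, by Proposition~\ref{prop: comp-weak-lip-weak-lip} applied to $\widetilde h^{-1}$ --- which is globally Lipschitz, hence weakly Lipschitz on $\mathcal C$ by Proposition~\ref{prop: Lip=>weak-Lip}, and whose inverse $\widetilde h$ is likewise globally Lipschitz, hence weakly Lipschitz on $\widetilde h(\mathcal C)$. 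Restricting the resulting complex to the subcomplex whose image lies in $A$ yields the desired triangulation $(K,H)$ of $A$, compatible with $A_1,\dots,A_r$, with the required properties a) and b).

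The main obstacle I expect is the gluing step: ensuring that the locally-defined homeomorphisms $H^{T,k}$ (and their composites with $(g,\mathrm{id})$) agree on overlaps to produce a single simplicial complex and a single homeomorphism, and that the weak bi-Lipschitz property --- which is not a purely local property of the map but a condition on \emph{pairs} of strata, one in the closure of the other --- survives both the gluing and the two compositions (with $(g,\mathrm{id})$ and with $\widetilde h^{-1}$). Propositions~\ref{prop: comp-weak-lip-weak-lip}, \ref{prop: iloczyn WL jest WL}, \ref{prop: WL wyst sprawdzac na najwiekszych parach} and~\ref{prop: weak-lip-condition-for-inverse-mapping} are designed exactly for this, but their hypotheses (images of strata landing in single strata; the $\liminf$ estimate for the inverse) must be checked carefully, in particular that $\widetilde h^{-1}$ composed with $\widehat H$ still has weakly Lipschitz inverse --- which reduces, via Proposition~\ref{prop: weak-lip-condition-for-inverse-mapping}, to the fact that $\widetilde h$ is bi-Lipschitz and that $\widehat H^{-1}$ is weakly Lipschitz on $\widehat H(\widehat K)=\widetilde h(\mathcal C)$. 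A secondary bookkeeping obstacle is the non-compact case and the behaviour of the top band $(\eta_b,+\infty)$ and the full-dimensional part of $A$, all of which I would dispatch by the standard device of working with $\overline A$ inside a large ball, coning over the boundary sphere, and discarding at the end the simplices not mapped into $A$.
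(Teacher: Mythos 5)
Your overall route is the paper's own: induction on the ambient dimension, the detailed Valette theorem to put the bad set on finitely many Lipschitz graphs $\eta_1\leq\dots\leq\eta_b$, an inductive triangulation in $\R^{n-1}$, the fibrewise prism construction of Lemma~\ref{lem: weakly bi-bipschitzianity on polyhedron}, and a final composition with $\widetilde h^{-1}$ via property d) of Theorem~\ref{trm: biLip homeo could be C1 imbedding}. However, your verification of hypothesis i) of Lemma~\ref{lem: weakly bi-bipschitzianity on polyhedron} is a genuine gap: property b) of Theorem~\ref{trm: biLip homeo could be C1 imbedding} together with compatibility of $(K',g)$ with $\C'$ only gives that $\eta_k\circ g$ and $\eta_{k+1}\circ g$ are equal or strictly ordered on each \emph{open} simplex. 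A simplex on whose interior the inequality is strict may have all of its vertices mapped into lower strata where the two functions coincide, so no vertex $w$ with $f(w)<g(w)$ exists; then the piecewise-linear interpolations $\psi_f,\psi_g$ coincide on the closed simplex, the prism degenerates, and the map $H$ of the lemma is neither well defined nor onto the band $\{f\leq z\leq g\}$ over that simplex. The paper closes exactly this gap by first passing to the barycentric subdivision of $K'$, which guarantees condition $(\sharp)$ (each new simplex lying in a face where the inequality is strict has the barycentre of that face, hence a vertex, where it is strict); some such subdivision is indispensable in your argument as well. Relatedly, the cells produced by the lemma are prisms, not simplices, so a final barycentric subdivision (harmless by Proposition~\ref{prop: substrat-preserv-WL}) is needed before you may call $\widehat K$ a simplicial complex.

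The second gap is your treatment of the full-dimensional part of $A$. You apply Theorem~\ref{trm: biLip homeo could be C1 imbedding} with $C=\overline A$, which forces $A$ to have empty interior, and you defer the case $\dim A=n$ to ``coning over $\overline A\setminus\operatorname{int}\overline A$'' or an unspecified ``product situation''; neither works as stated, since a cone over a triangulation of the topological boundary does not parametrise the interior of a general compact definable set, and the coning device of the paper's Theorem~\ref{trm: triangulation with Q} presupposes that a simplicial complex triangulating $A$ is already available. The paper avoids the issue by taking $C=\bigcup_i D_i$ to be the union of the topological boundaries $A\setminus\operatorname{int}A$, $A_i\setminus\operatorname{int}A_i$, $\overline{A}_i\setminus\operatorname{int}A_i$, which is always nowhere dense; any stratification compatible with these sets is compatible with $A,A_1,\dots,A_r$, the compact set $A$ lies in $\{(y,z): y\in A',\ \eta_1(y)\leq z\leq\eta_b(y)\}$, and its full-dimensional pieces are recovered automatically as the open prisms $(\psi_k,\psi_{k+1})|_{\triangle}$ of the same construction. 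Adopting this choice of $C$, together with the subdivision step above, repairs your proof; the remaining steps --- the compositions with $(g,\mathrm{id})$ and with $\widetilde h^{-1}$, justified by Propositions~\ref{prop: Lip=>weak-Lip}, \ref{prop: comp-weak-lip-weak-lip}, \ref{prop: iloczyn WL jest WL} and property d) --- agree with the paper.
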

\begin{proof} By the following definable $C^{\omega}$\ diffeomorphism
$$\zeta: \R^{n}\ni x\longmapsto \frac{x}{\sqrt{1+|x|^{2}}}\in \{u\in\R^{n}: |u|<1\}$$
without loss of generality we can assume that $A$\ is compact.
Observe also that it suffices to find a definable $C^{q}$\ triangulation $(K,H)$\ of the set $A$, compatible with $A_{1},...,A_{r}$\ and such that for any $\triangle\in K$
\vskip 1 mm

\begin{tabular}{l}
 a) \ $H|_{\overline{\triangle}}$\ \ is a Lipschitz mapping\footnotemark; \\
 b) \ $H|_{\overline{\triangle}}$\ is weakly bi-Lipschitz of class $C^{q}$\ on the natural simplicial stratification $\sX_{\overline{\triangle}}$\
  of the set $\overline{\triangle}$.\\
\end{tabular}

\footnotetext{It suffices to apply the following lemmas about "glueing" Lipschitz mappings: Let $A_{i}\subset\R^{p}$, $i=1,...,r$\ and  $B=\overset{r}{\underset{i=1}{\bigcup}} A_{i}$\ be a quasi-convex\footnotemark\ set, $f: B\longrightarrow \R^{m}$\ be such that for $i=1,..., r$,
$f|_{A_{i}}$\ is a Lipschitz mapping. Then $f$\ is a Lipschitz mapping. Also if $G_{1},...,G_{r}\subset\R^{n}$\ are disjoint and compact, $f: \overset{r}{\underset{i=1}{\bigcup}} G_{i}\longrightarrow \R^{m}$\ is such that $f|_{G_{i}}$\ is Lipschitz, then $f$\ is Lipschitz.}
\footnotetext{A set $A\subset\R^{n}$\ is \textit{quasi-convex}, if there exists a constant $C>0$\ such that for $x,y\in A$\ there exists
a continuous arc $\lambda: [0,1]\longrightarrow A$,\ piecewise $C^{1}$\ such that $\lambda(0)=x$, $\lambda(1)=y$\ and
$|\lambda|\leq C\cdot |x-y|$. In particular the connected components of $|K|$, where $K$\ is a simplicial complex in $\R^{n}$, are quasi-convex.}

The proof is proceeded by induction on the dimension $n$\ of the ambient space.

Cases $n=0$, $n=1$\ are trivial.

Let $n\geq 2$\ and the statement be true for $n-1$.
Denote $D_{0}=A\setminus int A$, $D_{i}= A_{i}\setminus int A_{i}$, $D_{i+r}= \overline{A}_{i}\setminus int A_{i}$, $i=1,2,...,r$.
Let $C=\underset{i=0}{\overset{2r}{\bigcup}}D_{i}$.
Observe that $C$\ is nowhere dense in $\R^{n}$\ and that any stratification of $\R^{n}$, compatible with all the sets
$D_{0},...,D_{2r}$\ is also compatible with all the sets $A,A_{1},...,A_{r}$.
\vskip 3 mm

\textbf{Step 1.} By Theorem \ref{trm: biLip homeo could be C1 imbedding} we can assume that there exists a definable $C^{q}$\ stratification $\C'$\ of
$\R^{n-1}$\ and a finite family of definable Lipschitz functions $\eta_{k}: \R^{n-1}\longrightarrow \R$\ ($k=1,...,b$)\ such that
 $\eta_{1}\leq...\leq \eta_{b}$, $C\subset\underset{k=1}{\overset{b}{\bigcup}} \eta_{k}$\ and
for any $\Gamma\in \C'$, $\eta_{k}|_{\Gamma}$\ is of class $C^{q}$\ and either $\eta_{k}|_{\Gamma}\equiv \eta_{k+1}|_{\Gamma}$\ or
$\eta_{k}|_{\Gamma}<\eta_{k+1}|_{\Gamma}$\ and the family
{\setlength\arraycolsep{2pt}
\begin{eqnarray*}
\mathcal{C}&:=&\left\{(\eta_{k},\eta_{k+1})|_{\Gamma}: \Gamma\in \mathcal{C}', \eta_{k}|_{\Gamma}<\eta_{k+1}|_{\Gamma}, k=1,...,b-1\right\}\cup
\left\{(\eta_{b},+\infty)|_{\Gamma}: \Gamma\in \mathcal{C}' \right\}\nonumber \\
& & \cup \left\{(-\infty,\eta_{1})|_{\Gamma}: \Gamma\in \mathcal{C}'\right\}\cup \left\{\eta_{k}|_{\Gamma}: \Gamma\in \mathcal{C}', k=1,...,b\right\}
\end{eqnarray*}}
is a $C^{q}$\ stratification of $\R^{n}$, compatible with $A, A_{1},...,A_{r}$. Now it suffices to construct a desired triangulation compatible with
$\C$\ for the set
$$\{(y,z)\in A'\times \R: \eta_{1}(y)\leq z\leq \eta_{b}(y)\},$$
where $A'$\ is the projection of $A$\ into $\R^{n-1}$.
\vskip 3 mm

\textbf{Step 2.} From now on we will follow the classical construction of triangulation (compare \cite{Hi2} or \cite{Co} Theorem 4.4).
By the induction hypothesis there exists a definable $C^{q}$\ triangulation $(K',h)$\ of the set
$A'$, that is compatible with the finite family $\{\Gamma\in\C':\quad \Gamma \subset A'\}$\
and such that for every simplex $\triangle\in K'$
\vskip 1 mm
\begin{tabular}{l}
 $a)$\ \ $h|_{\overline{\triangle}}$\ \  is a Lipschitz mapping; \\
 $b)$\ \ $h|_{\overline{\triangle}}$\ \ is weakly bi-Lipschitz on the natural simplicial stratification $\sX_{\overline{\triangle}}$\ \
 of the set $\overline{\triangle}$,\\
 \hskip 14 mm where $\sX_{\overline{\triangle}}=\{\triangle'\in K':\text{ }  \triangle'\subset \partial\triangle \}\cup \{\triangle\}$. \\
\end{tabular}\\
Without loss of generality, by taking the barycentric subdivision of $K'$, we can assume
(see Proposition \nolinebreak\ref{prop: substrat-preserv-WL}) that $(K', h)$\ has still the above properties $a)$\ and $b)$\ and furthermore
$$\eta_{k}\circ h|_{\overline{\triangle}} = \eta_{k+1}\circ h|_{\overline{\triangle}}\quad\text{or there is a vertex } w \text{ of }\triangle
\text{ such that }\eta_{k}\circ h(w)<\eta_{k+1}\circ h(w)$$
for any $\triangle\in K'$\ and $k=1,...,b-1$.
Thus since now without loss of generality we can assume\footnote{see Proposition\ref{prop: iloczyn WL jest WL}} that $A'=|K'|$, $h=id_{A'}$\ and
$$(\sharp)\qquad\qquad \eta_{k}|_{\overline{\triangle}}=\eta_{k+1}|_{\triangle}\quad
\text{or there is a vertex }w\text{ of }\triangle\text{ such that }\eta_{k}(w)<\eta_{k+1}(w)\qquad\qquad$$
for any $\triangle\in K'$\ and $k=1,...,b-1$.
Consider the following functions $\psi_{k}: |K'|\longrightarrow \R$:
$$\psi_{k}\left(\overset{l}{\underset{i=0}{\sum}}\beta_{i}y_{i}\right) =\overset{l}{\underset{i=0}{\sum}}\beta_{i}\cdot \eta_{k}(y_{i}),
\text{  for any  } [y_{0},...,y_{l}]\in K'\ \text{ and } \beta_{i}>0\text{ such that } \overset{l}{\underset{i=0}{\sum}}\beta_{i}=1.$$
Observe that every $\psi_{k}$\ is affine on the closure of each simplex $\triangle\in K'$\ and by $(\sharp)$\
\begin{center}$\psi_{k}<\psi_{k+1}$\ \ on  $\triangle$\ \ $\Longleftrightarrow$\ \ $\eta_{k}< \eta_{k+1}$\ \ on $\triangle$\ \ \ \ and\ \ \ \
$\psi_{k}=\psi_{k+1}$\ \ on  $\overline{\triangle}$\ \ $\Longleftrightarrow$\ \ $\eta_{k}= \eta_{k+1}$\ \ on $\overline{\triangle}$.
\end{center}
Consider the following polyhedral complex:
$$K_{p}=\{\psi_{k}|_{\triangle}, \triangle\in K', k=1,...,b\}\cup \{(\psi_{k}, \psi_{k+1})|_{\triangle}, \triangle\in K', \psi_{k}|_{\triangle}<\psi_{k+1}|_{\triangle},
k=1,...,b-1\}.$$
Put $\left|K_{p}\right|=\bigcup K_{p}$. Now we define $H: |K_{p}|\longrightarrow \{(y,z)\in A'\times\R: \eta_{1}(y)\leq z\leq \eta_{b}(y)\}$\
by the formula:
$$(\mathfrak{F})\qquad
H(y,z)= \begin{cases}(y,\eta_{k}(y)), & (y,z)\in \psi_{k}|_{\triangle}, \triangle \in K', \\
 & k=1,...,b, \\
\left( y,\frac{z-\psi_{k}(y)}{\psi_{k+1}(y)-\psi_{k}(y)}\cdot \eta_{k+1}(y)+
 \frac{\psi_{k+1}(y)-z}{\psi_{k+1}(y)-\psi_{k}(y)}\cdot \eta_{k}(y) \right), & (y,z)\in (\psi_{k}, \psi_{k+1})|_{\triangle},
  \triangle\in K', \\
  & k=1,...,b-1, \psi_{k}|_{\triangle}<\psi_{k+1}|_{\triangle}. \end{cases}$$
By Lemma \ref{lem: weakly bi-bipschitzianity on polyhedron} $H$\ is a homeomorphism, $\{H(S):\text{\ \ }S\in K_{p}\}$\ is compatible with $\C$\ and
for any $S\in K_{p}$\

\begin{tabular}{l l}
$a)$\ \ $H|_{\overline{S}}$\ \ is a Lipschitz mapping; \\
$b)$\ \ $H|_{\overline{S}}$\ \ is weakly bi-Lipschitz of class $C^{q}$\ on the natural polyhedral stratification $\sX_{\overline{S}}$\
of the set $\overline{S}$, \\ where $\sX_{\overline{S}}=\{S'\in K_{p}:\text{ }S'\subset\overline{S}\}$.
\end{tabular} \\
Finally, passing to the barycentric subdivision $K^{*}_{p}$\ of $K_{p}$\ completes the construction
 (see Proposition \ref{prop: substrat-preserv-WL}).

\end{proof}

\begin{remark} It follows that $K$\ can be always (a subfamily of) a simplicial complex in $\R^{n}$\ for a set $A\subset\R^{n}$.
\end{remark}

\begin{remark}\label{rem: lipschitz triangulation for compact sets} If $A$\ is compact, we can have $H: |K|\longrightarrow A$\ Lipschitz.
\vskip 1 mm
\end{remark}

\section{A class of triangulable regularity conditions}
\label{sec: T class and T triangulation}

Using the results from \cite{Cz}, we define a class $\T$\ of regularity conditions and prove that every definable compact set in $\R^{n}$\
has a definable $C^{q}$\ triangulation with a $\T$\ condition. We remind briefly the important notions from \cite{Cz}.

\vskip 3 mm
Let $\Q$\ be a regularity condition of pairs $(\Lambda,\Gamma)$\ at a point $x\in\Gamma$, where $\Lambda,\Gamma\subset\R^{n}$\
are $C^{q}$\ submanifolds, $\Gamma\subset\overline{\Lambda}\setminus\Lambda$.
\vskip 3 mm

\begin{definition} We say that $\mathcal{Q}$\ is \textit{local}, if for an open neighbourhood $U$\ of the point $x\in\Gamma$
\ the pair $(\Lambda, \Gamma)$\ satisfies the condition $\Q$\ at $x$\ if and only if the pair $(\Lambda\cap U,\Gamma\cap U)$\ satisfies the condition
$\Q$\ at the point $x$.
\end{definition}
Since now we consider only these conditions, that are local. We also set the following notation:
\begin{center}
\begin{tabular}{l}
$\W^{\mathcal{Q}}(\Lambda,\Gamma,x)=$\ the condition $\Q$\ is satisfied for the pair $(\Lambda,\Gamma)$\ at the point $x\in\Gamma$.\\
$\W^{\Q}(\Lambda,\Gamma)=$\ for any point $x\in\Gamma$\ we have $\W^{\mathcal{Q}}(\Lambda,\Gamma,x)$. \\
$\sim\W^{\Q}(\Lambda, \Gamma,x)=$\ the pair $(\Lambda,\Gamma)$\ does not satisfy $\Q$\ at the point $x$.\\
\end{tabular}
\end{center}
In the natural way we may define a stratification with the $\Q$\ condition.
\begin{definition}\label{def: Q stratification} Let $\Q$\ be a regularity condition, $A\subset\R^{n}$\ be a set.
A $C^{q}$\ \textit{stratification with the}\ \textit{condition}\ $\Q$\ (or a $\Q$\ \textit{stratification of class}
$C^{q}$) of the set $A$\ is a $C^{q}$\ stratification $\sX^{\Q}_{A}$\ such that for any two strata $\Lambda$,
$\Gamma\in\sX^{\Q}_{A}$, $\Gamma\subset\overline{\Lambda}\setminus\Lambda$,\ we have $\W^{Q}(\Lambda,\Gamma)$.
\end{definition}
In the next part of this section we focus on describing common features of regularity conditions. Some of them are well-known features of Whitney's conditions but
some seem to be new (the lifting property, the projection property, the conical property).

\begin{definition}\label{def: defin cond}
Let $\Q$\ be a regularity condition. We say that $\Q$\ is \textit{definable}, if for any definable
$C^{q}$\ submanifolds $\Gamma,\Lambda\subset\R^{n}$, $\Gamma\subset\overline{\Lambda}\setminus\Lambda$, the set
$$\{x\in\Gamma:\quad \W^{\Q}(\Lambda,\Gamma,x)\} $$
is definable.
\end{definition}

\begin{definition}\label{def: generic cond}
Let $\mathcal{Q}$\ be a definable regularity condition. We say that $\mathcal{Q}$\ is \textit{generic}, if for any definable
$C^{q}$\ submanifolds $\Lambda,\Gamma\subset\R^{n}$, $\Gamma\subset\overline{\Lambda}\setminus\Lambda$, the set
$$\{x\in\Gamma:\quad \sim\W^{\Q}(\Lambda,\Gamma,x)\}$$
is nowhere dense in $\Gamma$.
\end{definition}

\begin{remark}\label{rem: generic for 0 dimensional} If $\Q$\ is a definable and generic condition, then for any two definable $C^{q}$\ submanifolds
$\Lambda$, $\Gamma\subset \R^{n}$, such that $\Gamma\subset\overline{\Lambda}\setminus\Lambda$\ and $\dim \Gamma=0$, we have always $\W^{q}(\Lambda,\Gamma)$.
\end{remark}

\begin{definition}\label{def: invariant condition Q} Let $\Q$\ be a regularity condition. We say that $\Q$\ is $C^{q}$\ \textit{invariant} (or \textit{invariant under
} $C^{q}$\ \textit{diffeomorphisms}), if for any $C^{q}$\ submanifolds $\Lambda$, $\Gamma\subset\R^{n}$, $\Gamma\subset\overline{\Lambda}\setminus\Lambda$,\ $\dim\Gamma<\dim\Lambda$
and for any point $x\in\Gamma$, if $U$\ is an open neighbourhood of $x$\ and
$\phi: U \longrightarrow \R^{m}$\ is a $C^{q}$\ embedding, then
$$\W^{Q}(\Lambda,\Gamma,x)\quad \Longleftrightarrow \quad \W^{\Q}(\phi(\Lambda\cap U), \phi(\Gamma\cap U),\phi(x)).$$
\end{definition}

\begin{definition}
Let $A\subset \R^{n}$\ and let $f: A\longrightarrow \R^{m}$\ be a continuous mapping, $\sX_{A}$\ be a $C^{q}$\ stratification of the
set $A$\ such that $f|_{\Gamma}$\ is of class $C^{q}$\ for all $\Gamma\in\sX_{A}$.
Then by the \emph{induced} $C^{q}$\ \emph{stratification} of the $graph f$, we will mean the following:
$$\sX_{graph f}(\sX_{A})=\{graph f|_{\Gamma}:\quad \Gamma\in\sX_{A}\}.$$
\end{definition}

\begin{definition}\label{def: property proj weakly lipsch}
We say that a condition $\Q$\ has \textit{the projection property with respect to weakly Lipschitz mappings of class }$C^{q}$\
if for any $C^{q}$\ mapping $f: A\longrightarrow \R^{m}$\ weakly Lipschitz on a $C^{q}$\ stratification $\sX_{A}$\ of a set $A\subset\R^{n}$, we have
$$\sX_{graph\text{ }f}\left(\sX_{A}\right)\text{\ is a\ }\Q\text{\ -\ stratification}\quad\Longrightarrow \quad \sX_{A}\text{\ is a\ }\Q\text{\ -\ stratification}.$$
\end{definition}

\begin{definition}\label{def: property lifting by Lipschitz}
We say that a condition $\Q$\ has \textit{the lifting property with respect to locally Lipschitz mappings of class }$C^{q}$\ if
for any two $C^{q}$\ submanifolds $\Lambda,\Gamma\subset\R^{n}$\ such that $\Gamma\subset\overline{\Lambda}\setminus\Lambda$,
and for any locally Lipschitz mapping $f: \Lambda\cup\Gamma\longrightarrow \R^{m}$\ such that the restrictions
$f|_{\Lambda}$, $f|_{\Gamma}$\ are of class $C^{q}$\ and for any $C^{q}$\ submanifolds $M,N\subset\R^{n}$\ such that
$M,N\subset\Lambda\cup\Gamma$, $N\subset\overline{M}\setminus M$\ and $\{M,N\}$\ is
  compatible with $\{\Lambda,\Gamma\}$, we have
  $$\W^{\Q}(M,N), \W^{\Q}(graph f|_{\Lambda},graph f|_{\Gamma}) \quad \Longrightarrow \quad \W^{\Q}(graph f|_{M},graph f|_{N}).$$
\end{definition}

\begin{definition}\label{def: WL condition} We say that a regularity condition $\Q$\ is a $\mathcal{WL}$\ \textit{condition}\
\textit{of class}\ $C^{q}$, if and only if it has all the following properties: \\
- the definability ; \\
- the genericity ; \\
- the invariance under definable $C^{q}$\ diffeomorphisms ;\\
- the projection property with respect to weakly Lipschitz mappings of class $C^{q}$; \\
- the lifting property with respect to locally Lipschitz mappings of class $C^{q}$.
\end{definition}

The class $\mathcal{WL}$\ and properties of weakly Lipschitz mappings were widely discussed in \cite{Cz}.
The main theorem of \cite{Cz} states that the $\WL$\ conditions are invariant under weakly Lipschitz mappings in the following sense:

\begin{theorem}(Invariance of the $\WL$\ conditions under definable, locally Lipschitz, weakly bi-Lipschitz homeomorphisms)
\label{trm: Q invariance}\hskip 1 mm
Let $\Q$\ be a regularity $\mathcal{WL}$\ condition of class $C^{q}$, where $q\in\N\cup\{\infty,\omega\}$\ and $q\geq 1$.
Let $B\subset \R^{n}$\ be a definable set and consider
a definable homeomorphic embedding $f: B\longrightarrow \R^{m}$, that is weakly bi-Lipschitz of class $C^{q}$\ on
definable $C^{q}$\ stratification $\sX_{B}$. Assume additionally that for any two sumbanifolds $\Lambda$, $\Gamma\in \sX_{B}$\
such that $\Gamma\subset\overline{\Lambda}\setminus\Lambda$,\ the mapping $f|_{\Lambda\cup\Gamma}$\ is locally Lipschitz.

Then there exists  a definable $C^{q}$\ stratification $\sX_{B}'$\ of the set $B$, compatible with $\sX_{B}$\ and such that
 $$\{\Gamma\in\sX_{B}:\quad \dim\Gamma=\dim B\}=\{\Gamma'\in\sX_{B}':\quad \dim\Gamma'=\dim B\}$$
and the condition $\Q$\ is invariant with respect to the pair $(f,\sX_{B}')$\ in the following sense
\vskip 2 mm
 for any definable $C^{q}$\ submanifolds $M,N\subset B$\ such that $N\subset\overline{M}\setminus M$\
and $\{M,N\}$\ are compatible with the stratification $\sX_{B}'$\
$$\W^{\Q}(M,N)\quad\Longrightarrow\quad \W^{\Q}(f(M),f(N)).$$
\end{theorem}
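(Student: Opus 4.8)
The plan is to use the graph of $f$ as a bridge between $B$ and $f(B)$: first transfer $\W^{\Q}(M,N)$ upward to $\W^{\Q}(graph\,f|_{M},graph\,f|_{N})$ by the lifting property of $\Q$, and then bring it down to $\W^{\Q}(f(M),f(N))$ by the projection property of $\Q$. The stratification $\sX_{B}'$ will be produced by turning the graph of $f$, with its induced stratification, into a $\Q$-stratification and pulling it back to $B$.

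\emph{Construction of $\sX_{B}'$.} Put $G=graph\,f\subset\R^{n}\times\R^{m}$, with the induced $C^{q}$ stratification $\sX_{graph\,f}(\sX_{B})=\{graph\,f|_{\Lambda}:\Lambda\in\sX_{B}\}$. Since $\Q$ is definable and generic, $G$ admits a definable $C^{q}$ stratification $\W$ which is a $\Q$-stratification, is compatible with $\sX_{graph\,f}(\sX_{B})$, and subdivides none of its strata of dimension $\dim B$ --- the usual downward construction of a $\Q$-stratification refining a given one (cf.\ \cite{Cz}) only ever refines strata lying in the frontier of a higher-dimensional stratum. The coordinate projection $p\colon G\to B$, $(x,f(x))\mapsto x$, is a homeomorphism whose restriction to each $graph\,f|_{\Lambda}$ is a $C^{q}$ diffeomorphism onto $\Lambda$ (the inverse $x\mapsto(x,f(x))$ is of class $C^{q}$ there, because $f|_{\Lambda}$ is). Hence $\sX_{B}':=\{p(W):W\in\W\}$ is a definable $C^{q}$ stratification of $B$, compatible with $\sX_{B}$ and having the same strata of dimension $\dim B$; and since $graph\,f|_{p(W)}=W$ for every $W\in\W$, the induced stratification $\sX_{graph\,f}(\sX_{B}')$ equals $\W$, hence is a $\Q$-stratification. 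By Proposition~\ref{prop: substrat-preserv-WL}, $f$ remains weakly Lipschitz of class $C^{q}$ on $\sX_{B}'$ and $f^{-1}$ on $f\sX_{B}'$; combining this with the $C^{q}$-invariance of $\Q$ under the coordinate swap and with the projection property, one also gets that $\sX_{B}'$ and $f\sX_{B}'$ are themselves $\Q$-stratifications. Finally, if $\Lambda',\Gamma'\in\sX_{B}'$ with $\Gamma'\subset\overline{\Lambda'}\setminus\Lambda'$, then $\Lambda',\Gamma'$ are contained in strata of $\sX_{B}$ which are either equal or similarly incident, so $f|_{\Lambda'\cup\Gamma'}$ is locally Lipschitz, either by hypothesis or because $f$ is $C^{q}$ on a single stratum.

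\emph{The implication.} Let $M,N\subset B$ be definable $C^{q}$ submanifolds with $N\subset\overline{M}\setminus M$, $\{M,N\}$ compatible with $\sX_{B}'$, and $\W^{\Q}(M,N)$; I want $\W^{\Q}(f(M),f(N))$. Using the locality of $\Q$ and Proposition~\ref{prop: WL wyst sprawdzac na najwiekszych parach}, I would reduce to the basic case in which $M$ is contained in a single stratum $\Lambda'$ and $N$ in a single stratum $\Gamma'$ of $\sX_{B}'$ (this holds verbatim when $M,N$ are strata of a stratification refining $\sX_{B}'$, which is the situation in the intended application); the frontier condition then forces $\Gamma'\subset\overline{\Lambda'}\setminus\Lambda'$. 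Since $\sX_{graph\,f}(\sX_{B}')$ is a $\Q$-stratification we have $\W^{\Q}(graph\,f|_{\Lambda'},graph\,f|_{\Gamma'})$, the map $f|_{\Lambda'\cup\Gamma'}$ is locally Lipschitz, and $\{M,N\}$ is compatible with $\{\Lambda',\Gamma'\}$; so the lifting property of $\Q$ gives $\W^{\Q}(graph\,f|_{M},graph\,f|_{N})$. Swapping the two coordinate blocks of $\R^{n}\times\R^{m}$ is a linear isometry, so by the $C^{q}$-invariance of $\Q$ this is the same as $\W^{\Q}(graph\,g|_{f(M)},graph\,g|_{f(N)})$, where $g:=f^{-1}\colon f(B)\to\R^{n}$. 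Since $M\subset\Lambda'$ and $f|_{\Lambda'}$ is a $C^{q}$ embedding, $f|_{M}$ and $f|_{N}$ are $C^{q}$ diffeomorphisms onto $f(M)$ and $f(N)$; moreover $g|_{f(M)\cup f(N)}=(f|_{M\cup N})^{-1}$ is weakly Lipschitz of class $C^{q}$ on $\{f(M),f(N)\}$ by Proposition~\ref{prop: weak-lip-condition-for-inverse-mapping}, the positivity of the $\liminf$ being inherited, through the inclusions $M\subset\Lambda'$, $N\subset\Gamma'$, from the weak Lipschitzianity of $f^{-1}$ on $f\sX_{B}'$ (another application of Proposition~\ref{prop: weak-lip-condition-for-inverse-mapping}). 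Applying the projection property of $\Q$ to $g$ on the stratification $\{f(M),f(N)\}$ then yields $\W^{\Q}(f(M),f(N))$.

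\emph{The main obstacle.} The delicate point is the localization at the start of the implication step: the hypothesis only says $\{M,N\}$ is compatible with $\sX_{B}'$, so a priori $M$ and $N$ may be unions of several strata, whereas the lifting and projection properties are phrased for a single pair of adjacent strata. Bringing the general case down to that one --- via the locality of $\Q$, the frontier condition, Proposition~\ref{prop: WL wyst sprawdzac na najwiekszych parach}, and, where it is really needed, the hypothesis $\W^{\Q}(M,N)$ itself --- and, hand in hand with this, verifying the weak-Lipschitz hypotheses of those properties for every restricted map that comes up, is where the genuine work lies; the rest is bookkeeping, the only external input being the existence of $\Q$-stratifications refining a given stratification, taken from \cite{Cz}.
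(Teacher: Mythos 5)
The paper does not actually prove Theorem \ref{trm: Q invariance}: its ``proof'' is the citation to \cite{Cz}, Theorem 3.15, so there is no internal argument to compare yours against. On its own merits, your mechanism is exactly what the $\WL$ axioms are built for and is almost certainly the intended one: obtain $\sX_{B}'$ by pulling back, through the projection of the graph of $f$ onto $B$, a definable $\Q$-stratification of the graph refining the induced one (definability and genericity give this by the usual downward refinement, which indeed never subdivides the top-dimensional strata, since for each adjacent pair the failure set sits inside the lower-dimensional stratum); then, for $M\subset\Lambda'$, $N\subset\Gamma'$ with $\Lambda',\Gamma'\in\sX_{B}'$, transfer $\W^{\Q}(M,N)$ up to the graph by the lifting property and back down to $(f(M),f(N))$ by the projection property applied to $f^{-1}$ after the coordinate swap, the swap being harmless by $C^{q}$-invariance. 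The auxiliary verifications you sketch (local Lipschitzness of $f|_{\Lambda'\cup\Gamma'}$ via the frontier condition of $\sX_{B}$, weak Lipschitzness of $f^{-1}$ on the restricted pairs via Propositions \ref{prop: substrat-preserv-WL} and \ref{prop: weak-lip-condition-for-inverse-mapping}) are correct.

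The genuine gap is the one you flag yourself and do not close: the reduction from arbitrary $M,N$ compatible with $\sX_{B}'$ to the case where each lies in a single stratum. The tools you name will not do it: Proposition \ref{prop: WL wyst sprawdzac na najwiekszych parach} is about weak Lipschitzness, not about $\Q$, and for an abstract condition there is no principle that $\W^{\Q}(M,N)$, or its verification, decomposes along a partition of $M$ and $N$ into strata --- in particular, if $M$ is a union of strata of different dimensions, the geometry of $M$ along a lower-dimensional stratum (its tangent spaces there, say) is not recorded by that stratum, so stratum-wise statements are neither implied by nor sufficient for the pair $(M,N)$; the same issue touches your use of the lifting property, whose compatibility hypothesis you read as $M\subset\Lambda$, $N\subset\Gamma$, which is not the paper's literal definition of ``compatible''. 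So as written your argument establishes the theorem only under the reading in which $M$ and $N$ are each contained in a single stratum of $\sX_{B}'$ --- which, to be fair, is exactly the case consumed in Step 5 of the proof of Theorem \ref{trm: triangulation with Q}, and may well be the meaning intended in \cite{Cz} --- but the statement as printed is not fully covered, and bridging that (or pinning down the precise hypothesis) is more than bookkeeping.
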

\begin{proof} See \cite{Cz}, Theorem 3.15.
\end{proof}

However, in order to prove the triangulation theorem, we have to narrow the class $\WL$\ of regularity conditions by imposing an extra condition.

\begin{definition}\label{def: conical property} Let $\Q$\ be a regularity condition. We say that $\Q$\ has
 \textit{the conical property of class } $C^{q}$, if for any $n\in\N$, any affine subspace $V\subset\R^{n}$\ and any $C^{q}$\ submanifolds $M$, $N\subset V$\ such that
$N\subset\overline{M}\setminus M$\ and for any point $c\in\R^{n}\setminus V$\ we have
$$\W^{\Q}(M, N)\quad \Longrightarrow \quad \begin{tabular}{l}a) $\W^{\Q}\left(c\ast M,M\right)$, $\W^{\Q}\left(c\ast N,N\right)$; \\
 b) $\W^{\Q}\left(c\ast M, c\ast N\right)$ ; \\ c) $W^{\Q}\left(c\ast M, N\right)$. \\ \end{tabular}$$
\end{definition}

\begin{remark}\label{rem: conical property 1} Observe that for any affine subspace $S\subset\R^{n}$\ and for any point $c\in\R^{n}\setminus S$\ the
mapping
$$\varphi: S\times (0;+\infty)\ni (x,t)\longmapsto (1-t)\cdot c + t\cdot x \in\R^{n}$$
is a $C^{q}$\ embedding for each $q\in\N\cup\{\infty, \omega\}$, $q\geq 1$.
Therefore, if a condition $\Q$\ is invariant under
definable $C^{q}$\ diffeomorphisms (e.g. when $\Q$\ is a $\WL$\ condition of class $C^{q}$), then it has the conical property of class $C^{q}$\ if and only if
for any $C^{q}$\ submanifolds $M$, $N\subset \R^{n}$\ such that $N\subset\overline{M}\setminus M$\ we have
$$\W^{\Q}(M, N)\quad \Longrightarrow \quad \begin{tabular}{l}a) $\W^{\Q}\left(M\times (0;1),M\times\{1\}\right)$, $\W^{\Q}(N\times(0;1),N\times\{1\})$; \\
 b) $\W^{\Q}(M\times(0;1), N\times(0;1))$ ; \\ c) $W^{\Q}( M\times(0;1), N\times \{1\})$. \\ \end{tabular}$$
\end{remark}

Now we can describe a class of triangulable conditions.

\begin{definition} A regularity condition $\Q$\ is called a \textit{triangulable}\ $C^{q}$\ \textit{condition}, if it is a $\WL$\ condition of
class $C^{q}$\ and it has the conical property of class $C^{q}$. Let $\T$\ denotes the class of triangulable conditions.
\end{definition}

In Section \ref{sec: Whitney Verdier are T} we will prove that the Whitney (B) and the Verdier conditions belong to the class $\T$.
Now we will prove a general theorem about definable $C^{q}$\ triangulation with a triangulable condition.

\begin{theorem}\label{trm: triangulation with Q}(Definable, locally Lipschitz triangulation with a triangulable condition)
Let $\Q$\ be a trian\-gu\-lab\-le condition of class $C^{q}$, where $q\in\N\cup\{\infty,\omega\}$\ and $q\geq 1$. Let $A\subset \R^{n}$\ be
a definable set and $A_{1},...,A_{r}$\ be definable subsets of $A$.
Then there exists a definable $C^{q}$\ triangulation $(K,H)$\ of $A$,\ such that the family $\{H(\triangle):\text{   }\triangle\in K\}$\
is a definable $C^{q}$\ stratification with the condition $\Q$\ of the set $A$\ compatible
with $A_{1},...,A_{r}$\ and $H: |K|\longrightarrow A$\ is a locally Lipschitz mapping.
\end{theorem}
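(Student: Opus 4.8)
The plan is to reduce Theorem \ref{trm: triangulation with Q} to the combination of Theorem \ref{trm: Weakly_lipschitz_triang} and Theorem \ref{trm: Q invariance}, using the conical property to handle the gap in dimensions. First I would invoke Theorem \ref{trm: Weakly_lipschitz_triang} to obtain a definable $C^{q}$\ triangulation $(K,H)$\ of $A$, compatible with $A_{1},\dots,A_{r}$, such that $H$\ is locally Lipschitz and weakly bi-Lipschitz of class $C^{q}$\ on the natural simplicial stratification $K$\ of $|K|$. The remaining task is to show that, possibly after modifying $H$\ on the top-dimensional simplices (or refining $K$), the stratification $\{H(\triangle):\triangle\in K\}$\ satisfies the regularity condition $\Q$. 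The natural strategy is: on the abstract polyhedron side, the condition $\Q$\ holds for the natural simplicial stratification because of the conical property; then transport $\Q$\ across $H$\ using Theorem \ref{trm: Q invariance}.

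Next I would verify that $\{S : S\in K\}$\ is itself a $\Q$-stratification, i.e.\ that for any two simplices $\triangle_{1},\triangle_{2}\in K$\ with $\triangle_{2}\subset\overline{\triangle_{1}}\setminus\triangle_{1}$\ we have $\W^{\Q}(\triangle_{1},\triangle_{2})$. This is where the conical property enters: working by induction on $\dim K$\ and using that each top simplex of the barycentric subdivision is of the form $0_{\triangle}\ast\triangle'$\ with $\triangle'$\ a simplex of one lower dimension, one reads off the incidence pairs of $K^{*}$\ as cones $c\ast M$\ over lower pairs $(M,N)$, and parts a), b), c)\ of Definition \ref{def: conical property}\ are exactly what is needed to propagate $\W^{\Q}$\ from the $(d-1)$-skeleton to dimension $d$. (For $0$-dimensional strata Remark \ref{rem: generic for 0 dimensional}\ gives $\W^{\Q}$\ for free, being $\Q$\ generic and definable.) Here I would also need that the triangulation can be chosen with affinely-situated simplices, which it is by construction in Theorem \ref{trm: Weakly_lipschitz_triang}; the conical property is stated for submanifolds of an affine subspace $V$\ with vertex $c\notin V$, which is precisely the simplicial situation.

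Then I would apply Theorem \ref{trm: Q invariance} with $B=|K|$, $f=H$, and $\sX_{B}=K$. The hypotheses are met: $H$\ is definable, weakly bi-Lipschitz of class $C^{q}$\ on $K$\ by Theorem \ref{trm: Weakly_lipschitz_triang}, and for any incident pair $\triangle_{2}\subset\overline{\triangle_{1}}\setminus\triangle_{1}$\ the restriction $H|_{\overline{\triangle_{1}}}$\ is Lipschitz (hence $H|_{\triangle_{1}\cup\triangle_{2}}$\ is locally Lipschitz), again from Theorem \ref{trm: Weakly_lipschitz_triang}. This yields a definable $C^{q}$\ stratification $\sX_{B}'$\ of $|K|$, compatible with $K$, with the same top-dimensional strata, such that $\W^{\Q}(M,N)\Rightarrow\W^{\Q}(H(M),H(N))$\ for all definable $C^{q}$\ submanifolds $M,N\subset B$\ compatible with $\sX_{B}'$. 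Since $\sX_{B}'$\ refines $K$\ and agrees with it in top dimension, $\sX_{B}'$\ is itself a simplicial (or at least cell) stratification whose cells are unions-of-simplices-refined; passing to a compatible triangulation $(\widetilde K,\widetilde H)$\ (using $vdD$-triangulability, or a barycentric refinement), the simplices of $\widetilde K$\ are compatible with $\sX_{B}'$, so $\W^{\Q}$\ holds for every incident pair of $\widetilde K$\ by Step 2, and therefore $\W^{\Q}(\widetilde H(\triangle_{1}),\widetilde H(\triangle_{2}))$\ holds for every incident pair, i.e.\ $\{\widetilde H(\triangle):\triangle\in\widetilde K\}$\ is a $\Q$-stratification. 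Compatibility with $A_{1},\dots,A_{r}$\ is preserved because $\sX_{B}'$\ refines $K$, which was compatible with the $H^{-1}(A_{i})$, and local Lipschitzness of $\widetilde H$\ is inherited from $H$\ composed with the (piecewise-affine, hence Lipschitz on each closed simplex) refinement map.

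The main obstacle I expect is the bookkeeping in the final step: Theorem \ref{trm: Q invariance}\ only outputs a stratification $\sX_{B}'$, not a triangulation, and it is not a priori a simplicial subdivision of $K$. One must argue carefully that re-triangulating $|K|$\ compatibly with $\sX_{B}'$\ (while keeping the top-dimensional cells, which coincide with open simplices of $K$) can be done so that \emph{every} incident simplex pair of the new complex is compatible with $\sX_{B}'$, so that the implication $\W^{\Q}(M,N)\Rightarrow\W^{\Q}(H(M),H(N))$\ applies to it; and simultaneously that the new triangulation still satisfies the conical-property argument of Step 2 on the source side. Concretely one takes the barycentric subdivision fine enough and uses that subdivision maps are piecewise affine to keep $H$\ locally Lipschitz, and uses Proposition \ref{prop: substrat-preserv-WL}\ to keep the weak bi-Lipschitz property under passing to a compatible refinement. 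Everything else (definability, compactness reduction via $\zeta$\ when $A$\ is non-compact, compatibility with the $A_{i}$) is routine and can be organized exactly as in the proof of Theorem \ref{trm: Weakly_lipschitz_triang}.
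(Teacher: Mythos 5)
Your Steps 1--2 coincide with the paper's Steps 1--2: triangulate $A$ by Theorem \ref{trm: Weakly_lipschitz_triang} and then apply Theorem \ref{trm: Q invariance} with $f=H$, $B=|K|$, $\sX_{B}=K$ to obtain the refined stratification $\sX_{B}'$. The genuine gap is your last step, i.e.\ bridging the mismatch between the simplicial stratification $K$ and the finer $\sX_{B}'$, over which one has no control below the top dimension. Both of your proposed fixes fail. A barycentric (or any straight, piecewise-linear) subdivision of $K$ can never be made compatible with $\sX_{B}'$, whose lower-dimensional strata are in general curved definable submanifolds of the skeleton, so ``fine enough'' has no meaning here: compatibility is exact, not approximate. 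If instead you re-triangulate $|K|$ compatibly with $\sX_{B}'$ by the ordinary definable triangulation theorem, you obtain a definable homeomorphism $\rho:|\widetilde K|\longrightarrow|K|$, and the pairs that must be fed into Theorem \ref{trm: Q invariance} are the \emph{curved} images $\left(\rho(\triangle_{1}),\rho(\triangle_{2})\right)$ inside $|K|$, not the straight simplices of $\widetilde K$; your Step 2 (the conical property for straight simplicial pairs) says nothing about them, and $\Q$ is not invariant under arbitrary definable homeomorphisms --- producing a triangulation compatible with a prescribed stratification whose incident image pairs satisfy $\Q$ is precisely the statement of the theorem in lower dimension. Moreover $\rho$ need not be Lipschitz, so the local Lipschitzness of $\widetilde H=H\circ\rho$ is also lost.

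What is missing is the induction on $d=\dim A$ together with a conical extension, which is how the paper closes exactly this gap. After its Steps 1--2 it applies the induction hypothesis to the $(d-1)$-skeleton $\left|K_{1}^{(d-1)}\right|$ with the family $\sX'_{\left|K_{1}^{(d-1)}\right|}$, obtaining a Lipschitz $\Q$-triangulation $(K_{2},h_{2})$ of the skeleton compatible with that stratification; then it builds a new complex $K_{3}$ by adjoining one vertex for each $d$-simplex of $K_{1}$ and extends $h_{2}$ conically, sending the new vertex to the barycentre $0_{\triangle_{j}}$ (the map $h_{3}$). The conical property, together with Remark \ref{rem: generic for 0 dimensional}, is used precisely there: it shows that the cone strata $0_{\triangle_{j}}\ast h_{2}(\cdot)$ together with the skeleton strata form a $\Q$-stratification of $|K_{1}|$ whose members are adapted to $\sX'_{|K_{1}|}$, so that Theorem \ref{trm: Q invariance} can finally transport $\Q$ through $h_{1}$; the answer is $(K_{3},h_{1}\circ h_{3})$, with Lipschitzness read off from the explicit conical formula rather than from a re-triangulation map. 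Your use of the conical property --- to check that incident pairs of straight simplices satisfy $\Q$ --- is plausible but is not where the property is actually needed, and without the dimension induction your argument has no mechanism for producing a $\Q$-triangulation compatible with $\sX_{B}'$ on the lower-dimensional part.
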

\begin{proof} As in the proof of Theorem \ref{trm: Weakly_lipschitz_triang}, without loss of generality we can
assume that $A$\ is compact. Then we will prove the conclusion with $H$\ a Lipschitz mapping.
We proceed by induction on dimension of the set $A$. Let $\dim A = d$.

The case $d =0$\ is trivial. In case $d=1$\ to get the desired triangulation it suffices to
apply Theorem \ref{trm: Weakly_lipschitz_triang} (Corollary \ref{rem: lipschitz triangulation for compact sets}).
Let $d >1$\ and the theorem be true for the sets of dimension $\leq d-1$.
\vskip 2 mm

\textbf{Step 1.} By Theorem \ref{trm: Weakly_lipschitz_triang} (Corollary \ref{rem: lipschitz triangulation for compact sets}) we find
a definable $C^{q}$\ triangulation $(K_{1}, h_{1})$\ of the set $A$,
compatible with $A_{1},..., A_{r}$, Lipschitz and weakly bi-Lipschitz of class $C^{q}$\ on the simplicial stratification
$K_{1}$\ of the polyhedron $|K_{1}|$.
\vskip 2 mm

\textbf{Step 2.} By Theorem \ref{trm: Q invariance}, where we put $f=h_{1}, B=|K_{1}|, \sX_{B}=K_{1}$,
we find a definable $C^{q}$\ substratification $\sX'_{|K_{1}|}$\ of the polyhedron $|K_{1}|$\ that is compatible with  $K_{1}$\
 and such that the condition $\Q$\ and the pair $\left(h_{1},\sX'_{|K_{1}|}\right)$\ satisfy the conclusion of Theorem \ref{trm: Q invariance}.
In particular
$$\{\Gamma\in\sX'_{|K_{1}|}:\quad \dim\Gamma=d\}=\{\triangle\in K_{1}:\quad \dim\triangle=d\}.$$
Observe also, that the following family of definable $C^{q}$\ submanifolds
$$\sX'_{\left|K^{(d-1)}_{1}\right|}=\left\{\Gamma\in\sX'_{|K_{1}|}: \dim \Gamma\leq d-1\right\}$$
is a finite definable $C^{q}$\ stratification of the polyhedron $\left|K^{(d-1)}_{1}\right|$, compatible with
its natural simplicial stratification
$$K^{(d-1)}_{1}=\left\{\triangle\in K_{1}:\text{  }\dim \triangle\leq d-1 \right\}.$$
\vskip 2 mm

\textbf{Step 3.} Consider the polyhedron  $\left|K^{(d-1)}_{1}\right|$\ and its stratification $\sX'_{\left|K^{(d-1)}_{1}\right|}$.
As $\dim \left|K^{(d-1)}_{1}\right|<d$, then by the induction hypotesis there exists a definable $C^{q}$\ triangulation $(K_{2},h_{2})$\
of the polyhedron $\left|K^{(d-1)}_{1}\right|$,\ such that the family $\{h_{2}(\triangle):\text{   }\triangle\in K_{2}\}$\
forms a definable $C^{q}$\ stratification with the condition $\Q$\ of the set $\left|K^{(d-1)}_{1}\right|$,
compatible with the stratification $\sX'_{\left|K^{(d-1)}_{1}\right|}$\ and
$h_{2}: \left|K_{2}\right|\longrightarrow \left|K^{(d-1)}_{1}\right|$\ is a Lipschitz mapping.
\vskip 2 mm

\textbf{Step 4.} We define\footnote{Compare the construction in \cite{DG} Theorem 4.2, also \cite{ES} Example 9.5.3.} now a new simplicial complex $K_{3}$\ :
\vskip 2 mm
Let $\{a_{1},...,a_{\alpha}\}$\ be the set of all vertexes of $K_{2}$\ and let $\{\triangle_{\alpha+1},...,\triangle_{T}\}$\ be the set of all
$d$\ - dimensional simplexes of the complex $K_{1}$. Denote by $\{0_{\triangle_{\alpha+1}},...,0_{\triangle_{T}}\}$\ the set of
the barycentres of the simplexes $\triangle_{\alpha+1},...,\triangle_{T}$.
First we define the set of vertexes of $K_{3}$:
$$Vert(K_{3})=\{e_{1},...,e_{\alpha}, e_{\alpha+1},...,e_{T}\},$$
where $e_{j}$, $j=1,...,T$\ are the vertexes in $\R^{T}$\ of a standard $T-1$\ dimensional simplex
$$\triangle^{T-1}=[e_{1},...,e_{\alpha}, e_{\alpha+1},...,e_{T}].$$
Now we define the simpicial complex $K_{3}$\ as the following subcomplex of the natural simplicial complex
$K_{\triangle^{T-1}}=\left\{[e_{\delta_{1}},...,e_{\delta_{s}}]:\quad \delta_{1},...,\delta_{s}\in\{1,...,T\}\right\}$:
$$
\begin{tabular}{l c l}
$\triangle\in K_{3}$ & $\Longleftrightarrow$ &$ \triangle =[e_{\beta_{1}},...,e_{\beta_{s}}]\quad \text{for some}
\quad [a_{\beta_{1}},...,a_{\beta_{s}}]\in K_{2}$, \\
& or & $\triangle =[e_{j},e_{\beta_{1}},...,e_{\beta_{s}}]\quad \text{for some}\quad [a_{\beta_{1}},...,a_{\beta_{s}}]\in K_{2}\text{  and  }
\triangle_{j}, \text{ where }j\in\{\alpha+1,...,T\} \text{ is }$\\
&& \hskip 8,5 cm $\text{ such that }h_{2}([a_{\beta_{1}},...,a_{\beta_{s}}])\subset\overline{\triangle}_{j}$, \\
& or & $\triangle =[e_{j}]\quad\text{for some}\quad j=\alpha+1,...,T$.
\end{tabular}$$
Observe that $K_{3}$\ is a $d$\ - dimensional simplicial complex in $\R^{T}$\ and there exists a $d-1$\ dimensional subcomplex $L\subset K_{3}$:
$$L=\{[e_{\beta_{1}},..., e_{\beta_{s}}]:\quad [a_{\beta_{1}},...,a_{\beta_{s}}]\in K_{2}\},$$
simplicially isomorphic to $K_{2}$\ by the following semilinear homeomorphism $f: |L|\longrightarrow|K_{2}|$:
$$f\left(\underset{j=1}{\overset{s}{\sum}}\lambda_{j}\cdot e_{\beta_{j}}\right)=\underset{j=1}{\overset{s}{\sum}}\lambda_{j}\cdot a_{\beta_{j}},$$
for any $\underset{j=1}{\overset{s}{\sum}}\lambda_{j}\cdot e_{\beta_{j}}\in [e_{\beta_{1}},...,e_{\beta_{s}}]\in L$.
Then $\left(L, h_{2}\circ f\right)$\
is still a definable $C^{q}$\ triangulation of $\left|K_{1}^{(d-1)}\right|$\ with the condition $\Q$, compatible with 
$\sX'_{\left|K_{1}^{(d-1)}\right|}$\ and $h_{2}\circ f$\ is also a Lipschitz mapping.
\vskip 2 mm

\textbf{Step 5.} Now we extend the mapping $h_{2}\circ f$\ on the polyhedron $\left|K_{3}\right|$\ in a conical way:
$$h_{3}: \left|K_{3}\right|\longrightarrow \left|K_{1}\right|$$
$$h_{3}(z)=\begin{cases}h_{2}\circ f(z), & \text{ when }z\in\triangle',\text{   } \triangle'\in L,\text{   } \\
(1-t)\cdot 0_{\triangle_{j}}+t\cdot h_{2}\circ f(x), & \text{ when }z\in \triangle, \text{ }  \triangle\in K_{3}\setminus L,\text{  }\triangle=[e_{j},e_{\beta_{1}},...,e_{\beta_{s}}],
\\
 & z=(1-t)\cdot e_{j}+t\cdot x,\text{ } x\in [e_{\beta_{1}},...,e_{\beta_{s}}], \text{ } t\in(0,1),\\
0_{\triangle_{j}}, & \text{ when }z=e_{j},\text{ } j\in\{\alpha+1,...,T\} . \end{cases}$$
It follows from the construction that $h_{3}$\ has the properties:
\vskip 1 mm
\begin{tabular}{l}
$i)\quad h_{3}|_{\triangle}\text{  is a definable } C^{q} \text{  embedding for any } \triangle\in K_{3}$\ ; \\
$ii)\quad h_{3}\left(\left(e_{j},e_{\beta_{1}},...,e_{\beta_{s}}\right)\right)=
0_{\triangle_{j}}\ast h_{2}\circ f\left(\left(e_{\beta_{1}},...,e_{\beta_{s}}\right)\right),\quad
\text{for any }\triangle=[e_{j},e_{\beta_{1}},...,e_{\beta_{s}}]\in K_{3}\setminus L,$ \\
$iii)\quad \{h_{3}(\triangle):\text{  }\triangle\in L\}\text{ is a }C^{q} \text { stratification with the condition }\Q
 \text{ of
} \left|K^{(d-1)}_{1}\right|,$\ compatible with\\
\hskip 11,4 cm $ \text{ the stratification }\sX'_{\left|K^{(d-1)}_{1}\right|},$ \\
$iv)\quad h_{3}\ \text{ is a Lipschitz mapping}\footnotemark.$\\
\end{tabular}\\
\footnotetext{It suffices to observe that for any
 $\triangle\in K_{3}$\ the first order derivatives of $h_{3}|_{\triangle}$\ are bounded.}
By the conical property and Remark \ref{rem: generic for 0 dimensional} the family
$\{h_{3}(\triangle):\text{   }\triangle\in K_{3}\}$\ is a definable $C^{q}$\ stratification
with the\ condition $\Q$\ of the set $\left|K_{1}\right|$\ compatible with the stratification $\sX'_{|K_{1}|}$.
Now it is clear that
$$\left\{h_{1}\circ h_{3}(\triangle): \text{  }\triangle \in K_{3}\right\}$$
is a definable $C^{q}$\ stratification with the\ condition $\Q$\ of the set $A$, compatible with
$\left\{h_{1}(\triangle): \text{   } \triangle\in K_{1}\right\}$. It is also compatible with $A_{1},..., A_{r}$, because
the family $\left\{h_{1}(\triangle): \text{   } \triangle\in K_{1}\right\}$\ is compatible with $A_{1}$,...,$A_{r}$\ (see Step 1).
Hence, $\left(K_{3}, h_{1}\circ h_{3}\right)$\ is the desired definable $C^{q}$\ triangulation.

\end{proof}

\section{The Whitney (B) and the Verdier conditions as $\T$\ conditions}
\label{sec: Whitney Verdier are T}
In this section, using the results from \cite{Cz}, we will show that the Whitney (B) and the Verdier condition belong to the class of $\T$\ conditions.
First remind the basic definitions.

\begin{definition}\label{def: Whitney (B)} Let $N$, $M$\ be $C^{q}$\ submanifolds of $\R^{n}$\ ($q\geq 1$) such that
$N\subset \overline{M}\setminus M$\ and let $a\in N$.
We say that the pair ($M$, $N$)\ satisfies \textit{the Whitney (B)}\ \textit{condition at the point} $a$\ (notation: $\W^{B}(M,N,a)$)\ if
for any sequences $\{a_{\nu}\}_{\nu\in \N}\subset N$, $\{b_{\nu}\}_{\nu\in\N}\subset M$\
both converging to $a$\ and such that the sequence of the secant lines $\{\R(a_{\nu}-b_{\nu})\}_{\nu\in\N}$
\ converges to a line $L\subset \R^{n}$\ in $\mathbb{P}_{n-1}$\ and the sequence of the tangent spaces $\{T_{b_{\nu}}M\}_{\nu\in\N}$\
converges to a subspace $T\subset \R^{n}$\ in $\mathbb{G}_{\dim M,n}$, we have always $L\subset T$.
\vskip 1 mm
When a pair of $C^{q}$\ submanifolds $(M,N)$\ satisfies (respectively, does not satisfy) the Whitney (B) condition\ at a point $a\in N$, we write $\W^{B}(M,N,a)$\
(respectively $\sim\W^{B}(M,N,a)$). If for any point $a\in N$\ we have $\W^{B}(M,N,a)$, we write $\W^{B}(M,N)$.
\end{definition}

\begin{definition}
\label{def: d(v,W)}
Let $v\in S^{n-1}$\ and let $W$\ be a nonzero linear subspace of $\R^{n}$. We put
$$d(v, W)=\inf\{sin(v,w): w\in W\cap S^{n-1}\},$$
where $sin(v,w)$\ denotes the sine of the angle between the vectors $v$\ and $w$\ and $S^{n-1}=\{v\in\R^{n}: |v|=1\}$.
We also put $d(u,W)=1$, if $W=\{0\}$.
\end{definition}

\begin{definition}
\label{Funkcja d}
For any $P\in\mathbb{G}_{k,n}$\ and $Q\in\mathbb{G}_{l,n}$, we put
$$d(P,Q)=\sup\{d(\lambda; Q): \lambda \in P\cap S^{n-1}\}, $$
when $k>0$, and $d(P,Q)=0$, when $k=0$.
\end{definition}

Now we list some elementary properties of the function $d$, leaving the proof to the reader.

\begin{proposition}\label{prop: wlasnosci d}\ \\
a) Consider the following metric on $\mathbb{P}_{n-1}$:
\begin{center}
 $\widetilde{d}(\R v, \R w)=\min \{|u-w|, |u+w|\}$\hskip 3 mm for \hskip 2 mm $u,w\in S^{n-1}$.\\
 \end{center}
Then we have
$$\frac{1}{\sqrt{2}}\text{ }\widetilde{d}(\R v, \R w)\leq d(\R v, \R w)\leq \widetilde{d}(\R v, \R w).$$
b) If $V$, $W$\ are linear subspaces of $\R^{n}$, then $d(V\times\R, W\times\R)=d(V,W)$. \\
c) If $Q'\subset Q$, then $d(P,Q)\leq d(P,Q').$\\
d) For any $k\in\N$, $k\leq n$\ the function $d$\ is a metric on $\mathbb{G}_{k,n}$.
\end{proposition}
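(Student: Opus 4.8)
The plan is to base everything on one elementary identity: for a unit vector $\lambda$ and a nonzero linear subspace $W\subset\R^{n}$ we have $d(\lambda,W)=\operatorname{dist}(\lambda,W)=|\lambda-\pi_{W}\lambda|$, where $\pi_{W}$ denotes the orthogonal projection onto $W$. Indeed $\sin(\lambda,w)=\sqrt{1-\langle\lambda,w\rangle^{2}}$ for $w\in W\cap S^{n-1}$ and $\sup_{w\in W\cap S^{n-1}}\langle\lambda,w\rangle^{2}=|\pi_{W}\lambda|^{2}$, and this is consistent with the convention $d(\lambda,\{0\})=1=\operatorname{dist}(\lambda,\{0\})$. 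Consequently, when $\dim P>0$,
$$d(P,Q)=\sup\{\operatorname{dist}(\lambda,Q):\lambda\in P\cap S^{n-1}\}=\|(\mathrm{Id}-\pi_{Q})\pi_{P}\|,$$
the operator norm; i.e.\ $d$ is the classical directed gap between subspaces, and (a)--(d) reduce to standard facts about it.

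For (a): since $\R v\cap S^{n-1}=\{\pm v\}$ and the sine of the angle is unchanged under $v\mapsto-v$, one gets $d(\R v,\R w)=\sin\theta$, where $\theta\in[0,\pi/2]$ is the acute angle between the two lines, whereas $\widetilde d(\R v,\R w)=\min\{|v-w|,|v+w|\}=\sqrt{2-2|\langle v,w\rangle|}=2\sin(\theta/2)$. The two inequalities then follow at once from $\sin\theta=2\sin(\theta/2)\cos(\theta/2)$ together with $\cos(\theta/2)\in[1/\sqrt2,1]$ for $\theta\in[0,\pi/2]$. (That $\widetilde d$ is a metric on $\mathbb{P}_{n-1}$ is standard, $\widetilde d$ being the quotient of the chordal metric on $S^{n-1}$ under $v\mapsto-v$.)

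For (b): identifying $\R^{n}\times\R$ with $\R^{n+1}$ we have $W\times\R=W\oplus\R e_{n+1}$, hence $\pi_{W\times\R}(x,t)=(\pi_{W}x,t)$ and $\operatorname{dist}\bigl((x,t),W\times\R\bigr)=|x-\pi_{W}x|$ for $(x,t)\in V\times\R$. Taking the supremum over $(x,t)\in(V\times\R)\cap S^{n}$, equivalently over $x\in V$ with $|x|\le1$, and using that $x\mapsto|x-\pi_{W}x|$ is positively homogeneous, the supremum is attained with $|x|=1$ (and $t=0$) and equals $d(V,W)$; the case $V=\{0\}$ (both sides $0$) is immediate. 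Part (c) is equally immediate: if $Q'\subset Q$ then $\operatorname{dist}(\lambda,Q)\le\operatorname{dist}(\lambda,Q')$ for every $\lambda$ (and $\operatorname{dist}(\lambda,Q)\le1=d(\lambda,\{0\})$ when $Q'=\{0\}$), so taking the supremum over $\lambda\in P\cap S^{n-1}$ gives $d(P,Q)\le d(P,Q')$.

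For (d): the case $k=0$ is trivial, so let $k\ge1$. Nonnegativity is clear, and $d(P,Q)=0$ forces $\operatorname{dist}(\lambda,Q)=0$ for all $\lambda\in P\cap S^{n-1}$, i.e.\ $P\subset Q$, hence $P=Q$ since $\dim P=\dim Q$; the converse is obvious. For the triangle inequality I would split
$$(\mathrm{Id}-\pi_{R})\pi_{P}=(\mathrm{Id}-\pi_{R})\pi_{Q}\pi_{P}+(\mathrm{Id}-\pi_{R})(\mathrm{Id}-\pi_{Q})\pi_{P}$$
and use submultiplicativity of the operator norm with $\|\pi_{P}\|=\|\mathrm{Id}-\pi_{R}\|=1$, which yields $d(P,R)\le d(Q,R)+d(P,Q)$. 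The one point requiring care is symmetry: from $\operatorname{dist}(\lambda,Q)^{2}=1-|\pi_{Q}\lambda|^{2}$ we get $d(P,Q)^{2}=1-\sigma_{\min}(\pi_{Q}|_{P})^{2}$, and since $\langle\pi_{Q}x,y\rangle=\langle x,y\rangle=\langle x,\pi_{P}y\rangle$ for $x\in P$, $y\in Q$, the map $\pi_{P}|_{Q}\colon Q\to P$ is the adjoint of $\pi_{Q}|_{P}\colon P\to Q$; as $\dim P=\dim Q$, an operator and its adjoint between these spaces have the same singular values, so $\sigma_{\min}(\pi_{Q}|_{P})=\sigma_{\min}(\pi_{P}|_{Q})$ and $d(P,Q)=d(Q,P)$. (Alternatively, via the principal angles $0\le\theta_{1}\le\dots\le\theta_{k}\le\pi/2$ of $P,Q$ one has $d(P,Q)=\sin\theta_{k}=d(Q,P)$.) The main obstacle is precisely this symmetry — the classical fact that the directed gap between equidimensional subspaces is symmetric — everything else being a routine computation.
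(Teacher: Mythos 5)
Your proof is correct. Note that the paper itself gives no argument for this proposition (it is explicitly ``left to the reader''), so there is nothing to compare against; your identification $d(\lambda,W)=|\lambda-\pi_{W}\lambda|$ and hence $d(P,Q)=\|(\mathrm{Id}-\pi_{Q})\pi_{P}\|$ reduces a)--c) to routine computations, and the only delicate point, the symmetry of $d$ on $\mathbb{G}_{k,n}$ in d), is handled correctly via the adjoint/equal-singular-values (equivalently, largest principal angle) argument, which is exactly where the hypothesis $\dim P=\dim Q=k$ is needed.
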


\begin{definition}\label{def: Verdier condition}
Let $\Lambda$, $\Gamma$\ be definable $C^{2}$\ submanifolds of $\R^{n}$, $\Gamma\subset\overline{\Lambda}\setminus\Lambda$. We say that
the pair $(\Lambda,\Gamma)$\ satisfies \textit{the Verdier condition at a point }$x_{0}\in\Gamma$\ (notation: $\W^{V}(\Lambda, \Gamma,x_{0})$), if
there exists an open  neighbourhood $U_{x_{0}}$\ of $x_{0}$\ in $\R^{n}$\ and $C_{x_{0}}>0$\ such that
 $$\forall x\in\Gamma\cap U_{x_{0}}\text{   }\forall y\in \Lambda\cap U_{x_{0}}\quad d\left(T_{x}\Gamma, T_{y}\Lambda\right)\leq C_{x_{0}}|x-y|.$$

In case the pair of submanifolds $(\Lambda,\Gamma)$\ satisfies the Verdier condition\
at each point $x_{0}\in \Gamma$, we write $\W^{V}(\Lambda,\Gamma)$.
\end{definition}

In the paper \cite{Cz}\ it was proved that for any o-minimal structure on real ordered field $\R$, admitting definable $C^{q}$\ Cell Decomposition\ with
$q\in\N\cup\{\infty,\omega\}$, $q\geq 1$, we have the following theorems.

\begin{theorem} The Whitney (B) condition is a $\WL$\ condition of class $C^{q}$, $q\geq 1$.
Also the Verdier condition is a $\WL$\ condition of class $C^{q}$, $q\geq 2$.
\end{theorem}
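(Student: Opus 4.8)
The plan is to verify, one by one, the five defining properties of a $\WL$ condition listed in Definition \ref{def: WL condition}, for $\Q=\W^{B}$ with $q\geq 1$ and for $\Q=\W^{V}$ with $q\geq 2$; the restriction $q\geq 2$ in the Verdier case is forced both because that condition is only defined for $C^{2}$ submanifolds and because its invariance under diffeomorphisms needs a Lipschitz differential. Three of the five properties are essentially routine. \emph{Definability}: for $\W^{V}$ the set $\{x_{0}\in\Gamma:\W^{V}(\Lambda,\Gamma,x_{0})\}$ is cut out by a first-order formula, since the Gauss maps $x\mapsto T_{x}\Gamma$, $y\mapsto T_{y}\Lambda$ and the function $d$ on Grassmannians are definable and $\W^{V}$ is a bounded-difference-quotient statement; for $\W^{B}$ one uses the o-minimal Curve Selection Lemma to replace a pair of sequences witnessing $\sim\W^{B}(\Lambda,\Gamma,x)$ by a single definable curve, so that the bad set is the frontier of a definable set. \emph{Genericity}: that the bad set is nowhere dense in $\Gamma$ is the local form of the o-minimal Whitney (resp. Verdier) stratification theorem (\cite{TL1}, \cite{TL2} and its Verdier analogue); I would deduce it by stratifying $\Lambda\cup\Gamma$ into a $\Q$-stratification compatible with $\Lambda$, $\Gamma$ and with a hypothetical open piece $\Gamma_{0}$ of the bad set, and comparing, on a top-dimensional stratum of $\Lambda$ whose closure meets $\Gamma_{0}$, the regularity of that stratum with that of $\Lambda$. \emph{Invariance under definable $C^{q}$ diffeomorphisms}: for $\W^{B}$ this is the classical fact that a $C^{1}$ diffeomorphism $\phi$ carries tangent spaces to tangent spaces by $D\phi$ and, via $\phi(a_{\nu})-\phi(b_{\nu})=D\phi(b_{\nu})(a_{\nu}-b_{\nu})+o(|a_{\nu}-b_{\nu}|)$ with $D\phi(b_{\nu})\to D\phi(x)$, carries the limit secant line to the limit secant line; for $\W^{V}$ one adds the estimate $d\big(D\phi(x)T_{x}\Gamma,D\phi(y)T_{y}\Lambda\big)\lesssim d(T_{x}\Gamma,T_{y}\Lambda)+\|D\phi(x)-D\phi(y)\|$ and uses that $D\phi$ is Lipschitz (so $\phi\in C^{2}$) and that $|\phi(x)-\phi(y)|\gtrsim|x-y|$ locally.

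For the \emph{projection property}, let $f$ be weakly Lipschitz of class $C^{q}$ on $\sX_{A}$, let $\Gamma\subset\overline{\Lambda}\setminus\Lambda$ be strata of $\sX_{A}$ with $\W^{\Q}(\operatorname{graph}f|_{\Lambda},\operatorname{graph}f|_{\Gamma})$, and let $\pi\colon\R^{n+m}\to\R^{n}$ be the coordinate projection; the idea is to push all the data of the Whitney/Verdier condition for $(\Lambda,\Gamma)$ up to the graph and apply the hypothesis there. For $\W^{B}$: given $a_{\nu}\in\Gamma$, $b_{\nu}\in\Lambda$ tending to $x$, lift to $\tilde a_{\nu},\tilde b_{\nu}$ on the graph, which tend to $(x,f(x))$ since $f$ is continuous; the weak Lipschitz inequality near $x$ bounds $|f(a_{\nu})-f(b_{\nu})|/|a_{\nu}-b_{\nu}|$, so $\pi$ maps the limit secant line $L'$ of the graph isomorphically onto the limit secant line $L$ downstairs, while $\pi\big(T_{\tilde b_{\nu}}\operatorname{graph}f|_{\Lambda}\big)=T_{b_{\nu}}\Lambda$ gives $\pi(T')\subseteq T$ for the limiting tangent spaces; hence $L=\pi(L')\subseteq\pi(T')\subseteq T$. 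For $\W^{V}$ one uses the same projection together with $|(x,f(x))-(y,f(y))|\leq\sqrt{1+C^{2}}\,|x-y|$ (weak lipschitzness) and the fact that $\pi$ is Lipschitz on the subset of the Grassmannians consisting of bounded-slope graphs, the tangent planes of $\operatorname{graph}f|_{\Gamma}$ being of this type because $f|_{\Gamma}$ is $C^{q}$, hence has locally bounded differential on $\Gamma$.

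Finally the \emph{lifting property}: $f$ is now locally Lipschitz on $\Lambda\cup\Gamma$ and $C^{q}$ on each of $\Lambda,\Gamma$, and $M,N\subset\Lambda\cup\Gamma$ are $C^{q}$ submanifolds, $N\subset\overline{M}\setminus M$, compatible with $\{\Lambda,\Gamma\}$. The compatibility cases with $M$ and $N$ both in $\Gamma$ (or both in $\Lambda$) reduce at once to the already-established invariance of $\Q$ under the $C^{q}$ graph map on a single stratum; the substantial case is $M\subset\Lambda$, $N\subset\Gamma$, where one must deduce $\W^{\Q}(\operatorname{graph}f|_{M},\operatorname{graph}f|_{N})$ from $\W^{\Q}(M,N)$ and $\W^{\Q}(\operatorname{graph}f|_{\Lambda},\operatorname{graph}f|_{\Gamma})$. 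Here I would work with the sequential formulation and combine the two hypotheses: $\W^{\Q}(M,N)$ forces the limit secant direction downstairs into the limiting tangent space of $M$ (and, for Verdier, controls the angle between $T_{x}N$ and $T_{y}M$), while $\W^{\Q}(\operatorname{graph}f|_{\Lambda},\operatorname{graph}f|_{\Gamma})$ pins $D(f|_{\Lambda})(y)$ against $D(f|_{\Gamma})(x)$ for $x\in N$, $y\in M$ close together; both differentials are bounded because $f$ is locally Lipschitz, and reassembling the graph tangent planes over $T_{x}N$ and $T_{y}M$ then yields the estimate. I expect this lifting property, together with the genericity property — which repackages the full strength of the o-minimal Whitney/Verdier stratification theorem as a purely local statement — to be the two genuine obstacles; the remaining three properties become computational once the reductions above are in place.
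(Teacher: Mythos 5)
There is nothing in this paper to compare your argument against: the theorem is not proved here at all, but imported wholesale from the companion paper \cite{Cz} (``In the paper [Cz] it was proved that \dots''), where verifying the five properties of Definition \ref{def: WL condition} for the Whitney (B) and Verdier conditions is the main content. Your proposal reconstructs exactly that program, and the reductions you set up are the right ones: definability via definable Gauss maps (and, for (B), definability of limit sets/curve selection), invariance by the classical $C^{1}$ (resp. $C^{2}$) computation, and the projection property by lifting sequences to the graph and using the weak Lipschitz bound across the pair $(\Lambda,\Gamma)$ to keep secants and tangent planes of $\operatorname{graph} f$ non-vertical, so that $\pi(L')=L$ and $\pi(T')\subset T$; the Verdier version of this with the bounded-slope estimate on $T_{x}\operatorname{graph} f|_{\Gamma}$ is also correct.

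The weak points are the two you yourself flag, and they are precisely where the substance of \cite{Cz} lies, so as it stands the proposal is a plan rather than a proof. For genericity, deducing nowhere-density of the bad set of a fixed pair $(\Lambda,\Gamma)$ from the existence of $\Q$-stratifications is not immediate: after refining, points $b_{\nu}\in\Lambda$ may fall into strata of dimension $<\dim\Lambda$, whose stratum tangent spaces are not the $T_{b_{\nu}}\Lambda$ occurring in the condition; one needs either the monotonicity $d(P,Q)\leq d(P,Q')$ for $Q'\subset Q$ (Verdier) or a density/limit argument replacing $b_{\nu}$ by nearby top-dimensional points (Whitney (B)) — alternatively one can quote the local form of the o-minimal (B)/Verdier genericity lemma directly from \cite{TL1}, \cite{TL2}, \cite{LSW}. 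For the lifting property in the essential case $M\subset\Lambda$, $N\subset\Gamma$, the argument you gesture at does work, but the step that must be made explicit is that local Lipschitzness of $f$ on $\Lambda\cup\Gamma$ bounds both the cross-strata difference quotients and $\|D(f|_{\Lambda})\|$, so that limits of $T_{\tilde y_{\nu}}\operatorname{graph} f|_{M}$ are graphs of a limiting map $\delta_{\Lambda}|_{T_{M}}$ over $\lim T_{y_{\nu}}M$, while $\W^{\Q}(\operatorname{graph} f|_{\Lambda},\operatorname{graph} f|_{\Gamma})$ forces the limiting secant (resp. the vector $D(f|_{\Gamma})(x)u$) to agree with $\delta_{\Lambda}$ applied to the limiting secant direction (resp. to a nearby vector of $T_{y}M$ supplied by $\W^{\Q}(M,N)$); only after this reassembly do the (B) and Verdier conclusions for $(\operatorname{graph} f|_{M},\operatorname{graph} f|_{N})$ follow. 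Since these quantitative details are exactly what the cited reference supplies, your outline is acceptable as a roadmap but cannot replace the reference to \cite{Cz} without them.
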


\begin{corollary}\label{col: Whitney Verdier are weakly lipschitz invariant}
Theorem \ref{trm: Q invariance} holds true for the Whitney (B) condition with $q\in\N\cup\{\infty,\omega\}$, $q\geq 1$\ and for the Verdier condition with $q\geq 2$.
\end{corollary}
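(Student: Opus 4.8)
The plan is to obtain this as an immediate specialization of Theorem \ref{trm: Q invariance}. That theorem is stated for an arbitrary regularity $\WL$ condition $\Q$ of class $C^{q}$, with no further restriction on $\Q$, so everything that is needed is to certify that each of the two concrete conditions under consideration is indeed a $\WL$ condition of class $C^{q}$ in the indicated range of the smoothness exponent.

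First I would invoke the theorem stated immediately before the corollary: it asserts precisely that the Whitney (B) condition is a $\WL$ condition of class $C^{q}$ for every $q\in\N\cup\{\infty,\omega\}$ with $q\geq 1$, and that the Verdier condition is a $\WL$ condition of class $C^{q}$ for every such $q$ with $q\geq 2$ (the lower bound $q\geq 2$ in the Verdier case being forced by the fact that, per Definition \ref{def: Verdier condition}, the Verdier condition is only defined for $C^{2}$ submanifolds). Taking $\Q$ to be the Whitney (B) condition, respectively the Verdier condition, one checks that all hypotheses appearing in Theorem \ref{trm: Q invariance} — definability of the set $B$, the weak bi-Lipschitzness of class $C^{q}$ of $f$ on the definable $C^{q}$ stratification $\sX_{B}$, and the local Lipschitzness of $f$ on each union of two adjacent strata — are conditions on $f$ and $\sX_{B}$ alone and therefore transfer unchanged. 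Substituting into the conclusion then yields verbatim the existence of the substratification $\sX_{B}'$ with the same top-dimensional strata and the implication $\W^{B}(M,N)\Rightarrow\W^{B}(f(M),f(N))$, respectively $\W^{V}(M,N)\Rightarrow\W^{V}(f(M),f(N))$, for all definable $C^{q}$ submanifolds $M,N\subset B$ with $N\subset\overline{M}\setminus M$ compatible with $\sX_{B}'$ — which is exactly the assertion of the corollary.

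Since the entire content of the statement is carried by the two results being combined, I do not expect any genuine obstacle; the only point requiring attention is the bookkeeping of admissible exponents, in particular keeping track of the restriction $q\geq 2$ in the Verdier case and not claiming the range $q\geq 1$ there. No further argument is needed.
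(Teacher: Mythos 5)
Your proposal is correct and matches the paper's (implicit) argument exactly: the corollary is an immediate specialization of Theorem \ref{trm: Q invariance}, justified by the preceding theorem certifying that the Whitney (B) condition is a $\WL$ condition of class $C^{q}$ for $q\geq 1$ and the Verdier condition for $q\geq 2$. Your bookkeeping of the admissible exponents is also the only point the paper's statement tracks, so nothing is missing.
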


\begin{theorem} The Whitney (B) condition has the conical property of class $C^{q}$, $q\geq 1$.
\end{theorem}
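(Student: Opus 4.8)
The plan is to reduce the statement, via Remark~\ref{rem: conical property 1} (which applies since the Whitney~(B) condition is a $\WL$ condition of class $C^{q}$, hence invariant under definable $C^{q}$ diffeomorphisms), to proving, for arbitrary $C^{q}$ submanifolds $M,N\subset\R^{n}$ with $N\subset\overline{M}\setminus M$ and all products viewed inside $\R^{n+1}=\R^{n}\times\R$, the implications
$$\W^{B}(M,N)\ \Longrightarrow\ \text{a)}\ \W^{B}\bigl(M\times(0,1),M\times\{1\}\bigr)\ \text{and}\ \W^{B}\bigl(N\times(0,1),N\times\{1\}\bigr);$$
$$\text{b)}\ \W^{B}\bigl(M\times(0,1),N\times(0,1)\bigr);\qquad\text{c)}\ \W^{B}\bigl(M\times(0,1),N\times\{1\}\bigr).$$
The one elementary linear-algebra fact I would use repeatedly is that the tangent space to $M\times(0,1)$ at $(x,t)$ is $T_{x}M\times\R$, that a sequence $T_{x_{\nu}}M\times\R$ converges in the Grassmannian precisely when $T_{x_{\nu}}M$ does, with limit $(\lim T_{x_{\nu}}M)\times\R$, and hence that every such limit contains $\{0\}\times\R$; this follows from Proposition~\ref{prop: wlasnosci d}\,b) together with the observation that a $(\dim M+1)$-plane containing $\{0\}\times\R$ is necessarily of the form $T\times\R$.

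For a) (which does not use the hypothesis) I would argue directly: near a point $(a_{0},1)$ write $M$ as a $C^{1}$ graph of $g$ over $T_{a_{0}}M$ with the differential of $g$ vanishing at the base point; the mean value inequality then forces every limit of secant directions between points of $M\times\{1\}$ and points of $M\times(0,1)$ near $(a_{0},1)$ to lie in $T_{a_{0}}M\times\R$, while continuity of the Gauss map identifies $T_{a_{0}}M\times\R$ with the limit of the tangent spaces $T_{b_{\nu}}(M\times(0,1))$. The two sub-claims of a) have the same form.

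For b) and c) I would run a single argument. Fix $a=(a_{0},s)$ with $s\in(0,1)$ in case b) and $s=1$ in case c), and take sequences $a_{\nu}=(a_{\nu}^{0},s_{\nu})$ in $N\times(0,1)$ (resp.\ in $N\times\{1\}$) and $b_{\nu}=(b_{\nu}^{0},u_{\nu})$ in $M\times(0,1)$, both converging to $a$; note $a_{\nu}^{0}\ne b_{\nu}^{0}$ since $N\cap M=\emptyset$. Passing to a subsequence, I may assume $\R(a_{\nu}^{0}-b_{\nu}^{0})\to L_{0}$ in $\mathbb{P}_{n-1}$, $T_{b_{\nu}^{0}}M\to T$ in $\mathbb{G}_{\dim M,n}$, and $w_{\nu}/r_{\nu}\to c\in[-\infty,+\infty]$, where $r_{\nu}=|a_{\nu}^{0}-b_{\nu}^{0}|>0$ and $w_{\nu}\to 0$ equals $s_{\nu}-u_{\nu}$ (resp.\ $1-u_{\nu}$). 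The limit of the secant lines $\R(a_{\nu}^{0}-b_{\nu}^{0},w_{\nu})$ is then $\{0\}\times\R$ if $c=\pm\infty$, and $\R(v_{0},c)$ with $v_{0}$ a unit vector spanning $L_{0}$ if $c$ is finite, while the limit of the tangent spaces $T_{b_{\nu}}(M\times(0,1))$ is $T\times\R$. Since $\W^{B}(M,N)$ holds, Definition~\ref{def: Whitney (B)} applied at $a_{0}\in N$ yields $L_{0}\subset T$, hence $v_{0}\in T$; in every case the limit secant line lies in $T\times\R$, which is exactly what is required.

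I do not anticipate a genuine obstacle: the whole argument is a case analysis on the ratio $w_{\nu}/r_{\nu}$ of the ``horizontal'' to the ``vertical'' scale of the secants, together with the remark about subspaces of the form $T\times\R$. The only points that call for a little care are checking that the reduction through Remark~\ref{rem: conical property 1} is legitimate (invoking the $C^{q}$-invariance already contained in the $\WL$ property) and, in part a), the graph-plus-mean-value estimate for secants internal to a single submanifold.
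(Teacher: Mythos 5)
Your proposal is correct and follows essentially the same route as the paper: reduce via Remark \ref{rem: conical property 1}, use $T_{(x,t)}(M\times(0,1))=T_{x}M\times\R$, and for b), c) pass to subsequences, project the secants and tangent spaces to $\R^{n}$, apply $\W^{B}(M,N)$ there and lift through the product structure (your case analysis on the ratio $w_{\nu}/r_{\nu}$ is just an explicit version of the paper's observation that the limit secant line is contained in $L'\times\R$). The only cosmetic difference is in part a), where the paper simply notes that $M\times(0,1)\cup M\times\{1\}$ (and likewise for $N$) is a $C^{q}$ manifold with boundary, hence trivially Whitney (B) regular, while you spell out the graph-plus-mean-value argument behind that standard fact.
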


\begin{proof} As the Whitney (B) condition is $C^{q}$\ invariant\ with $q\geq 1$, we may use
 the equivalent definition from Remark \ref{rem: conical property 1}. Let $M$, $N$\ be definable $C^{q}$\ submanifolds of $\R^{n}$, $N\subset\overline{M}\setminus M$. Assume that $\W^{B}(M, N)$.
\vskip 1 mm
a) The conditions $\W^{B}\left(M\times (0;1),M\times\{1\}\right)$\ and $\W^{B}(N\times(0;1),N\times\{1\})$\ hold true, because
$M\times(0,1)\cup M\times\{1\}$\ and $N\times(0,1)\cup N\times\{1\}$\
are definable submanifolds with boundaries of class $C^{q}$, so they trivially satisfy the Whitney (B) condition.
\vskip 1 mm
b) Now we are to prove that $\W^{B}( M\times(0;1), N\times(0;1))$. Let $x\in N\times(0,1)$\ and consider two
 sequences $\{x_{\nu}\}_{\nu\in\N}\in N\times (0;1)$, $\{y_{\nu}\}_{\nu\in\N}\in M\times(0;1)$\ such that
$$x_{\nu}, y_{\nu}\longrightarrow x, \qquad \R(x_{\nu}-y_{\nu})\longrightarrow L,\qquad T_{y_{\nu}}(M\times (0;1))\longrightarrow T$$
for $\nu\longrightarrow +\infty$, with some $L\in\mathbb{P}_{n}$, $T\in\mathbb{G}_{\dim M+1,n+1}$. We have to show that $L\subset T$.
Denote the coordinates of the points $x$\ and $x_{\nu}, y_{\nu}$\ for $\nu\in\N$ :
$$x=(x',x_{n+1}), \qquad x_{\nu}=(x'_{\nu},x_{\nu n+1}),\qquad  y_{\nu}=(y'_{\nu},y_{\nu n+1})$$
with $x'$, $x'_{\nu}\in N$\ and $y'_{\nu}\in M$, $x_{n+1}$, $x_{\nu n+1}$, $y_{\nu n+1}\in (0,1)$. Then for any $\nu\in\N$, we
have the following inclusions
$$(\mathcal{S}1)\qquad \R (x_{\nu}-y_{\nu})=\R (x'_{\nu}-y'_{\nu}, x_{\nu n+1}-y_{\nu n+1})\text{\ \ }\subset \text{\ \ } \R (x'_{\nu}-y'_{\nu})\times
 \R(x_{\nu n+1}-y_{\nu n+1})\subset \R (x'_{\nu}-y'_{\nu})\times \R.$$
Moreover,
$$(\mathcal{S}2)\qquad \qquad \qquad \qquad \qquad \qquad T_{y_{\nu}}(M\times (0;1))=T_{y'_{\nu}}M\times \R. \qquad \qquad \qquad\qquad \qquad \qquad\qquad \qquad \qquad\qquad$$
As the sequences $\{x'_{\nu}\}_{\nu\in\N}$, $\{y'_{\nu}\}_{\nu\in\N}$\ tend to $x'$, so without loss of generality we may assume that
$$\R (x'_{\nu}-y'_{\nu})\longrightarrow L',\qquad T_{y'_{\nu}}M\longrightarrow T_{M}\qquad \text{for}\quad\nu\longrightarrow +\infty,$$
where $L'\in\mathbb{P}_{n-1}$, $T_{M}\in \mathbb{G}_{\dim M,n}$\ are some subspaces. From $(\mathcal{S}1)$\ and $(\mathcal{S}2)$\ we have
$$L\subset L'\times \R,\qquad T=T_{M}\times\R. $$
As $\W^{B}(M,N)$,\ then $L'\subset T_{M}$.
Finally,
$$L \quad \subset\quad L'\times \R\quad \subset\quad T_{M}\times \R = T.$$

c) The proof of $W^{B}( M\times(0;1), N\times \{1\})$\ is similar to the proof of Case b).

\end{proof}

\begin{corollary}
The Whitney (B) condition belongs to the class $\T$\ with $q\geq 1$.
\end{corollary}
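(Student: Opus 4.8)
The statement to prove is the final Corollary: the Whitney (B) condition belongs to the class $\T$ (for $q \geq 1$). Since $\T$ is defined as the class of $\WL$ conditions that additionally enjoy the conical property, the plan is simply to assemble the two ingredients that have already been established in the excerpt. First, I would invoke the theorem stated just above (``The Whitney (B) condition is a $\WL$\ condition of class $C^{q}$, $q\geq 1$''), which gives the five defining properties of a $\WL$ condition: definability, genericity, invariance under definable $C^{q}$ diffeomorphisms, the projection property with respect to weakly Lipschitz mappings, and the lifting property with respect to locally Lipschitz mappings. Second, I would invoke the immediately preceding theorem (``The Whitney (B) condition has the conical property of class $C^{q}$, $q\geq 1$''), whose proof via Remark \ref{rem: conical property 1} has just been carried out.

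Combining these, the Whitney (B) condition satisfies all the requirements in the definition of a triangulable $C^{q}$ condition, so by definition it lies in $\T$ for every $q \geq 1$. That is the entire argument; no new work is needed.

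The only point worth making explicit — and the closest thing to an ``obstacle,'' though it is really just bookkeeping — is the compatibility of the smoothness index: the $\WL$ property for Whitney (B) is stated for $q \geq 1$, the conical property for Whitney (B) is proved for $q \geq 1$, and the notion of a triangulable condition is defined for a fixed $q$. Since both ingredients hold in the full range $q \geq 1$, the conclusion ``Whitney (B) $\in \T$ with $q \geq 1$'' follows without any restriction beyond $q \geq 1$. (By contrast, for the Verdier condition one would only get membership in $\T$ for $q \geq 2$, since its $\WL$ property requires $C^{2}$; but that is not asked here.) Thus the proof is a one-line synthesis, and I would write it as: by the two preceding theorems the Whitney (B) condition is a $\WL$ condition of class $C^{q}$ with the conical property of class $C^{q}$ for all $q \geq 1$, hence it is a triangulable condition, i.e.\ $\W^{B} \in \T$.
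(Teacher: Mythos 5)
Your proposal is correct and matches the paper exactly: the corollary is stated there without proof precisely because it is the immediate combination of the two preceding theorems (Whitney (B) is a $\WL$ condition of class $C^{q}$, $q\geq 1$, and has the conical property of class $C^{q}$, $q\geq 1$) with the definition of a triangulable condition. Your remark on the smoothness index $q$ is accurate bookkeeping and consistent with the paper's treatment of the Verdier case.
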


In order to prove the conical property for the Verdier condition, we need one more lemma.

\begin{lemma}\label{lem: varphi is d Lipschitz } Let $k,n \in\N$, $k\leq n$\ and consider a linear subspace $E\subset\R^{n}$, $\dim E =k$. Let
$\mathcal{L}(E,E^{\perp})$\ be a vector space of linear mappings with the norm $||l||=\sup\{|f(v)|:\text{  }v\in E, |v|=1\}$. Consider the grassmanian
$\mathbb{G}_{k,n}$\ with the metric $d$\ and the mapping
$$\varphi: \mathcal{L}(E,E^{\perp})\ni l\longmapsto \widehat{l}\in\mathbb{G}_{k,n},\qquad \text{where}\qquad \widehat{l}=\{v+l(v):\quad v\in E\}.$$
Then $\varphi$\ is Lipschitz and
$$\forall \text{ } f,g\in\mathcal{L}(E,E^{\perp})\quad d\left(\widehat{f},\widehat{g}\right)\leq 2\cdot ||f-g||.$$
\end{lemma}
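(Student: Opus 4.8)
The goal is to bound $d(\widehat f,\widehat g)$ linearly in $\|f-g\|$. The natural approach is to fix a unit vector $w=v+f(v)$ in $\widehat f$ with $v\in E$ (so $|v|\le 1$), produce a comparison vector in $\widehat g$, and estimate the sine of the angle between them. The obvious candidate is $w'=v+g(v)\in\widehat g$; then $w-w'=f(v)-g(v)\in E^{\perp}$, so $|w-w'|=|f(v)-g(v)|\le\|f-g\|\,|v|\le\|f-g\|$. Since the sine of the angle between $w$ and the line $\R w'$ is at most $|w-w'|/|w'|$ when we normalise, and also at most $|w-w'|/|w|=|w-w'|$ because $|w|=1$, I would like to conclude $d(\R w,\widehat g)\le\|f-g\|$ and hence, taking the supremum over all such $w$, that $d(\widehat f,\widehat g)\le\|f-g\|$, which is even better than the claimed bound. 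The factor $2$ in the statement suggests the author is being deliberately wasteful to avoid a delicate normalisation estimate, so I will aim for the clean argument but accept that a cruder bound suffices.

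First I would recall from Definition \ref{def: d(v,W)} and Definition \ref{Funkcja d} that $d(\widehat f,\widehat g)=\sup\{d(\lambda;\widehat g):\lambda\in\widehat f\cap S^{n-1}\}$ and $d(\lambda;\widehat g)=\inf\{\sin(\lambda,u):u\in\widehat g\cap S^{n-1}\}$. So it suffices to show: for every unit $\lambda\in\widehat f$ there is a unit $u\in\widehat g$ with $\sin(\lambda,u)\le 2\|f-g\|$. Write $\lambda=v+f(v)$ with $v\in E$; since $E\perp E^{\perp}$ we have $1=|\lambda|^2=|v|^2+|f(v)|^2$, so $|v|\le 1$. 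Set $\mu=v+g(v)\in\widehat g$ and $u=\mu/|\mu|$ (note $\mu\neq 0$ since its $E$-component is $v$; if $v=0$ then $\lambda=f(0)=0$, contradicting $|\lambda|=1$, so in fact $|v|>0$ and $|\mu|\ge|v|>0$). Then $\lambda-\mu=f(v)-g(v)\in E^{\perp}$, hence $|\lambda-\mu|=|(f-g)(v)|\le\|f-g\|\,|v|\le\|f-g\|$.

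Next I would bound the sine. Since $|\lambda|=1$, the distance from $\lambda$ to the line $\R\mu$ equals $|\lambda-(\langle\lambda,u\rangle)u|$, and the orthogonal projection is the nearest point on that line, so $\sin(\lambda,u)=\mathrm{dist}(\lambda,\R u)\le|\lambda-\mu|\le\|f-g\|$. (Here I use that $|\lambda|=1$, so the sine of the angle between $\lambda$ and $u$ is exactly the distance from the tip of $\lambda$ to the line spanned by $u$.) Taking the infimum over $\widehat g\cap S^{n-1}$ gives $d(\lambda;\widehat g)\le\|f-g\|$, and the supremum over $\lambda\in\widehat f\cap S^{n-1}$ gives $d(\widehat f,\widehat g)\le\|f-g\|\le 2\|f-g\|$. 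If one is uneasy about the "sine equals distance to the line" step, an alternative is the estimate $\sin(\lambda,\mu)\le 2|\lambda-\mu|/\max(|\lambda|,|\mu|)$ valid for any nonzero vectors, which for $|\lambda|=1$ gives exactly the factor $2$ in the statement; this is the likely source of the constant.

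\textbf{Main obstacle.} There is essentially no hard step here; the only point requiring a little care is the passage from the vector estimate $|\lambda-\mu|\le\|f-g\|$ to an estimate on the \emph{sine} of the angle, i.e. normalising $\mu$ without losing control — handled either by the exact identity $\sin(\lambda,u)=\mathrm{dist}(\lambda,\R u)$ when $|\lambda|=1$, or by the elementary inequality relating $\sin(\lambda,\mu)$ to $|\lambda-\mu|$ with a harmless factor $2$. One should also note the degenerate case $E=\{0\}$ (then $k=0$ and $d(\widehat f,\widehat g)=0$ by Definition \ref{Funkcja d}, so the inequality is trivial), and the case $k=n$ (then $E^{\perp}=\{0\}$, $\mathcal L(E,E^\perp)=\{0\}$, and both sides vanish). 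Lipschitzianity of $\varphi$ is then immediate from the displayed inequality.
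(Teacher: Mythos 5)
Your proof is correct, and it is in fact sharper than what the lemma asserts: you obtain $d(\widehat{f},\widehat{g})\leq ||f-g||$. The underlying strategy is the same as the paper's (compare the graph point $v+f(v)$ of $\widehat{f}$ with the graph point $v+g(v)$ of $\widehat{g}$ over the \emph{same} base point $v\in E$, and use $|f(v)-g(v)|\leq ||f-g||\,|v|$), but the execution of the key estimate differs. The paper parametrizes by unit $v\in E$, passes via Proposition \ref{prop: wlasnosci d} c) and a) to the difference of the two \emph{normalized} vectors $\frac{v+f(v)}{\sqrt{1+|f(v)|^{2}}}$ and $\frac{v+g(v)}{\sqrt{1+|g(v)|^{2}}}$, and then runs an explicit algebraic computation with the normalizing factors; that computation is exactly where the factor $2$ is lost. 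You instead parametrize by a unit vector $\lambda=v+f(v)\in\widehat{f}\cap S^{n-1}$ (so $|v|\leq 1$), keep the comparison vector $\mu=v+g(v)$ unnormalized, and use the exact identity $\sin(\lambda,u)=\operatorname{dist}(\lambda,\R\mu)\leq|\lambda-\mu|$ for $u=\mu/|\mu|$, which avoids the normalization estimate altogether and yields constant $1$; your checks that $\mu\neq 0$ and your treatment of the degenerate cases $k=0$, $k=n$ are fine, and the conclusion $d(\lambda;\widehat{g})\leq\sin(\lambda,u)$ is just the definition of $d$ as an infimum (equivalently Proposition \ref{prop: wlasnosci d} c)). So your route buys a cleaner argument and a better constant, while the paper's route is a self-contained computation relying only on its stated properties of $d$ and $\widetilde{d}$; either is acceptable, and the factor $2$ in the statement is simply not needed.
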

\begin{proof}
Let $v\in E$, $|v|=1$. Consider arbitrary functions $f, g\in \mathcal{L}(E,E^{\perp})$. Then
$$d\left(\R\frac{v+f(v)}{|v+f(v)|}, \widehat{g}\right)\overset{Prop. \ref{prop: wlasnosci d}c)}{\leq} d\left(\R \frac{v+f(v)}{|v+f(v)|}, \R \frac{v+g(v)}{|v+g(v)|}\right)
\overset{\text{Prop.}\ref{prop: wlasnosci d}a)}{\leq}\left|\frac{v+f(v)}{\sqrt{1+|f(v)|^{2}}}-\frac{v+g(v)}{\sqrt{1+|g(v)|^{2}}}\right|=$$
$$=\left|\frac{v+f(v)}{\sqrt{1+|f(v)|^{2}}}-\frac{v+g(v)}{\sqrt{1+|f(v)|^{2}}}+
[v+g(v)]\cdot \left[\frac{1}{\sqrt{1+|f(v)|^{2}}}-\frac{1}{\sqrt{1+|g(v)|^{2}}}\right]\right|=$$
$$= \left|\frac{f(v)-g(v)}{\sqrt{1+|f(v)|^{2}}}+[v+g(v)]\cdot \frac{\sqrt{1+|g(v)|^{2}}- \sqrt{1+|f(v)|^{2}}}
{\sqrt{1+|f(v)|^{2}}\cdot \sqrt{1+|g(v)|^{2}}}\cdot
\frac{\sqrt{1+|g(v)|^{2}}+ \sqrt{1+|f(v)|^{2}}}{\sqrt{1+|g(v)|^{2}}+ \sqrt{1+|f(v)|^{2}}} \right|\leq$$
$$\leq \left|\frac{f(v)-g(v)}{\sqrt{1+|f(v)|^{2}}}\right|+\sqrt{1+|g(v)|^{2}}\cdot \frac{||g(v)|^{2}- |f(v)|^{2}|}{\sqrt{1+|f(v)|^{2}}\cdot \sqrt{1+|g(v)|^{2}}}
\cdot \frac{1}{\sqrt{1+|f(v)|^{2}}+ \sqrt{1+|g(v)|^{2}}} \leq$$
$$\overset{\text{ triangle inequality}}{\leq} |f(v)-g(v)|\cdot\frac{1}{\sqrt{1+|f(v)|^{2}}}\cdot
\left(1+\frac{|f(v)|+|g(v)|}{\sqrt{1+|f(v)|^{2}}+ \sqrt{1+|g(v)|^{2}}}\right)\leq$$
$$\leq  2\cdot |f(v)-g(v)|\leq 2\cdot ||f-g||.$$
Therefore
$$d\left(\widehat{f},\widehat{g}\right)=\underset{v\in E\cap S^{n-1}}{\sup} d\left(\R\frac{v+f(v)}{|v+f(v)|}, \widehat{g}\right)\leq 2\cdot ||f-g||.$$
\end{proof}

\begin{corollary}\label{col: d(TxM, TyM)<C|x-y|} Let $\Lambda$\ be a definable $C^{q}$\ submanifold in $\R^{n}$, $q\in\N\cup\{\infty,\omega\}$, $q\geq 2$. Then for every point $x_{0}\in\Lambda$\ there exists
a constant $C_{x_{0}}>0$\ and an open neighbourhood $U_{x_{0}}$\ of the point $x_{0}$\ such that
$$\forall \text{  }x,y\in U_{x_{0}}\cap \Lambda\quad d\left(T_{x}\Lambda,T_{y}\Lambda\right)\leq C_{x_{0}}\cdot |x-y|.$$
\end{corollary}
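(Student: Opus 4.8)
The plan is to reduce the statement to a local coordinate computation and then invoke Lemma \ref{lem: varphi is d Lipschitz }. Fix $x_{0}\in\Lambda$ and let $k=\dim\Lambda$. Since $\Lambda$ is a $C^{q}$ submanifold with $q\geq 2$, there is an open neighbourhood $U_{x_{0}}$ of $x_{0}$ in $\R^{n}$ and, after an orthogonal change of coordinates, a splitting $\R^{n}=E\oplus E^{\perp}$ with $\dim E=k$ and a $C^{q}$ map $\gamma\colon V\longrightarrow E^{\perp}$ defined on an open subset $V\subset E$ such that $\Lambda\cap U_{x_{0}}$ is the graph of $\gamma$; shrinking $U_{x_{0}}$ we may assume $V$ is convex, relatively compact, and that the first derivative $D\gamma$ is bounded on $V$. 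For a point $x=(v,\gamma(v))\in\Lambda\cap U_{x_{0}}$ the tangent space is $T_{x}\Lambda=\{(u,D\gamma(v)u):u\in E\}$, which in the notation of Lemma \ref{lem: varphi is d Lipschitz } is exactly $\widehat{D\gamma(v)}$ with $D\gamma(v)\in\mathcal{L}(E,E^{\perp})$ (identifying the linear part of the differential with an element of $\mathcal{L}(E,E^{\perp})$).

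First I would apply Lemma \ref{lem: varphi is d Lipschitz } to the two linear maps $f=D\gamma(v)$ and $g=D\gamma(w)$ for points $x=(v,\gamma(v))$, $y=(w,\gamma(w))$ in $\Lambda\cap U_{x_{0}}$, obtaining
$$d\bigl(T_{x}\Lambda, T_{y}\Lambda\bigr)=d\bigl(\widehat{D\gamma(v)},\widehat{D\gamma(w)}\bigr)\leq 2\cdot\|D\gamma(v)-D\gamma(w)\|.$$
Next, since $q\geq 2$, the map $v\longmapsto D\gamma(v)$ is of class $C^{q-1}$, in particular $C^{1}$, on $V$; shrinking $U_{x_{0}}$ once more so that the second derivatives of $\gamma$ are bounded, say by $L$, on the convex set $V$, the Mean Value Theorem gives $\|D\gamma(v)-D\gamma(w)\|\leq L\cdot|v-w|$. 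Finally, $|v-w|\leq|x-y|$ because $v-w$ is the orthogonal projection of $x-y$ onto $E$, so combining the inequalities yields
$$d\bigl(T_{x}\Lambda, T_{y}\Lambda\bigr)\leq 2L\cdot|v-w|\leq 2L\cdot|x-y|,$$
and we may take $C_{x_{0}}=2L$.

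The only point requiring care — and the reason $q\geq 2$ is needed — is that the graphing map $\gamma$ must be $C^{2}$ so that $D\gamma$ is Lipschitz on a small convex neighbourhood; this is where the bound on the second derivatives, hence the constant $C_{x_{0}}$, comes from, and it is genuinely local, which is why the conclusion is stated with a neighbourhood $U_{x_{0}}$ depending on $x_{0}$. The case $k=0$ is trivial since then $d(T_{x}\Lambda,T_{y}\Lambda)=0$ by Definition \ref{Funkcja d}. Everything else is routine once the reduction to the graph chart is set up.
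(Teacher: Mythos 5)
Your proof is correct and follows exactly the route the paper intends: the corollary is stated without proof as an immediate consequence of Lemma \ref{lem: varphi is d Lipschitz }, and the expected argument is precisely your local graph chart $\gamma$ over $E=T_{x_{0}}\Lambda$, the identification $T_{x}\Lambda=\widehat{D\gamma(v)}$, the bound $d(\widehat{f},\widehat{g})\leq 2\|f-g\|$, and the Lipschitz estimate for $D\gamma$ from boundedness of the second derivatives (using $q\geq 2$) on a convex neighbourhood, together with $|v-w|\leq|x-y|$.
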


Now we can prove the conical property for the Verdier condition.

\begin{theorem}\label{trm: Verdier is conical} The Verdier condition has the conical property of class $C^{q}$, $q\geq 2$.
\end{theorem}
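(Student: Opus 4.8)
Since the Verdier condition is invariant under definable $C^{q}$ diffeomorphisms for $q\geq 2$, I would use the equivalent reformulation of the conical property given in Remark \ref{rem: conical property 1}: it suffices to show that if $M,N\subset\R^{n}$ are definable $C^{q}$ submanifolds with $N\subset\overline{M}\setminus M$ and $\W^{V}(M,N)$, then $\W^{V}(M\times(0;1),M\times\{1\})$, $\W^{V}(N\times(0;1),N\times\{1\})$, $\W^{V}(M\times(0;1),N\times(0;1))$ and $\W^{V}(M\times(0;1),N\times\{1\})$. The essential point is that the tangent space behaves multiplicatively: $T_{(y,t)}(M\times(0;1))=T_{y}M\times\R$ for $y\in M$, $t\in(0;1)$, and similarly $T_{(y,1)}(M\times\{1\})=T_{y}M\times\{0\}$; combined with Proposition \ref{prop: wlasnosci d}\,b), which gives $d(V\times\R,W\times\R)=d(V,W)$, and part c), which gives monotonicity of $d$ in its second argument, the distances between tangent spaces of the product manifolds reduce to distances between tangent spaces of $M$ and $N$.

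The two items of part a) are the easiest: $M\times(0;1)\cup M\times\{1\}$ and $N\times(0;1)\cup N\times\{1\}$ are definable $C^{q}$ submanifolds with boundary, so for $(y,1)$ in the boundary one takes a neighbourhood in which the manifold is a graph and applies Corollary \ref{col: d(TxM, TyM)<C|x-y|} directly — the Verdier inequality for a submanifold-with-boundary against its own boundary is local and follows from the Lipschitz estimate for the Gauss map. For part b), given $x=(x',t_{0})\in N\times(0;1)$, I pick the neighbourhood $U_{x'}$ and constant $C_{x'}$ from the Verdier condition $\W^{V}(M,N)$ at $x'$, and for $(x',s)\in N\times(0;1)$ and $(y',u)\in M\times(0;1)$ near $x$ estimate
$$d\bigl(T_{(x',s)}(N\times(0;1)),\,T_{(y',u)}(M\times(0;1))\bigr)=d\bigl(T_{x'}N\times\R,\,T_{y'}M\times\R\bigr)=d\bigl(T_{x'}N,T_{y'}M\bigr)\leq C_{x'}\,|x'-y'|\leq C_{x'}\,|(x',s)-(y',u)|,$$
using Proposition \ref{prop: wlasnosci d}\,b). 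For part c), the only difference is that the tangent space along $N\times\{1\}$ is $T_{x'}N\times\{0\}$, so I use Proposition \ref{prop: wlasnosci d}\,c) to bound $d(T_{x'}N\times\{0\},\,T_{y'}M\times\R)\leq d(T_{x'}N\times\{0\},\,T_{y'}M\times\{0\})=d(T_{x'}N,T_{y'}M)$ and then conclude as before.

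The main technical obstacle, such as it is, lies in part a): one must be careful that the Verdier condition for a $C^{q}$ submanifold-with-boundary against its boundary really does hold uniformly near a boundary point. This is handled by choosing, near a point $(y_{0},1)$, a $C^{q}$ chart flattening $M$ near $y_{0}$, so that $M\times(0;1)$ becomes an open piece of a half-space and $M\times\{1\}$ becomes a piece of its bounding hyperplane; invariance under $C^{q}$ diffeomorphisms (valid for $q\geq 2$) together with Corollary \ref{col: d(TxM, TyM)<C|x-y|} applied in the chart then yields the estimate. After assembling parts a), b) and c), Remark \ref{rem: conical property 1} finishes the proof.
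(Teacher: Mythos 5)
Your proposal is correct and follows essentially the same route as the paper: reduce via Remark \ref{rem: conical property 1}, compute $T_{(y,t)}(M\times(0;1))=T_{y}M\times\R$ and $T_{(x,1)}(M\times\{1\})=T_{x}M\times\{0\}$, and conclude with Proposition \ref{prop: wlasnosci d}\,b), c) together with the Verdier hypothesis (cases b, c) and Corollary \ref{col: d(TxM, TyM)<C|x-y|} (case a). Your chart-flattening justification of case a) is a harmless variant; the paper instead applies Corollary \ref{col: d(TxM, TyM)<C|x-y|} to $M$ directly and bounds $d(T_{x'}M\times\{0\},T_{y'}M\times\R)\leq d(T_{x'}M,T_{y'}M)$ via Proposition \ref{prop: wlasnosci d}\,c), with no boundary-manifold discussion needed.
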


\begin{proof} As the Verdier condition is $C^{2}$\ invariant, we may use the equivalent definition of the conical
property from Remark \ref{rem: conical property 1}. Consider two definable $C^{q}$\ ($q\geq 2$)\ submanifolds $M$, $N$\ of $\R^{n}$,
$N\subset\overline{M}\setminus M$. Assume that $\W^{V}(M,N)$.
\vskip 1 mm

a) We would like to prove that $\W^{V}\left(M\times (0;1),M\times\{1\}\right)$\ and $\W^{V}(N\times(0;1),N\times\{1\})$.
 Take a point $x_{0}\in M\times\{1\}$, $x_{0}=(x'_{0},1)$\ with some $x'_{0}\in M$.
 By Corollary \ref{col: d(TxM, TyM)<C|x-y|} we find an open neighbourhood $U_{x'_{0}}$\ of the point $x'_{0}$\ in $\R^{n}$\ and a constant
 $C_{x'_{0}}>0$\ such that
 $$\forall \text{  }x',y'\in U_{x'_{0}}\cap M\quad d\left(T_{x'}M,T_{y'}M\right)\leq C_{x'_{0}}\cdot |x'-y'|.$$
We claim that the following neighbourhood $U_{x_{0}}$\ and a constant $C_{x_{0}}$\  satisfy the thesis:
$$U_{x_{0}}=U_{x'_{0}}\times (0;1+\varepsilon)\qquad \text{and}\qquad C_{x_{0}}=C_{x'_{0}}, $$\
where $\varepsilon>0$. Let
 $x\in U_{x_{0}}\cap (M\times \{1\})$, $y\in U_{x_{0}}\cap (M\times (0;1))$. Then $x=(x',1)$, $y=(y',y_{n+1})$\ with some points
  $x'$, $y'\in U_{x'_{0}}\cap M$\ and $y_{n+1}\in (0;1)$.
 Moreover,
 $$T_{x}(M\times\{1\})=T_{x'}M\times \{0\}\qquad \text{ and }\qquad T_{y}(M\times (0;1))= T_{y'}M\times \R.$$
 Therefore
 $$d\left(T_{x'}M\times \{0\}, T_{y'}M\times \R\right)\overset{Prop. \ref{prop: wlasnosci d} c)}{\leq}
 d\left(T_{x'}M\times\{0\},T_{y'}M\times\{0\} \right)=$$
 $$\qquad \qquad \qquad \qquad \qquad \qquad  \qquad \qquad\qquad \qquad \quad \quad=d\left(T_{x'}M, T_{y'}M\right)\leq C_{x'_{0}}\cdot |x'-y'|\leq C_{x_{0}}\cdot |x-y|.$$
 The proof of $\W^{V}(N\times (0;1), N\times \{1\})$\ is exactly the same.
 \vskip 1 mm

b) Now we prove that $\W^{V}\left(M\times (0;1),N\times (0;1)\right)$.
 Take a point $x_{0}\in N\times(0;1)$, $x_{0}=(x'_{0},{x_{0}}_{n+1})$\ with some $x'_{0}\in N$, ${x_{0}}_{n+1}\in(0,1)$.
 By Corollary \ref{col: d(TxM, TyM)<C|x-y|} we find an open neighbourhood $U_{x'_{0}}$\ of the point $x'_{0}$\ in $\R^{n}$\ and a constant
 $C_{x'_{0}}>0$\ such that
 $$\forall \text{  }x'\in U_{x'_{0}}\cap N,y'\in U_{x'_{0}}\cap M\quad d\left(T_{x'}N,T_{y'}M\right)\leq C_{x'_{0}}\cdot |x'-y'|.$$
We claim that the following neighbourhood $U_{x_{0}}$\ and a constant $C_{x_{0}}$\  satisfy the thesis:
 $$U_{x_{0}}=U_{x'_{0}}\times (0;1)\qquad \text{and}\qquad C_{x_{0}}=C_{x'_{0}}.$$\
Let
 $x\in U_{x_{0}}\cap (N\times (0;1))$, $y\in U_{x_{0}}\cap (M\times (0;1))$.
Then
 $$d\left(T_{x}(N\times (0;1)), T_{y}(M\times (0;1))\right)=d\left(T_{x'}N\times \R, T_{y'}M\times \R\right)=$$
 $$\overset{\text{Prop.}\ref{prop: wlasnosci d}b)}{=} d(T_{x'}N,T_{y'}M)\leq C_{x'_{0}}\cdot |x'-y'|\leq C_{x'_{0}}\cdot |x-y|.$$
\vskip 1 mm

c) Take a point $x_{0}\in N\times\{1\}$, $x_{0}=(x'_{0},1)$\ with some $x'_{0}\in N$.
 By Corollary \ref{col: d(TxM, TyM)<C|x-y|} we find an open neighbourhood $U_{x'_{0}}$\ of the point $x'_{0}$\ in $\R^{n}$\ and a constant
 $C_{x'_{0}}>0$\ such that
 $$\forall \text{  }x'\in U_{x'_{0}}\cap N,y'\in U_{x'_{0}}\cap M\quad d\left(T_{x'}N,T_{y'}M\right)\leq C_{x'_{0}}\cdot |x'-y'|.$$
We claim that the following neighbourhood $U_{x_{0}}$\ and a constant $C_{x_{0}}$\  satisfy the thesis:
 $$U_{x_{0}}=U_{x'_{0}}\times (0;1+\varepsilon)\qquad \text{and}\qquad C_{x_{0}}=C_{x'_{0}},$$\
where $\varepsilon>0$. Let
 $x\in U_{x_{0}}\cap (N\times\{1\})$, $x=(x',1)$\ and $y\in U_{x_{0}}\cap (M\times (0;1))$, $y=(y',y_{n+1})$, where $y'\in M$, $y_{n+1}\in (0,1)$.
Then
 $$d\left(T_{x}(N\times\{1\}), T_{y}(M\times (0;1))\right)=d\left(T_{x'}N\times \{0\}, T_{y'}M\times \R\right)\leq $$
 $$\overset{\text{Prop.}\ref{prop: wlasnosci d}c)}{\leq} d(T_{x'}N\times\{0\},T_{y'}M\times\{0\})= d(T_{x'}N,T_{y'}M)\leq C_{x'_{0}}\cdot |x'-y'|\leq C_{x'_{0}}\cdot |x-y|.$$
\end{proof}

\begin{corollary}
The Verdier belongs to the class $\T$\ with $q\geq 2$.
\end{corollary}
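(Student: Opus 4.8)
The plan is to obtain the conclusion directly from the definition of the class $\T$ together with the two structural facts about the Verdier condition that have already been established in this section. Recall that, by definition, a regularity condition lies in $\T$ exactly when it is a $\WL$ condition of class $C^{q}$ and, moreover, it enjoys the conical property of class $C^{q}$. So the entire argument reduces to verifying these two membership clauses for the Verdier condition in the range $q\geq 2$.

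First I would invoke the theorem asserting that the Verdier condition is a $\WL$ condition of class $C^{q}$ for $q\geq 2$; this already packages the five ingredients of Definition \ref{def: WL condition}, namely definability, genericity, invariance under definable $C^{q}$ diffeomorphisms, the projection property with respect to weakly Lipschitz mappings of class $C^{q}$, and the lifting property with respect to locally Lipschitz mappings of class $C^{q}$. Second, I would quote Theorem \ref{trm: Verdier is conical}, which supplies the remaining ingredient, the conical property of class $C^{q}$ for $q\geq 2$. Putting the two together, the Verdier condition satisfies every clause in the definition of a triangulable condition, and therefore belongs to $\T$, for every admissible $q\geq 2$.

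There is essentially no real obstacle here: the corollary is a bookkeeping consequence of results proved earlier, and the only point requiring a little care is the range of the smoothness parameter — the Verdier condition is formulated for $C^{2}$ submanifolds, so every input demands $q\geq 2$, and the statement is asserted with precisely that restriction. For completeness one may add the remark that, once the Verdier condition is known to lie in $\T$, Theorem \ref{trm: triangulation with Q} applies to it without change, producing a definable, locally Lipschitz $C^{q}$ triangulation that is simultaneously a Verdier stratification compatible with the prescribed subsets; this is exactly the payoff for which the class $\T$ was isolated.
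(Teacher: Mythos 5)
Your proposal is correct and matches the paper's (implicit) argument exactly: the corollary is stated without proof precisely because it follows at once from the theorem that the Verdier condition is a $\WL$ condition of class $C^{q}$ for $q\geq 2$ together with Theorem \ref{trm: Verdier is conical}, which give the two defining clauses of a triangulable condition. Your remark about the smoothness range $q\geq 2$ and the application of Theorem \ref{trm: triangulation with Q} is consistent with the paper's intent.
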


\begin{remark}  The $(r)$\ condition of Kuo does not belong to the class $\T$, since it has not the conical property (see \cite{BT}).
\end{remark}
\vskip 5 mm

\textbf{Acknowledgements.} I am deeply grateful to Wies\l aw Paw\l ucki
for his friendly attention and many fascinating discussions. It is a great honour for me to have such a great teacher.

%%%%%%%%%%%%%%%%%%%%%%%%%%%%%%%%%%%%%%%%%%%%%%%%%%
%%%%%%%%%%%%%%%%%%%%%%%%%%%%%%%%%%%%%%%%%%%%%%%%%%
%References
%%%%%%%%%%%%%%%%%%%%%%%%%%%%%%%%%%%%%%%%%%%%%%%%%%
%\bibliographystyle{amsplain}

\end{document}